\documentclass[10pt]{amsart}
\usepackage{palatino, amssymb, amsfonts, latexsym, mathrsfs}
\usepackage{amssymb, amsmath, amscd}
\input xy
\xyoption{all}

\newtheorem{Theorem}{Theorem}[subsection]
\newtheorem*{Theorem*}{Theorem}
\newtheorem{lemma}{Lemma}[subsection]
\newtheorem{prop}{Proposition}[subsection]

\newtheorem{cor}{Corollary}[subsection]

\theoremstyle{remark}

\newtheorem{remark}{Remark}[subsection]

\newcommand{\PP}{\mathbb{P}}
\newcommand{\E}{\widehat{\mathcal E}}

\newtheorem{theorem}{Theorem}[subsection]
\theoremstyle{definition}
\newtheorem{definition}[theorem]{Definition}
\newtheorem{example}[theorem]{Example}

\newcommand{\gh}{\mathfrak h}
\newcommand{\hreg}{\mathfrak h^{\text{reg}}}
\newcommand{\Hc}{\mathcal H_c}
\newcommand{\KZ}{\text{KZ}}
\newcommand{\OHc}{\mathcal O_{\mathcal H_c}}
\newcommand{\Lieg}{\mathfrak g}
\newcommand{\Hil}{\text{Hilb}^n(\mathbb C^2)}
\newcommand{\tL}{\tilde{\Lambda}}
\newcommand{\GL}{\text{GL}}
\newcommand{\tht}{\theta}
\newcommand{\Ac}{\tilde{\mathscr A}_c} 
\newcommand{\Aci}{\tilde{\mathscr A}_{c,i}}
\newcommand{\To}{\mathring{T}}
\newcommand{\Msn}{\mathcal M^{\text{sn}}}
\newcommand{\Msnt}{\tilde{\mathcal M}^\text{sn}}

\newcommand{\tAc}{\tilde{\mathscr A}_c}

\numberwithin{equation}{section}

\begin{document}

\title{Microlocal KZ functors and rational Cherednik algebras.}

\author{Kevin McGerty}
\address{Mathematical Institute, University of Oxford. }

\date{October 2011.}

\begin{abstract}
Following the work of Kashiwara-Rouquier and Gan-Ginzburg, we define a family of exact functors from category $\mathcal O$ for the rational Cherednik algebra in type $A$ to representations of certain ``coloured braid groups'' and calculate the dimensions of the representations thus obtained from standard modules. To show that our constructions also make sense in a more general context, we also briefly study the case of the rational Cherednik algebra corresponding to complex reflection group $\mathbb Z/l\mathbb Z$. 
\end{abstract}

\maketitle

\section{Introduction}

\subsection{}
This paper is inspired by two beautiful constructions in the theory of rational Cherednik algebras of type $A$. The first is the KZ functor studied in \cite{GGOR} which relates representations in the category $\mathcal O_c$ to representations of Hecke algebras, and the second is the localization theorem of Kashiwara-Rouquier (see also the work of Gan and Ginzburg \cite{GG}) which allows us to realize the category $\mathcal O_c$ as a category of modules for a ``microlocal'' sheaf of rings $\mathcal A_c$ on the Hilbert scheme $\Hil$ of $n$ points in the plane (with certain restrictions on the value of the deformation parameter $c$).

In the context of \cite{KR}, the modules in the category $\mathcal O_c$ correspond to sheaves of $\mathcal A_c$-modules supported on a certain reducible Lagrangian subvariety $Z$ of $\Hil$ first considered by Grojnowski \cite{Gr}. Using this microlocal point of view, we construct for each component of $Z$ an exact functor from $\mathcal O_c$ to a category of local systems, and we show that the $KZ$-functor naturally corresponds to one of them.

On the level of dimensions, these functors recover the characteristic cycle as defined by Gan and Ginzburg \cite{GG}, and for standard modules we are able to calculate these dimensions. This calculation is already known by combining work of Ginzburg, Gordon and Stafford \cite{GGS} with earlier work of Gordon and Stafford \cite{GS}, but our calculations are self-contained, and in particular make no use of Haiman's work on the Hilbert scheme. 

In section \ref{RCAstuff} we recall the definition of the rational Cherednik algebra in type $A$ and the construction of the $KZ$-functor. In section \ref{deformed} we review the quantum Hamiltonian reduction construction of Gan-Ginzburg. In section \ref{Micropi1} we study the Lagrangian subvariety $Z$ of the Hilbert scheme and exhibit its smooth locus. In section \ref{MKZfunctors} we define for each component $Z_\lambda$ of $Z$ an exact functor $KZ_\lambda$ on category $\mathcal O_c$ generalizing the $KZ$-functor. In section \ref{CCcomputation} we compute the characteristic cycles of standard modules, which give the dimensions of our functors $KZ_\lambda$ on these modules. In section \ref{HeckeAlgebras} we show using $\mathcal D$-module techniques that the original $KZ$-functor descends to a functor to representations of a Hecke algebras (as opposed to just a braid group). Finally, to give an example of our techniques for a complex reflection group, in section \ref{cyclic} we consider the case of the complex reflection group $\mathbb Z/l\mathbb Z$. Here we find a direct connection to the theory of cyclotomic $q$-Schur algebras, while hints of a similar connection to the classical $q$-Schur algebra are suggested by our results in the case of the symmetric group. We hope to return to this issue in a later paper. Finally in the Appendix we give a review of material on twisted $\mathcal D$-modules and equivariance, and a brief discussion of the index theorem which is needed in our calculation of characteristic cycles of standard modules.

\noindent
\textit{Acknowledgements}: The author would like to thank Kobi Kremnitzer and Tom Nevins for useful discussions on $\mathcal D$-modules, and Ian Grojnowski, Iain Gordon, and Toby Stafford for introducing him to the world of rational Cherednik algebras along with much encouragement while this paper was being written. The author also gratefully acknowledges the support of a Royal Society University Research Fellowship.

\section{Rational Cherednik algebra and the $\KZ$ functor.}
\label{RCAstuff}
\subsection{}
Let $\gh$ be the permutation representation of the symmetric group $W=S_n$ on $n$ letters. Thus $\gh = \mathbb C^n$ and $W$ acts by permuting the coordinates in the obvious way. The rational Cherednik algebras\footnote{We have set the parameter $t$ of \cite{EG} to be $1$.} $\Hc$ of type $A$, for $c \in \mathbb C$, are a family of algebras giving a flat deformation of $\mathcal D(\gh)\rtimes \mathbb C[W]$, the smash product of the group algebra with the algebra of differential operators on $\mathfrak h$.

Let $\{y_1,y_2,\ldots,y_n\}$ be the standard basis of $\gh$, and let $\{x_1,x_2,\ldots, x_n\}$ be the corresponding dual basis of $\gh^*$. If we let $s_{ij} \in W$ denote the transposition which interchanges $i$ and $j$, then $\Hc$ is generated by $\{x_i,y_i,s_{ij}: 1\leq i,j \leq n, i \neq  j\}$ subject to relations:

\[
\begin{split}
s_{ij}x_i &= x_js_{ij}, \quad s_{ij}y_i = y_js_{ij}, \quad 1 \leq i,j, \leq n, i \neq j, \\
[y_i,x_j] &= c s_{ij}, \quad [x_i,x_j] = [y_i,y_j] =0, \quad \forall 1 \leq i,j \leq n, i \neq j\\
[y_k, x_k] &= 1 -c\sum_{i \neq k} s_{ik}.
\end{split}
\]
Clearly $\mathcal H_0$ is just $\mathcal D(\gh) \rtimes \mathbb C[W]$. In general, one may filter $\Hc$ by placing $\gh$ in degree $1$, $\gh^*$ and $\mathbb C[W]$ in degree $0$. The associated graded algebra is then isomorphic to $\mathbb C[\gh\times \gh^*]\rtimes \mathbb C[W]$, and the canonical map from $\Hc$ is known to be a vector space isomorphism \cite[Theorem 1.3]{EG}. This yields a triangular decomposition for $\Hc$: multiplication in $\Hc$ induces a vector space isomorphism
\[
\mathbb C[\gh^*] \otimes \mathbb C[W] \otimes \mathbb C[\gh] \to \Hc.
\]

Motivated by the analogy between this decomposition and the well-known triangular decomposition of a semisimple Lie algebra, one can consider the category $\OHc$ of representations for $\mathcal H_c$ which are locally finite for the subalgebra $\mathbb C[\gh^*]$. This category was introduced in \cite{DO} and has been extensively studied (see for example \cite{BEG}, \cite{GGOR}). A module $M$ in category $\OHc$ is said to have \textit{type} $\bar{\lambda}$ for $\bar{\lambda} \in \gh/W = \text{Spec}(\mathbb C[\gh^*]^W)$ if for any element $P \in \mathbb C[\gh]^W$ the operator $P - P(\bar{\lambda})$ is locally nilpotent. The full subcategory of $\OHc$ consisting of objects of type $\bar{\lambda}$ is denoted $\OHc(\bar{\lambda})$ and $\OHc$ splits as a direct sum of the subcategories $\OHc(\bar{\lambda})$. We shall focus on the category $\OHc(0)$, where $\mathbb C[\gh]^W$ acts locally nilpotently, and thus for convenience we shall denote it by $\mathcal O_c$. It is abelian and every object has finite length. Note that the results of Bezrukavnikov and Etingof on induction/restriction functors allow one to reduce the study of $\OHc(\bar{\lambda})$ to $\mathcal O_{\mathcal H'_c}(0)$ for $\mathcal H'_c$ a rational Cherednik algebra corresponding to a parabolic subgroup of $W$, so that questions about $\OHc$ can be reduced to ones for $\mathcal O_{\mathcal H_c}(0)$ (see \cite[Corollary 3.3]{BE} for a precise statement). 

\subsection{}
In order to relate $\Hc$ to the Hibert scheme, we will also need its spherical subalgebra, $e\Hc e$, where $e = |W|^{-1}\sum_{w \in W} w$ is the idempotent in $\mathbb C[W]$ corresponding to the trivial representation. The map $a \mapsto ae$ yields embeddings from $\mathbb C[\gh]^W \to \mathbb C[\gh]^We \hookrightarrow e\Hc e$ and from $\mathbb C[\gh^*]^W \to \mathbb C[\gh^*]^We \hookrightarrow e\Hc e$. Moreover, if $e\mathcal H_ce$ is simple, then their images generate $e\Hc e$ \cite[Theorem 4.6]{BEG}. It is easy to see that $e \Hc e$ is simple whenever $\Hc$ is simple\footnote{However it is also shown in \cite[Theorem 4.10]{BE} that $e\Hc e$ is simple for all $c \in (-1,0)$, a range in which $\Hc$ is not always simple.}, and by \cite[Theorem 3.1]{BEG} the algebra $\Hc$ is simple algebra exactly when the associated finite Hecke algebra $H_W(q)$ is semisimple, where $q = e^{2\pi i c}$. One has the obvious functor from $\mathcal H_c$-modules to $e\mathcal H_c e$-modules given by $M \mapsto eM$. This is easily seen \cite[Lemma 4.1]{BEG} to be a Morita equivalence when $\mathcal H_c$ is simple.

\subsection{}
We now recall the construction of the $\KZ$-functor of \cite{GGOR}. Let
\[
\hreg = \{v \in \gh: |W\cdot v| = |W|\},
\]
be the subset of $\gh$ consisting of those points whose coordinates are pairwise distinct. If we set $\delta = \prod_{i<j}(x_i-x_j) \in \mathbb C[\gh]$ then $\hreg = \{ v \in \gh: \delta(v) \neq 0\}$, hence it is a Zariski-open subset of $\gh$. The key to the construction is the Dunkl homomorphism from $\Hc$ to $\mathcal D(\hreg)\rtimes \mathbb C[W]$. This is given by the obvious embedding of the subalgebra $\mathbb C[\mathfrak h]\rtimes \mathbb C[W]$, and by sending $y \in \gh$ to the operator
\[
T_y = \partial_y + c\sum_{i<j} \frac{y_i-y_j}{x_i-x_j}(s_{ij}-1) \in \mathcal D(\hreg)\rtimes \mathbb C[W].
\]
It is straight-forward to check that $S = \{\delta^k : k \in \mathbb Z_{>0}\}$ is an Ore set in $\mathcal H_c$, so that it makes sense to localize $\mathcal H_c$ at $S$. We refer to this as localizing $\mathcal H_c$ to $\mathfrak h^{\text{reg}}$ and denote the resulting algebra by $\mathcal H_{c|\mathfrak h^{\text{reg}}}$. Similarly we may localize an $\mathcal H_c$-module at $S$ to obtain an  $\mathcal H_{c|\mathfrak h^{\text{reg}}}$-module denoted $M_{|\mathfrak h^{\text{reg}}}$. We then have the following result:

\begin{prop}\cite[\S 4]{EG}
The assignment $w \mapsto w$, $x \mapsto x$ and $y \mapsto T_y$ extends to an algebra homomorphism $\Theta_c$ which embeds $\Hc$ into $\mathcal D(\hreg)\rtimes \mathbb C[W]$. Moreover, this map becomes an isomorphism after localizing to $\hreg$.
\end{prop}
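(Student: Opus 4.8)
The plan is to verify the defining relations of $\Hc$ are preserved under the assignment $\Theta_c$, and then check that the localized map is an isomorphism by exhibiting an inverse, or equivalently by a dimension/filtration argument. First I would observe that the relations involving only $x_i$ and $W$ are automatic, since $\mathbb C[\gh]\rtimes\mathbb C[W]$ embeds tautologically and its internal relations are respected. The substantive computations are: (i) the commutator $[T_{y_i},T_{y_j}]=0$ for $i\neq j$; (ii) the ``$s$-twist'' relations $s_{ij}T_{y_i}=T_{y_j}s_{ij}$; and (iii) the mixed relations $[T_{y_i},x_j]=c\,s_{ij}$ for $i\neq j$ together with $[T_{y_k},x_k]=1-c\sum_{i\neq k}s_{ik}$. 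For (iii) one expands $[T_{y_i},x_j]=[\partial_{y_i},x_j]+c\sum_{p<q}\bigl[\tfrac{(y_i)_p-(y_i)_q}{x_p-x_q}(s_{pq}-1),\,x_j\bigr]$; the first term is the Kronecker $\delta_{ij}$, and in the second term only the operator part of $s_{pq}-1$ fails to commute with $x_j$, contributing $\tfrac{(y_i)_p-(y_i)_q}{x_p-x_q}(x_q-x_p)s_{pq}$ when $\{p,q\}=\{i,j\}$-type indices meet $j$; collecting these recovers exactly $c\,s_{ij}$ (resp.\ the diagonal relation). Relation (ii) is a direct and short check using $s_{ij}\partial_{y_i}s_{ij}=\partial_{y_j}$ and the $W$-equivariance of the rational sum.

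The genuinely delicate step is (i), the commutativity $[T_{y_i},T_{y_j}]=0$. Here one must expand the bracket of two Dunkl operators and check that the potentially singular terms involving products $\tfrac{1}{x_p-x_q}\cdot\tfrac{1}{x_r-x_s}$ and the ``triple'' terms from $s_{pq}s_{rs}$ with overlapping indices cancel. The standard device is the partial fraction identity
\[
\frac{1}{(x_i-x_j)(x_j-x_k)}+\frac{1}{(x_j-x_k)(x_k-x_i)}+\frac{1}{(x_k-x_i)(x_i-x_j)}=0,
\]
which kills the terms supported on the codimension-two strata where three coordinates collide; the remaining terms pair off by the braid-type relations among the $s_{ij}$. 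I expect this to be the main obstacle, though it is a well-documented computation (it is essentially the integrability of the Dunkl connection).

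Once $\Theta_c$ is established as an algebra map, injectivity follows from the triangular decomposition: $\Theta_c$ is compatible with the filtration placing $\gh$ in degree $1$, and on the associated graded it becomes the obvious map $\C[\gh\times\gh^*]\rtimes\C[W]\to\C[\gh^{\text{reg}}\times\gh^*]\rtimes\C[W]$ (the symbol of $T_{y}$ is $\partial_y$, since the rational correction has lower order), which is injective; hence $\Theta_c$ is injective. For the final assertion, after inverting $\delta$ the image contains $\C[\hreg]$, $\C[W]$, and all the $T_{y_i}$; since $\partial_{y_i}=T_{y_i}-c\sum_{p<q}\tfrac{(y_i)_p-(y_i)_q}{x_p-x_q}(s_{pq}-1)$ and the correction term already lies in $\C[\hreg]\rtimes\C[W]\subseteq\Theta_c(\mathcal H_{c|\hreg})$, the operators $\partial_{y_i}$ are in the image, so the image is all of $\mathcal D(\hreg)\rtimes\C[W]$. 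Combined with injectivity this gives the isomorphism after localization. I would organize the writeup so that (iii) and (ii) are dispatched in a few lines and the bulk of the argument cites or reproduces the partial-fraction cancellation for (i), referring to \cite[\S 4]{EG} for the original treatment.
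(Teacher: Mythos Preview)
Your proposal is correct and follows the standard route for establishing the Dunkl embedding. Note, however, that the paper does not actually supply a proof of this proposition: it is quoted from \cite[\S 4]{EG}, and the only in-text comment is that ``the final part of the proposition is immediately clear from the first part and the explicit formula for $T_y$.'' Your argument for that final part---observing that after inverting $\delta$ the rational correction terms lie in $\mathbb C[\hreg]\rtimes\mathbb C[W]$, so each $\partial_{y_i}$ is in the image---is precisely what that remark is pointing to. The verification of relations and the filtration argument for injectivity that you outline are exactly what one finds in \cite{EG}; the paper simply defers to that reference rather than reproducing the computation.
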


Note that the final part of the proposition is immediately clear from the first part and the explicit formula for $T_y$.  Now given any module $M$ in the category $\mathcal O_{\mathcal H_c}$, via the above isomorphism, we may view $M_{|\mathfrak h^{\text{reg}}}$ as a module for $\mathcal D(\hreg)\rtimes \mathbb C[W]$-module, and hence, since $\hreg$ is an affine variety, as a $W$-equivariant module for the sheaf of differential operators $\mathcal D_{\hreg}$ on $\hreg$. Now it is not hard to show that $M$ is finitely generated as a $\mathbb C[\gh]$-module (see for example Lemma $2.5(i)$ of \cite{BEG}), thus $M_{|\hreg}$ is coherent as an $\mathcal O_{\hreg}$-module. It follows that the associated $\mathfrak D_{\hreg}$-module must be a vector bundle with a flat connection, and hence it gives rise to a local system on $\hreg$. Since this local system is $W$-equivariant, it descends to give a local system on $\hreg/W$. But now if $x_0 \in \hreg$ is any point, it is known that $\pi_1(\hreg/W, x_0)$ is isomorphic to the braid group $\mathcal B_W$, and hence we get a functor
\[
\KZ \colon \OHc \to \text{Rep}(\mathcal B_W),
\]
by for example applying the deRham functor to $M_{\hreg}$ (alternatively one could take the dual representation given by the solution complex). One of the main results of \cite{GGOR} shows that the image of this functor lies in the much smaller category of representations of the associated Hecke algebra.

\begin{Theorem}(\cite[Theorem 5.13]{GGOR})
The functor $\KZ$ factors through the category of representations of the Hecke algebra $\mathcal H_q$ where the parameter $q$ is specialized to $e^{2\pi i c}$.
\end{Theorem}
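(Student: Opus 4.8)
The plan is to reduce the statement to a local computation near the discriminant divisor, showing that the monodromy of the local system $\KZ(M)$ around each reflection hyperplane satisfies a quadratic (Hecke) relation. The braid group $\mathcal{B}_W$ is generated by loops $T_i$ encircling the components of the discriminant in $\hreg/W$; since $W = S_n$ is generated by simple reflections whose hyperplanes all lie in a single $W$-orbit, it suffices to check that for any $M \in \mathcal{O}_c$ the operator by which $T_i$ acts on the associated local system satisfies $(T_i - 1)(T_i + q) = 0$ with $q = e^{2\pi i c}$. Equivalently, by rigidity of local systems and the fact that $\OHc$ has enough projectives (indeed every object has finite length and the standard modules $\Delta(\tau)$, $\tau \in \mathrm{Irr}(W)$, generate), it is enough to verify the relation on standard modules, and moreover it suffices to produce \emph{one} $\KZ$-stable abelian subcategory on which $T_i$ visibly has only two eigenvalues $1, -q$, since the relation is then inherited by all subquotients and extensions.

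\emph{First} I would restrict attention to a generic point of a component of the discriminant: choosing coordinates so that the relevant hyperplane is $x_1 = x_2$, the transverse slice is governed by the rank-one rational Cherednik algebra for $W' = S_2 = \langle s_{12}\rangle$ acting on $\mathbb{C}$, by a standard ``restriction to a parabolic'' argument (this is exactly the kind of reduction alluded to after the definition of $\OHc(\bar\lambda)$, via \cite{BE}). \emph{Next}, for the rank-one algebra the Dunkl operator is $T = \partial_x + \tfrac{c}{x}(s-1)$, and on a module in category $\mathcal{O}$ the localized connection $\nabla = d - A\,\tfrac{dx}{x}$ has residue $A$ whose eigenvalues one computes explicitly from the $\mathbb{C}[y]$-action and the relation $[y,x] = 1 - cs$: on the two isotypic components (trivial and sign for $s$) the Euler element $h = xy + \tfrac12(1 - cs) + \ldots$ acts with eigenvalues differing by the parameter, so the residue eigenvalues on the regular locus are (up to an overall integer shift that does not affect the conjugacy class of the monodromy) $0$ and $-c$. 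Hence the monodromy of $T_i$ has eigenvalues $1$ and $e^{-2\pi i c}$, i.e. $1$ and $q^{-1}$ (or $1, -q$ after the usual normalization of the Hecke generator), giving the quadratic relation.

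\emph{The final step} is to assemble these local monodromy relations into a map from the Hecke algebra $\mathcal{H}_q$: the braid relations are automatic since $\KZ(M)$ is genuinely a $\mathcal{B}_W$-representation, and the quadratic relations just proved show the $\mathcal{B}_W$-action factors through the quotient $\mathcal{H}_q = \mathbb{C}\mathcal{B}_W/\langle (T_i-1)(T_i+q)\rangle$; functoriality is clear because the whole construction $M \mapsto M_{|\hreg} \mapsto \mathrm{DR}$ is functorial and the relation is a closed condition preserved under the exact operations defining $\OHc$.

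\emph{The main obstacle} is the eigenvalue computation for the residue of the Dunkl connection: one must show not merely that there are (at most) two eigenvalues but that they are exactly $0$ and $-c$ modulo $\ZZ$ \emph{uniformly} for every object of $\mathcal{O}_c$ — including non-semisimple ones where the residue need not be semisimple. This requires controlling the possible Jordan blocks, i.e. ruling out a block mixing eigenvalues $\lambda$ and $\lambda + m$ for integers $m \neq 0$; the resolution is that such a block would force a non-trivial extension in category $\mathcal{O}$ between modules whose $c$-functions (lowest $\mathbb{C}[\gh^*]$-weights) are incompatible, contradicting the grading/highest-weight structure of $\OHc$. Making this last point precise — essentially that the KZ functor is compatible with the natural $\ZZ$-grading and that ``integrality obstructions'' to splitting the residue are the only ones — is the real content, and is where \cite[Theorem 5.13]{GGOR} does its work.
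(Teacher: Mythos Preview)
Your reduction-to-rank-one strategy matches the paper's, but the execution has a genuine gap precisely at the point you yourself flag as the ``main obstacle.'' Knowing that the residue of the Dunkl connection has eigenvalues $0$ and $-c$ modulo $\mathbb Z$ does \emph{not} give the quadratic relation $(T-1)(T+q)=0$ unless you also control the Jordan structure, and your claim that the relation is ``inherited by \ldots\ extensions'' is false: if $T$ acts as a size-two unipotent Jordan block on an extension of two modules on which $T=1$, then $(T-1)(T+q)\neq 0$ whenever $q\neq -1$. The sketch you offer for ruling out such blocks (incompatible $c$-functions in $\mathcal O_c$) is not made precise, and your final sentence defers to the very theorem you are meant to prove, which is circular. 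The original proof in \cite{GGOR} closes the gap differently --- by checking the relation on standard modules and then using a flat-family/deformation argument in $c$ rather than an extension argument --- and your proposal does not reproduce that step.

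The paper's proof avoids the Jordan-block problem altogether, and this is the idea you are missing. Working in the $\mathcal D$-module picture on $\mathfrak G^\circ$ (equivalently on $\mathbb P^1$ after reducing to $n=2$), the module is $(\mathbb G_m,c)$-twisted equivariant, which forces the monodromy on the \emph{nearby} cycles $\Psi$ to be the scalar $-q$, not merely to have eigenvalue $-q$. The canonical and variation maps $c\colon\Psi\to\Phi$, $v\colon\Phi\to\Psi$ satisfy $cv=T-1$ on $\Phi$ and $vc=T-1$ on $\Psi$; since $vc=-q-1$ is now a scalar, one computes on the vanishing cycles
\[
(T-1)^2=(cv)^2=c(vc)v=-(q+1)\,cv=-(q+1)(T-1),
\]
which is exactly $(T-1)(T+q)=0$. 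No separate analysis of semisimplicity or of the extension structure in $\mathcal O_c$ is needed. The passage from $n=2$ to general $n$ is also carried out differently: rather than invoking the Bezrukavnikov--Etingof parabolic restriction, the paper restricts along an explicit transversal slice $\mathfrak g_i\times V^\circ\hookrightarrow\mathfrak G^\circ$ that is noncharacteristic for $\Lambda$, so that pullback preserves holonomicity and the generator $T_i$ becomes the monodromy in the $n=2$ problem.
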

\noindent
We will give a new proof of this theorem in Section \ref{MKZfunctors} in the $\mathcal D$-module context of \cite{GG}.

\section{The deformed Harish-Chandra homomorphism and the Hilbert scheme.}
\label{deformed}

\subsection{}
\label{almostcommuting}
In this section we recall the localization of the rational Cherednik algebra in type $A$, as studied in \cite{KR} and the related work \cite{GG}. Let $V$ be an $n$-dimensional vector space and let $\Lieg = \mathfrak{gl}(V)$ be the Lie algebra of linear maps from $V$ to itself. The almost commuting variety is the variety of quadruples
\[
\mathcal M = \{(X,Y,i,j) \in \Lieg \times \Lieg \times V \times V^*: [X,Y]+ji = 0\},
\]
where $ji$ denotes the rank one linear map $v \mapsto j(v).i$. Using the trace form we may naturally identify $\Lieg$ with $\Lieg^*$, so that $\mathcal M$ may be viewed as a subvariety of $T^*\mathfrak G$ where we set $\mathfrak G = \Lieg\times V$. It is known \cite[Theorem 1.1]{GG} that $\mathcal M$ is an equidimensional variety with $n+1$ components of dimension $n^2 +2n$. We also consider the set
\[
\mathcal M_{\text{nil}} = \{(X,Y,i,j) \in \mathcal M: Y \text{ is nilpotent}\}.
\] 
This is a (reducible) Lagrangian subvariety. Finally we consider the Zariski open set (given by a stability condition)
\[
\text{U} = \{(X,Y,i,j) \in T^*\mathfrak G: \mathbb C\langle X,Y\rangle i = V\}
\]
where $\mathbb C\langle X,Y\rangle$ is the subalgebra of $\text{End}(V)$ generated by $X$ and $Y$. If we let $\mathcal M^s = \mathcal M \cap \text{U}$, then it can be shown that $\text{GL}(V)$ acts freely on $\mathcal M^s$, and the quotient is isomorphic to $\text{Hilb}^n(\mathbb C^2)$, the Hilbert scheme of $n$ points in the plane. At the level of sets the isomorphism can be seen as follows: if $(X,Y,i,j) \in \mathcal M^s$ then the stability condition in fact forces $j$ to be zero, so that $X$ and $Y$ commute. Thus they define a homomorphism from $\mathbb C[x,y] \to \text{End}(V)$, and the kernel of this homomorphism is a codimension $n$ ideal of $\mathbb C[x,y]$. Note that this realization of $\text{Hilb}^n(\mathbb C^2)$ equips it with a natural ample line bundle corresponding to the $G$-equivariant line bundle $\mathcal M^s \times \text{det}$ on $\mathcal M^s$. Let $\Msn$ denotes the intersection of $\mathcal M^s$ with $\mathcal M_{\text{nil}}$. It is $G$-stable its quotient is a Lagrangian subvariety of $\Hil$ which we denote by $Z$.

\subsection{}
\label{DmodsonG}
We now relate the rational Cherednik algebra to twisted $\mathcal D$-modules. For the convenience of the reader, and to fix notation, in Appendix \ref{twistingstuff} we review a construction of the kinds of twisted rings of differential operators we need, along with notions of equivariance. Let $V^\circ = V\backslash \{0\}$, let $\mathfrak G^\circ = \Lieg \times V^\circ$, and let $\mathfrak X = \Lieg\times \mathbb P$ where $\mathbb P = \mathbb P(V)$ is the projective space of lines in $V$. Clearly $\mathfrak G^\circ$ is a principal $\mathbb G_m$-bundle over $\mathfrak X$, so that given any $\lambda \in \text{Hom}(\text{Lie}(\mathbb G_m),\mathbb C)$, we may consider modules for the corresponding sheaf $\mathcal D_{\mathfrak X,\lambda}$ of $\lambda$-twisted differential operators on $\mathfrak X$, or equivalently $(\mathbb G_m,\lambda)$-twisted equivariant $\mathfrak D_{\mathfrak G^0}$-modules. Note that we may describe the cotangent bundle of $\mathfrak X$ as 
\[
\{(X,Y,i,j) \in T^*\mathfrak G^\circ: \langle i,j\rangle =0\}/\mathbb C^\times, 
\]
where $\mathbb C^\times$ acts via $t\cdot(X,Y,i,j) = (X,Y,ti,t^{-1}j)$ and as above we have identified $T^*\mathfrak G^\circ$ with $\Lieg\times\Lieg\times V^\circ\times V^*$. The function $(X,Y,i,j) \mapsto [X,Y]+ij$ is fixed by this $\mathbb C^\times$-action, and moreover if $(X,Y,i,j) \in \mathcal M$ then
\[
\langle i,j\rangle = \text{Tr}(ji) = - \text{Tr}([X,Y]) = 0,
\]
thus 
\[
\mathcal M \cap \{(X,Y,i,j) \in T^*\mathfrak G^\circ :\langle i,j \rangle = 0\} = \mathcal M \cap T^*\mathfrak G^\circ.
\]
Let $\mathfrak M$ denote the quotient of $\mathcal M \cap T^*\mathfrak G^\circ$ by $\mathbb C^\times$, a subvariety of $T^*\mathfrak X$, and similarly let $\Lambda$ be the quotient of $\mathcal M_\text{nil}\cap T^*\mathfrak G^\circ$ (a Lagrangian subvariety of $T^*\mathfrak X$). If $(X,Y,i,j) \in \mathcal M^s$ then $i \neq 0$ and $\langle i,j\rangle =0$ (since $j=0$), so that $\mathcal M \cap T^*\mathfrak G^\circ$ contains $\mathcal M^s$ and $\Msn$. We denote their quotients in $T^*\mathfrak X$ by $\mathfrak M^s$ and $\Lambda^s$.  
 
The action of $\text{SL}(V)$ on $\mathfrak G^\circ$ commutes with the $\mathbb G_m$-action, so that we may also consider the category of $\text{SL}(V)$-equivariant $\mathcal D_{\mathfrak X,\lambda}$-modules. Now if we write $\mathcal Z\cong \mathbb G_m$ for the centre of $\text{GL}(V)$, then the action of $\mathbb G_m$ on $\mathfrak G^\circ$ is clearly the restriction to $\mathcal Z$ of the action of $\text{GL}(V)$ on $\mathfrak G^\circ$. We may thus also consider the category of $(\text{GL}(V),c.\text{tr})$-equivariant $\mathcal D$-modules on $\mathfrak G^\circ$ where as usual $\text{tr}$ denotes the trace character on $\Lieg$. Note that given $c \in \mathbb C$, the corresponding $\lambda$ above for $\text{Lie}(\mathbb G_m)=\text{Lie}(\mathcal Z)$ is given by $\lambda = c.\text{tr}_{|\text{Lie}(\mathcal Z)}$, and following \cite{GGS} we will from now on write $\mathcal D_{\mathfrak X,c}$ for $\mathcal D_{\mathfrak X,\lambda}$. Now since $\text{tr}$ vanishes on $\mathfrak{sl}(V)$ and $\text{GL}(V) = \mathcal Z.\text{SL}(V)$, this category is equivalent to the category of $\text{SL}(V)$-equivariant $\mathcal D_{\mathfrak X,c}$-modules on $\mathfrak X$ and moreover this equivalence respects holonomic modules. Note that the category of $(\text{GL}(V),c.\text{tr})$-equivariant modules is considered in \cite{KR}, while the category of $\text{SL}(V)$-equivariant $\mathcal D_{\mathfrak X,c}$-modules is used in \cite{GG}. For more details concerning this equivalence, at least at the level of algebras of global sections, see \cite[\S 6]{GGS}. 

We also remark that if we write $\mathcal D_c(\mathfrak X)$ for the global sections of $\mathcal D_{\mathfrak X,c}$, then provided $n>3$ or $n=2$ and $2c\notin \mathbb Z_{\leq 0}$, it is shown in \cite[\S6.2]{GGS} that 
\[
\mathcal D_c(\mathfrak X) \cong \big(\mathcal D(\mathfrak G)/(\mathcal D(\mathfrak G)(\nu-c.\text{tr})(\mathfrak z))\big)^{\mathcal Z}
\]
where $\nu \colon \mathfrak{gl}(V) \to \Theta_{\Lieg \times V}$ is the quantized moment map for the $\text{GL}(V)$-action, and $\mathfrak z$ is the Lie algebra of $\mathcal Z$. Thus one may effectively work on $\mathfrak G$ or $\mathfrak G^\circ$.

\subsection{}
\label{deformedHC}
The action of $\text{SL}(V)$ on $\mathfrak X$ induces an infinitesimal action of $\mathfrak{sl}_n$, and hence a homomorphism from $\mathfrak{sl}_n$ to $\Theta_{\mathfrak X}$  the Lie algebra of vector fields on $\mathfrak X$, which extends naturally to a map $\tau \colon \mathcal U(\Lieg) \to \mathcal D_\mathfrak X$.  Similarly, we obtain a homomorphism $\tau_c\colon \mathcal U(\Lieg) \to \mathcal D_{\mathfrak X,c}$ from the enveloping algebra to the sheaf of $c$-twisted differential operators. Let $\mathfrak g_c$ denote the image of $\mathfrak{sl}(V)$ under $\tau_c$.  Since projective space is known to be $\mathcal D$-affine for $c \notin \{-k/n: k \in \mathbb Z_{>0}\}$, for these values the category of $\mathcal D_{\mathfrak X,c}$-modules is equivalent to the category of modules for its algebra of global sections $\mathcal D_c(\mathfrak X)$.

One of the main results of \cite{GG} is the construction of a ``deformed Harish-Chandra homomorphism'' 
\[
\Phi_c \colon (\mathcal D_c(\mathfrak X)/\mathcal D_c(\mathfrak X)\cdot \Lieg_c)^{\text{ad}(\Lieg_c)} \to e\mathcal H_c e.
\]
The map $\Phi_c$ is a filtered isomorphism sending $\mathbb C[\Lieg]^{\text{Ad} G}$ to $\mathbb C[\mathfrak h]^{S_n}$ and $\mathfrak Z$, the algebra of constant coefficient differential operators on $\Lieg$, to $\text{Sym}(\mathfrak h)^{S_n}$ (see Theorem $1.5$ of \cite{GG}).

Since we will later examine the connection between the original $KZ$-functor and our microlocal functors, we review briefly the construction of the map $\Phi_c$. It consists of two steps: first there is the Dunkl embedding which we have already discussed. This gives a map $\Theta_c \colon \mathcal H_c \to \mathcal D(\mathfrak h^{\text{reg}})\rtimes \mathbb C[W]$. On the other hand one can construct a ``radial parts'' homomorphism 
\[
\mathfrak R_c\colon (\mathcal D_c(\mathfrak X)/\mathcal D_c(\mathfrak X)\cdot \Lieg_c)^{\text{ad}(\Lieg_c)} \to \mathcal D(\mathfrak h^{\text{reg}})^W.
\]
To construct this map let $\mathfrak X^{\text{reg}}$ be the open subset of $\mathfrak X$ consisting of pairs $(X,\ell)$ where the nonzero vectors in $\ell$ are cyclic for $X$ and let $\mathfrak G^\text{reg}$ denote its preimage in $\mathfrak G^\circ$. If we pick a holomorphic volume form $\Omega \in \bigwedge^n(V^*)$ then the function 
\[
s\colon \mathfrak G \to \mathbb C, \quad (X,v) \mapsto \langle v\wedge Xv\wedge \ldots \wedge X^{n-1}v,\Omega \rangle,
\]
shows that $\mathfrak G^\text{reg}$ is a principal affine open of $\mathfrak G$, and hence $\mathfrak X^{\text{reg}}$ is the complement of a principal divisor in $\mathfrak X$. It is easy to check that the action of $\text{PGL}(V)$ on $\mathfrak X^{\text{reg}}$ is free (see \cite[\S 5.3]{BFG} or Proposition \ref{smoothlocus} below), making it into a principal $\text{PGL}(V)$-bundle over $\mathfrak h/W$. Thus by descent we see that 
\[
(\mathcal D_c(\mathfrak X^{\text{reg}})/\mathcal D_c(\mathfrak X^{\text{reg}})\cdot \Lieg_c))^{\text{PGL}(V)} \cong \mathcal D(\mathfrak h/W), \quad (c \in \mathbb C).
\]
where the isomorphism is given explicitly by 
\[
\mathfrak R_c(D) = s^c(D_{|\mathcal O(\mathfrak X^{\text{reg}},c)})s^{-c}, \quad D \in \mathcal D(\mathfrak X)^G.
\]
Now we have inclusions $\mathcal D(\mathfrak h)^W \subset \mathcal D(\mathfrak h/W) \subset \mathcal D(\mathfrak h^{\text{reg}})^W$, and it can be shown that the image in $\mathcal D(\mathfrak h^\text{reg})^W$ of the composite of  $\mathfrak R_c$ with this inclusion is precisely the image of the spherical subalgebra of $\mathcal H_{c-1}$ under the Dunkl homorphism. A careful discussion of this fact is given in the Appendix to \cite{GGS}. However the map $\Psi_c$ which we work with is, as in \cite[\S 5.4]{BFG}, a twist of this radial parts map by the discriminant $\delta$: we set
\[
\Psi_c(D) = m_\delta^{-1}\circ \mathfrak R_c(D)\circ m_\delta,
\]
where $m_f$ denotes the operator of multiplication by the function $f$. The image $\text{im}(\Psi_c)$ of this twisted radial parts map is exactly the image of $e\mathcal H_c e$ under the Dunkl homomorphism, and the composition of $\Psi_c$ with the inverse of the Dunkl map yields the required isomorphism $\Phi_c$. Note that when $c=0$ this then agrees  with the classical Harish-Chandra homomorphism \cite[\S 7]{EG}.

\begin{remark}
A similar result is obtained in \cite[Lemma 4.7]{KR}.
\end{remark}

\section{On the smooth locus of $\Lambda^s$}
\label{Micropi1}
\subsection{}
\label{GrojZ}
In this section we study the Lagrangian variety $Z$ in $\text{Hilb}_n(\mathbb C^2)$ introduced in $\S$\ref{almostcommuting}. Let us first give another description of $Z$, following \cite{Gr}. Let $\pi\colon \Hil \to S^n(\mathbb C^2)$ be the Hilbert-Chow morphism. It is known that $\pi$ is a resolution of singularities. Let $\Sigma = \mathbb C$, and view $\mathbb C^2$ as $T^*\Sigma$, a local model for a curve inside a surface. Since $\text{Hilb}^n(\Sigma)$ is isomorphic to the $n$-th symmetric product of $\Sigma$, we may view $\text{Hilb}^n(\Sigma)$ as either a subvariety of $\Hil$ or $S^n(\mathbb C^2)$, compatibly with the morphism $\pi$. The variety $S^n(\Sigma) = \mathbb C^n/S_n$ is stratified in an obvious way:

\[
S^n(\Sigma) = \bigsqcup_{\lambda \vdash n} S^n_\lambda(\Sigma),
\]
where the stratum $S^n_\lambda(\Sigma)$ contains those points of $S^n(\Sigma)$ whose multiplicities are given by $\lambda$, a partition of $n$. If we set $Z_\lambda^0 = \pi^{-1}(S_\lambda^n(\Sigma))$, then 
\[
Z = \bigsqcup_{\lambda \vdash n} Z_{\lambda}^0,
\]
is a closed Lagrangian subvariety of $\Hil$, with components $Z_\lambda = \overline{Z_{\lambda}^0}$. In terms of the analogy with category $\mathcal O$ for semisimple Lie algebras, $Z$ corresponds to the union of the conormal varieties to the $B$-orbits on the flag variety. 

$\text{Hilb}^n(T^*\Sigma)$ inherits a natural $\mathbb C^\times$-action from the $\mathbb C^\times$-action on the fibres of $T^*\Sigma$, and the components of $Z$ may also be described in terms of this action. Indeed each component of the fixed point set can naturally be identified with $S^n_\lambda(\Sigma)$ for a partition $\lambda$ of $n$, and if $\text{Fix}_\lambda$ denotes this component, we have
\[
\overline{Z_\lambda^0} = \overline{\{x \in \Hil: \lim_{t \to 0} t\cdot x \in \text{Fix}_\lambda\}},
\]
(here the right-hand side asserts both that the limit exists, and that it lies in $\text{Fix}_\lambda$). For more details see \cite[\S 7.2]{Na}

\subsection{}
\label{decompositionofM}
We now relate this to the almost commuting variety and the construction of $\Hil$ given in the previous section. The variety $\mathcal M_\text{nil}$ is described in \cite{GG} explicitly in terms of a stratification of $\mathfrak X$ which mimics Lusztig's stratification of a reductive group or Lie algebra. The strata are labelled by conjugacy classes of pairs $(\mathfrak l, \Omega)$ where $\mathfrak l$ is a Levi subalgebra and $\Omega$ is an orbit of $L$ (the associated Levi subgroup) on $\mathcal N_\mathfrak l \times V$ where $\mathcal N_{\mathfrak l}$ is the nilpotent cone of $\mathfrak l$ (there are only finitely many such orbits). A stratum $S$ is said to be \textit{relevant} if for some (and hence any) $(X,\ell) \in S$, the element $X$ is regular, and the subspace $\mathbb C[X]\ell$ has a $\mathbb C[X]$-invariant complement. Then we have the following result.

\begin{Theorem}\cite[\S 4.3]{GG}
The variety $\Lambda$ is given by
\[
\Lambda = \bigsqcup_{S \text{ relevant}} \overline{T^*_S(\mathfrak X)}.
\]
\end{Theorem}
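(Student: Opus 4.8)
The plan is to identify $\Lambda$, the quotient by $\mathbb C^\times$ of $\mathcal M_{\text{nil}} \cap T^*\mathfrak G^\circ$, with a union of closures of conormal bundles $\overline{T^*_S(\mathfrak X)}$ indexed by the relevant strata $S$ of the Lusztig-type stratification of $\mathfrak X$. Since $\Lambda$ is Lagrangian and each $\overline{T^*_S(\mathfrak X)}$ is an irreducible Lagrangian subvariety of $T^*\mathfrak X$, it suffices to produce a bijection between the irreducible components of $\Lambda$ and the relevant strata $S$, and to check that the component attached to $S$ is exactly $\overline{T^*_S(\mathfrak X)}$. First I would recall (from \cite{GG}) the explicit parametrization of $\mathcal M_{\text{nil}}$: a point $(X,Y,i,j) \in \mathcal M_{\text{nil}}$ lies over a stratum determined by the conjugacy data $(\mathfrak l, \Omega)$ recording the centralizer structure of $X$ together with the relative position of the cyclic-type data for $i$; the nilpotency of $Y$ plus the commutator relation $[X,Y] + ji = 0$ then pins down $Y$ (and forces $j=0$ precisely on the relevant strata, by the definition of relevance via the $\mathbb C[X]$-invariant complement to $\mathbb C[X]\ell$).

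The key computational step is the fibre analysis. Fix a stratum $S \subseteq \mathfrak X$ and a point $(X,\ell) \in S$. I would describe the set of $Y$ (equivalently $(Y,i,j)$ modulo the scaling $\mathbb C^\times$) such that $(X,Y,i,j) \in \mathcal M_{\text{nil}}$ and project to $(X,\ell)$, and show this fibre is precisely the conormal fibre $T^*_{S,(X,\ell)}(\mathfrak X)$. Concretely: the relation $[X,Y] = -ji$ with $Y$ nilpotent, combined with $X$ regular on the relevant locus, lets one solve for the part of $Y$ not tangent to $S$ in terms of $i$ (up to the centralizer of $X$), and a dimension count — the fibre has dimension $\dim \mathfrak X - \dim S$ — shows the fibre is a linear space of the correct size. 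Irreducibility of each piece $\overline{T^*_S(\mathfrak X)}$ follows since $S$ is irreducible (being a single $\mathrm{PGL}(V)$- or $\mathrm{GL}(V)$-orbit type locus); that these are exactly the components of $\Lambda$, with no extra components and no over-counting, follows from the equidimensionality of $\mathcal M$ (hence of $\mathcal M_{\text{nil}}$ as a Lagrangian, by \cite[Theorem 1.1]{GG}) together with the fact that distinct relevant strata give distinct conormals.

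The main obstacle I expect is controlling the behaviour over the \emph{irrelevant} strata and at the boundary: one must verify that when $S$ is not relevant — so $j$ need not vanish and $\mathbb C[X]\ell$ has no invariant complement — the corresponding locus in $\mathcal M_{\text{nil}} \cap T^*\mathfrak G^\circ$ has strictly smaller dimension and therefore contributes no new irreducible component (it is absorbed into the closures of the relevant ones). This is where the precise meaning of ``relevant'' does real work: one needs the characterization of $\mathcal M_{\text{nil}}$ from \cite{GG} sharply enough to see that the only strata supporting a full-dimensional conormal contribution are the relevant ones. A secondary technical point is bookkeeping the $\mathbb C^\times$-quotient: one must check that passing from $\mathcal M_{\text{nil}} \cap T^*\mathfrak G^\circ$ to its quotient $\Lambda$ matches the conormal bundles in $T^*\mathfrak X$ rather than $T^*\mathfrak G^\circ$, using the identification of $T^*\mathfrak X$ with $\{\langle i,j\rangle = 0\}/\mathbb C^\times$ established in \S\ref{DmodsonG} and the fact that the stability/vanishing conditions are $\mathbb C^\times$-invariant. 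Once these dimension estimates are in place, the theorem follows by matching irreducible components on both sides.
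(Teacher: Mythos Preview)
The paper does not prove this theorem: it is quoted verbatim from \cite[\S 4.3]{GG} and used as a black box. There is therefore nothing to compare your proposal to in this paper; the actual argument lives in Gan--Ginzburg, where the proof proceeds via an explicit analysis of the fibres of the projection $\mathcal M_{\text{nil}} \to \mathfrak g \times \mathbb P(V)$ and a careful dimension count stratum by stratum.

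That said, your sketch contains a genuine confusion. You write that relevance ``forces $j=0$'' on the relevant strata. This is not what relevance does. Relevance is a condition on the base point $(X,\ell)$ alone (that $X$ is regular and $\mathbb C[X]\ell$ admits a $\mathbb C[X]$-invariant complement); it does not constrain $j$ in the conormal fibre. In the present paper it is the \emph{stability} condition, not relevance, that forces $j=0$ (see the proof of Lemma \ref{intersection}), and that lemma concerns the intersection with the stable locus $\mathfrak M^s$, not $\Lambda$ itself. Over a relevant stratum the conormal fibre in $T^*\mathfrak X$ is a genuine linear space of covectors $(Y,[i,j])$ annihilating the tangent space of $S$, and $j$ need not vanish there; the point of relevance is rather that the fibre of $\mathcal M_{\text{nil}}$ over $(X,\ell)$ has the correct (full Lagrangian) dimension precisely when $S$ is relevant, so that only those strata contribute irreducible components. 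Your outline of the remaining steps --- irreducibility of each $\overline{T^*_S\mathfrak X}$, equidimensionality from \cite[Theorem 1.1]{GG}, and the $\mathbb C^\times$-bookkeeping via the identification of $T^*\mathfrak X$ with $\{\langle i,j\rangle=0\}/\mathbb C^\times$ --- is broadly correct, but the substantive content is the fibre-dimension estimate over irrelevant strata, which you have flagged as an obstacle without indicating how to carry it out.
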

Now $Z$ is obtained from $\Lambda$ as the quotient of $\Lambda^s$ by the $\text{PGL}(V)$-action, thus the components of $Z$ correspond to certain components of $\Lambda$. To make this correspondence explicit, we first give a natural labelling of the components of $\Lambda$. Recall that a bipartition of $n$ is an ordered pair of partitions $(\lambda, \mu)$ such that $|\lambda|+|\mu| = n$, where for a partition $\nu=(\nu_1\geq \nu_2 \geq \ldots \geq \nu_k)$ we write $|\nu|=\sum_{i=1}^k \nu_i$, the sum of the parts of $\nu$. 

\begin{lemma}
The relevant strata of $\mathfrak X$ and hence the components of $\Lambda$ are labelled by bipartitions of $n$.
\end{lemma}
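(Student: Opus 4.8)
The plan is to make the combinatorics of relevant strata explicit and match them with bipartitions. Recall from the Gan--Ginzburg description that a stratum of $\mathfrak X$ is indexed by a conjugacy class of a pair $(\mathfrak l,\Omega)$, where $\mathfrak l\subset\Lieg=\mathfrak{gl}(V)$ is a Levi subalgebra and $\Omega$ is an $L$-orbit on $\mathcal N_{\mathfrak l}\times V$. First I would unwind the relevance condition: for $(X,\ell)$ in the stratum the element $X$ must be regular in $\Lieg$, and the line $\ell$ (more precisely the subspace $\mathbb C[X]\ell$) must admit a $\mathbb C[X]$-stable complement. A regular element of $\mathfrak{gl}(V)$ has, for each eigenvalue, a single Jordan block; writing its Levi as a product $\prod_i\mathfrak{gl}(V_i)$ according to the (generalized) eigenspace decomposition, the constraint that $X$ be regular with a $\mathbb C[X]$-invariant complement to $\mathbb C[X]\ell$ forces $X$ restricted to each $V_i$ to be either regular semisimple-type (a single Jordan block) and $\ell$ to interact with these blocks in one of exactly two ways.

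Concretely, the key step is: decompose $V=\bigoplus V_i$ into the generalized eigenspaces of $X$, so $X|_{V_i}$ is a single nilpotent Jordan block (after subtracting the eigenvalue), and the cyclic vector condition means $\ell$ projects either onto a cyclic vector of $V_i$ (the block ``sees'' $\ell$) or to $0$ (the block is a summand of the $\mathbb C[X]$-invariant complement). Grouping the eigenvalues whose block ``sees'' $\ell$ by the common dimension of $V_i$ gives a partition $\lambda$; grouping those that do not gives a partition $\mu$; and since $\dim V=n$ we get $|\lambda|+|\mu|=n$, i.e.\ a bipartition of $n$. The orbit $\Omega$ of $L$ on $\mathcal N_{\mathfrak l}\times V$ is then completely pinned down by this data: nilpotency of $X$ on each $V_i$ is forced, and the $L$-orbit of the $V$-component of $\ell$ is determined by which blocks see $\ell$, so the stratum depends only on the bipartition. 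Conversely, any bipartition $(\lambda,\mu)$ is realized: build $X$ as a direct sum of regular nilpotent blocks of the sizes prescribed by $\lambda\cup\mu$ at distinct eigenvalues, and choose $\ell$ to be spanned by a vector that is cyclic exactly for the blocks coming from $\lambda$. One checks this $(X,\ell)$ lies in a relevant stratum, establishing a bijection. Since by the preceding theorem the components of $\Lambda$ are the closures $\overline{T^*_S\mathfrak X}$ for $S$ relevant, the components of $\Lambda$ inherit the same labelling by bipartitions of $n$.

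I would organize the write-up as: (1) reduce to describing $X$ regular via its Jordan/eigenspace decomposition and identify the Levi $\mathfrak l$; (2) analyze, for each block, the two possibilities for $\ell$ forced by the ``$\mathbb C[X]$-invariant complement'' condition, extracting $(\lambda,\mu)$; (3) verify the stratum (i.e.\ the class of $(\mathfrak l,\Omega)$) depends only on $(\lambda,\mu)$; (4) give the reverse construction realizing every bipartition. The main obstacle I anticipate is step (2)--(3): one must argue carefully that a regular $X$ together with the complemented-cyclic-subspace condition really does decompose the problem block-by-block with exactly the binary alternative stated, and that the residual freedom in $\ell$ within the ``seen'' blocks (scaling of each cyclic component) is exactly absorbed by the centralizer $L$ of $X$ in $\Lieg$, so that no further discrete data survives. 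Keeping track of eigenvalue multiplicities (several blocks of the same size at different eigenvalues) versus block sizes is the bookkeeping that produces two \emph{partitions} rather than, say, two compositions, and this is where I would be most careful.
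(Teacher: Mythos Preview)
Your proposal is correct and follows essentially the same approach as the paper: decompose $V$ into generalized eigenspaces of a regular $X$, observe that on each block the projection of a nonzero $v\in\ell$ is forced to be either cyclic or zero by the complemented-subspace condition, and record the block sizes as the two partitions. The paper's proof is considerably terser---it gives only the forward direction and leaves well-definedness and surjectivity as evident---whereas you spell out the reverse construction and the check that no further discrete invariants survive; this extra care is harmless and arguably an improvement.
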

\begin{proof}
Clearly we need only check that we may index relevant strata this way. If $(X,\ell) \in S$ where $S$ is a relevant stratum, then we may write $X = z+x$ the Jordan decomposition of $X$, where $z$ is semisimple and $x$ is nilpotent. The dimensions of the eigenspaces of $z$ give a partition $\nu$ of $n$ (this corresponds to the conjugacy class of the Levi subalgebra  $Z_\Lieg(z)$). Moreover, the condition that $(X,\ell)$ is relevant implies that if $v \in \ell$ is a nonzero vector, then its projection to an eigenspace of $z$ is either zero, or a cyclic vector for $x$ restricted to that subspace. Dividing $\nu$ according to this dichotomy we obtain a bipartition as required. 
\end{proof}

\begin{definition}
For a bipartition $(\lambda,\mu)$ we will write $\Lambda_{\lambda,\mu}$ for the component of $\Lambda$ corresponding to $(X,\ell)$ where the partition $\lambda$ measures the dimensions of the generalized eigenspaces of $X$ in which the projection of $\ell$ is nonzero, while $\mu$ measures the dimensions of the generalized eigenspaces in which the projection of $\ell$ is zero.
\end{definition}
To relate this to the variety $Z$ we need to examine which components of $\Lambda$ intersect the stable locus $\mathfrak M^s$. For a bipartition $(\lambda,\mu)$ of $n$ let $S_{\lambda,\mu}$ denote the corresponding relevant stratum and $\Lambda_{\lambda,\mu} = \overline{T^*_{S_{\lambda,\mu}}\mathfrak X}$ denote the conormal variety of the stratum labelled by $(\lambda,\mu)$ by the above convention.

We now note the following easy lemma, which is presumably well-known.
\begin{lemma}
\label{commuting}
Let $X \in \mathfrak{gl}(V)$ be a regular element. Then 
\[
Z_\Lieg(X) = \mathbb C[X],
\]
that is, the matrices which commute with $X$ are precisely the subalgebra of $\text{End}(V)$ generated by $X$. Moreover it follows that the elements of $Z_\Lieg(X)$ which are nilpotent form a linear subspace.
\end{lemma}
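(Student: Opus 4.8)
The plan is to exploit the fact that a regular element of $\mathfrak{gl}(V)$ has a cyclic vector, and then run a dimension count. First I would recall that $X$ being regular means its centralizer $Z_\Lieg(X)$ has the minimal possible dimension, namely $n = \dim V$; for $\mathfrak{gl}(V)$ this is equivalent to each eigenvalue having a single Jordan block, equivalently to the minimal polynomial of $X$ having degree $n$, equivalently to $X$ admitting a cyclic vector $v$ (so that $v, Xv, \ldots, X^{n-1}v$ is a basis of $V$). I would take either of these as the working characterization — the paper uses ``regular'' in exactly the cyclic-vector sense (cf.\ the definition of $\mathfrak X^{\text{reg}}$), so this is consistent.

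The inclusion $\mathbb C[X] \subseteq Z_\Lieg(X)$ is trivial since polynomials in $X$ commute with $X$. For the reverse inclusion, fix a cyclic vector $v$ and suppose $A$ commutes with $X$. Since $v$ is cyclic there is a polynomial $p$ with $Av = p(X)v$. Then for each $k$ we have $AX^k v = X^k A v = X^k p(X) v = p(X) X^k v$, so $A$ and $p(X)$ agree on the spanning set $\{X^k v\}$, hence $A = p(X) \in \mathbb C[X]$. This shows $Z_\Lieg(X) = \mathbb C[X]$. (Alternatively one notes $\dim \mathbb C[X] = \deg(\text{min.\ poly}) = n = \dim Z_\Lieg(X)$ together with the trivial inclusion, but the cyclic-vector argument is cleaner and self-contained.)

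For the final assertion, write $X = z + x$ for its Jordan decomposition, with $z$ semisimple, $x$ nilpotent, and $z, x \in \mathbb C[X]$ (standard: $z$ and $x$ are themselves polynomials in $X$). An element $A = p(X) \in \mathbb C[X] = Z_\Lieg(X)$ is nilpotent if and only if it has no nonzero eigenvalue, i.e.\ if and only if $p$ vanishes at every eigenvalue of $X$ (equivalently at every eigenvalue of $z$); indeed on the generalized eigenspace of $X$ for an eigenvalue $a$, the operator $p(X)$ is $p(a)$ plus a nilpotent, so $p(X)$ is nilpotent iff $p(a) = 0$ for all such $a$. This last condition is a system of linear equations on the coefficients of $p$ (modulo the minimal polynomial), hence cuts out a linear subspace of $\mathbb C[X]$; explicitly, the nilpotent elements of $Z_\Lieg(X)$ form the subspace $x \cdot \mathbb C[X] \cap Z_\Lieg(X)$, or more transparently the span of $\{(X - a_1)\cdots(X-a_r) X^k\}$ where $a_1, \ldots, a_r$ are the distinct eigenvalues — in any case a linear subspace. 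I do not expect any genuine obstacle here; the only mild subtlety is making sure the ``nilpotent iff kills all eigenvalues'' step is phrased correctly when $X$ is not semisimple, which the generalized-eigenspace decomposition handles.
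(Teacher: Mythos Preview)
Your proof is correct, but it takes a genuinely different route from the paper's. You argue globally via a cyclic vector for $X$: if $Av=p(X)v$ then $A$ and $p(X)$ agree on the spanning set $\{X^kv\}$, so $A=p(X)$; and then $p(X)$ is nilpotent iff $p$ kills every eigenvalue of $X$, a linear condition on $p$. The paper instead decomposes $V=\bigoplus V_i$ into generalized eigenspaces, notes that the projections $\pi_i$ already lie in $\mathbb C[X]$ (Chinese remainder), and reduces to the single-block case where the centralizer of a regular nilpotent $N$ on $V_i$ is visibly $\mathbb C[N]$ with nilpotent part spanned by $\{N,N^2,\ldots\}$. Your argument is shorter and more self-contained; the paper's eigenspace-by-eigenspace description, however, is exactly what gets reused in the proofs of Lemma~\ref{intersection} and Lemma~\ref{Fourierdualofcomponents}, where one needs to know not just that $Y\in\mathbb C[X]$ but how $Y$ decomposes across the generalized eigenspaces of $X$ and when each block $y_i$ is regular nilpotent. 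So the paper's longer route is buying structural information for later use rather than just the bare statement.
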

\begin{proof}
Let $V= \bigoplus_{i=1}^k V_i$ be the decomposition of $V$ into the generalized eigenspaces of $X$. On each $V_i$ we have $X_{|V_i} = \lambda_i + N_i$, where $\lambda_i$ is a scalar and $N_i$ is a regular nilpotent endomorphism (since $X$ is regular). The standard proof of the existence of a Jordan  decomposition (\text{e.g.} by the Chinese remainder theorem) shows that the projection operators $\pi_i$ from $V$ to the generalized eigenspaces of $X$ lie in $\mathbb C[X]$ (in fact they are polynomials in $X$ with no constant term). Thus $\iota_i \circ N_i\circ \pi_i$ also lies in $\mathbb C[X]$ for each $i$, ($1 \leq i \leq k$) where $\iota_i$ is the inclusion of $V_i$ into $V$
.
It is easy to check that the endomorphisms of a vector space $U$ which commute with a regular nilpotent $N$ are exactly the linear maps in the subalgebra $\mathbb C[N]$, and since $\{1,N,N^2,\ldots, N^{\dim(U)-1}\}$ clearly form a basis of $\mathbb C[N]$ this is a vector space of dimension $\dim(U)$. The nilpotent such endomorphisms are simply those in the span of $\{N,N^2,\ldots\}$. 
 
If $Y$ commutes with $X$, then clearly it preserves the eigenspaces of $X$, and so we may write it as $Y = \sum_{i=1}Y_i$, where $Y_i = \iota_i \circ y_i \circ \pi_i$ with $y_i \in \text{End}(V_i)$. Since $y_i$ clearly commutes with $X_{|V_i}$ and hence with $N_i$ we see $y_i \in \mathbb C[N_i]$ and so each $Y_i \in \mathbb C[X]$, and hence $Y \in \mathbb C[X]$. Moreover the nilpotent such $Y$ again clearly form a linear subspace as required.
\end{proof}

Now we have the following:

\begin{lemma}
\label{intersection}
The intersection $\mathfrak M^s \cap \Lambda_{\lambda,\mu}$ is dense in $\Lambda_{\lambda,\mu}$ if and only if $\mu=\emptyset$.
\end{lemma}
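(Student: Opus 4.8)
The plan is to recognize that the statement is really about \emph{non-emptiness}. Since $\mathrm{U}$ is open in $T^*\mathfrak G$ and $\mathcal M^s = \mathcal M\cap\mathrm{U}$ is contained in $\mathcal M\cap T^*\mathfrak G^\circ$ (stability forces $i\neq 0$), the subset $\mathfrak M^s$ is open in $\mathfrak M$, so $\mathfrak M^s\cap\Lambda_{\lambda,\mu}$ is open in $\Lambda_{\lambda,\mu}$; as $\Lambda_{\lambda,\mu}=\overline{T^*_{S_{\lambda,\mu}}\mathfrak X}$ is irreducible (it is a component of $\Lambda$), this intersection is dense if and only if it is non-empty. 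I will also use that a point of $\mathfrak M^s$ is represented by a quadruple $(X,Y,i,j)$ with $j=0$, $[X,Y]=0$, $i\neq 0$ and $\mathbb C[X,Y]i=V$, while a point of $\Lambda_{\lambda,\mu}$ additionally has $Y$ nilpotent.

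For the ``if'' direction, assume $\mu=\emptyset$. Choose a regular $X\in\mathfrak{gl}(V)$ whose generalized eigenspaces have dimensions the parts of $\lambda$ (for instance a direct sum of Jordan blocks with distinct eigenvalues) together with a cyclic vector $i$ for $X$. Then $(X,0,i,0)$ is a stable point, so its class lies in $\mathfrak M^s$; and since $i$ is cyclic, its projection to every generalized eigenspace of $X$ is non-zero, so $(X,\mathbb C i)$ lies in the relevant stratum $S_{\lambda,\emptyset}$ and $(X,0,i,0)$ lies in the zero section of $T^*_{S_{\lambda,\emptyset}}\mathfrak X\subseteq\Lambda_{\lambda,\emptyset}$. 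Hence $\mathfrak M^s\cap\Lambda_{\lambda,\emptyset}\neq\emptyset$.

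For the ``only if'' direction I will show that $(X,Y,i,j)\mapsto\dim_{\mathbb C}\mathbb C\langle X,Y\rangle i$ is at most $|\lambda|$ at every point of $\Lambda_{\lambda,\mu}$; since $|\lambda|<n$ when $\mu\neq\emptyset$ this gives $\mathbb C\langle X,Y\rangle i\subsetneq V$ everywhere on $\Lambda_{\lambda,\mu}$, so no point is stable. Because $\mathbb C\langle X,Y\rangle i$ is spanned by the words in $X,Y$ of length $<n^2$ applied to $i$, the locus $\{\dim\mathbb C\langle X,Y\rangle i\leq|\lambda|\}$ is Zariski closed (vanishing of minors), so it suffices to verify the bound on the dense open subset $T^*_{S_{\lambda,\mu}}\mathfrak X$ of $\Lambda_{\lambda,\mu}$. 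At such a point $(X,\mathbb C i)$ lies in $S_{\lambda,\mu}$, so $X$ is regular and I may decompose $V=V_\lambda\oplus V_\mu$ into the sum of those generalized eigenspaces of $X$ on which $\mathbb C i$ projects non-trivially, respectively trivially; thus $i\in V_\lambda$, $X(V_\lambda)=V_\lambda$, and $\dim V_\lambda=|\lambda|$. Writing operators in block form for this decomposition, the $(V_\mu,V_\lambda)$-block of $ji$ vanishes since $i\in V_\lambda$, so the equation $[X,Y]+ji=0$ yields $X|_{V_\mu}\circ Y_{\mu\lambda}=Y_{\mu\lambda}\circ X|_{V_\lambda}$ for the $(V_\mu,V_\lambda)$-block $Y_{\mu\lambda}$ of $Y$. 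As the spectra of $X|_{V_\lambda}$ and $X|_{V_\mu}$ are disjoint this forces $Y_{\mu\lambda}=0$, i.e.\ $Y(V_\lambda)\subseteq V_\lambda$; since $i\in V_\lambda$ we get $\mathbb C\langle X,Y\rangle i\subseteq V_\lambda$, giving the bound.

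The crux — and the only part that is not bookkeeping with the stratification recalled above — is the observation in the last paragraph that $i\in V_\lambda$ together with $[X,Y]+ji=0$ forces $Y$ to preserve $V_\lambda$, via disjointness of the spectra of $X$ on the two summands; the one other point needing care is the semicontinuity statement that lets one pass from the open stratum to its closure $\Lambda_{\lambda,\mu}$.
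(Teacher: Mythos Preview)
Your proof is correct, and the ``if'' direction matches the paper's: both exhibit stable points in $T^*_{S_{\lambda,\emptyset}}\mathfrak X$ (the paper in fact notes that every point there is stable, while you pick one on the zero section).

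For the ``only if'' direction you take a genuinely different route from the paper. The paper argues by contradiction on the open stratum $T^*_{S_{\lambda,\mu}}\mathfrak X$: assuming a stable point exists there, stability forces $j=0$, hence $[X,Y]=0$; regularity of $X$ then gives $Y\in\mathbb C[X]$ (via the structure of the centralizer of a regular element), so $\mathbb C\langle X,Y\rangle=\mathbb C[X]$ and stability makes $i$ cyclic for $X$, whence $\mu=\emptyset$. You instead prove unconditionally that $\dim\mathbb C\langle X,Y\rangle i\le|\lambda|$ on the open stratum by a Sylvester-type argument on the $(V_\mu,V_\lambda)$-block of $[X,Y]+ji=0$ (using only $i\in V_\lambda$ and disjointness of spectra, not $j=0$), and then pass to the closure by semicontinuity. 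Your approach avoids invoking the centralizer lemma and in fact yields the slightly stronger conclusion $\mathfrak M^s\cap\Lambda_{\lambda,\mu}=\emptyset$ directly on the whole closure; the paper obtains that only after combining the lemma with the openness of $\mathfrak M^s$ in the subsequent remark. The trade-off is the extra semicontinuity step, which the paper's argument sidesteps by working entirely on the dense open conormal.
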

\begin{proof}
Let $S= S_{\lambda, \mu}$. We will in fact we show that 
\[
\mathfrak M^s \cap T^*_S(\mathfrak X) = \left\{\begin{array}{cc}\emptyset & \text{if } \mu \neq \emptyset, \\T^*_S(\mathfrak X) & \text{if } \mu = \emptyset. \end{array}\right.
\]
Suppose that $(X,Y,i,j)$ is a point in the intersection, and let $X = z+x$ be the Jordan decomposition of $X$, where $z$ is semisimple and $x$ is nilpotent. Let $V = \bigoplus_{i=1}^k V_i$ be the decomposition of $V$ according to the eigenspaces of $z$. The stability condition forces $j$ to be zero, so that $X$ and $Y$ commute, and hence $Y$ commutes with both $z$ and $x$. As $Y$ and $x$ commute with $z$, they both preserve the subspaces $V_i$ and we write $x_i$ for $x_{|V_i}$ and $y_i$ for $Y_{|V_i}$ respectively ($1 \leq i \leq k$). 

Since $(X,\mathbb C \cdot i)$ is relevant, $x_i$ must be a regular nilpotent, and so its centralizer in $\text{End}(V_i)$ is just $\mathbb C[x_i]$. Thus $y_i \in \mathbb C[x_i]$. Since the projections $V \to V_i$ also lie in $\mathbb C[X]$ it follows that $y_i$ (or rather its extension by zero to $V$) lies in $\mathbb C[X]$ and hence $Y \in \mathbb C[X]$. But then $\mathbb \langle X,Y \rangle = \mathbb C[X]$, and the stability condition then forces $i$ to be a cyclic vector for $X$, and hence $\mu=\emptyset$ as required. Conversely if $\mu = \emptyset$, then it is evident that any point in $T^*_S(\mathfrak X)$ satisfies the stability condition.
\end{proof}

\begin{remark}
\label{componentwarning}
Note that the above proof does \textit{not} imply that $\Lambda_{\lambda,\emptyset}\cap \mathfrak M^s$ is contained in $T_{S_{\lambda,\emptyset}}^*(\mathfrak X)$. For example when the bipartition is $((n),\emptyset)$, then the points of the corresponding stratum consist of matrices of the form $c + n$ where $c$ is a scalar and $n$ is a regular nilpotent, together with a line which is cyclic for $n$. The closure of the corresponding conormal variety contains stable quadruples $(X,Y,i,0)$ where $X$ is not regular. If we take $n=3$ say then it is easy to see that
\[
\big(\left(\begin{array}{ccc}0 & 0 & 0 \\0 & 0 & 1 \\0 & 0 & 0\end{array}\right), \left(\begin{array}{ccc}0 & 0 & 1 \\0 & 0 & 0 \\0 & 0 & 0\end{array}\right),\left(\begin{array}{c}0 \\0 \\1\end{array}\right),0\big),
\] 
is a stable quadruple, with neither $X$ or $Y$ regular. On the other hand, since $\text{U}$ is Zariski open, so is $(\text{U} \cap T^*\mathfrak G^\circ)/\mathbb C^\times$, thus if a component of $\Lambda$ intersects $\mathfrak M^s$ it will do so in a dense open subset. It follows that the Lemma tell us exactly which components of $\Lambda$ have nontrivial intersection with $\Lambda^s$.
\end{remark}

Recall we set $\Msn = \mathcal M^s\cap \mathcal M_\text{nil}$. It is $\text{GL}(V)$-stable. Its quotient by $\text{GL}(V)$ is the variety $Z$, and its quotient by the action of $\mathcal Z$ is the variety $\Lambda^s = \Lambda \cap \mathfrak M^s$. From Lemma \ref{intersection} and Remark \ref{componentwarning} we see that the components of $\Lambda^s$ are labelled by bipartitions of $n$ of the form $(\lambda, \emptyset)$. Since for a point $(X,Y,\ell) \in \Lambda^s$, the spectrum of $X$ yields the $n$-tuple of points in $S^n(\Sigma)$ under the composition of the quotient map with the Hilbert-Chow morphism, it is easy to check  that the component $Z_\lambda$ of $Z$ described in $\S$\ref{GrojZ} corresponds to the component of $\Lambda^s$ labelled by the bipartition $(\lambda, \emptyset)$. We will write $\Lambda^s_\lambda = \Lambda^s \cap \Lambda_{\lambda,\emptyset}$ for this component, and  $\Msn_\lambda$ for the corresponding components of $\Msn$.
 
\subsection{}

We now define a decomposition of $\Lambda^s $ into locally closed pieces. It will evidently be $G$-stable, and hence induce a decomposition of $Z$. In fact for convenience in this subsection we will work mostly with $\Msn$. Recall that if $(X,Y,i,j) \in \mathcal M^s$, then we have $j=0$, so that $X,Y$ commute with each other. We will thus prefer to write $(X,Y,i)$ for a point in $\Msn$ rather than $(X,Y,i,0)$.

\begin{definition}
Let $(X,Y,i) \in \Msn$. Let $X = z+x$ be the decomposition of $X$ into its semisimple and nilpotent parts respectively,  and let $V = \bigoplus_{i=1}^k V_i$ be the decomposition of $V$ into the eigenspaces of $z$. Since $Y$ commutes with $z$ it preserves each $V_i$, and its restriction to $V_i$ is a nilpotent endomorphism with Jordan type $\nu^{(i)}$ say. In this way we obtain a multipartition $\underline{\nu} = (\nu^{(1)},\nu^{(2)},\ldots,\nu^{(k)})$, and we may assume that $(\dim(V_1),\ldots,\dim(V_k))$ form a partition $\lambda$ of $n$. Let $\Msn_{\lambda,\underline{\nu}}$ denote the set of $(X,Y,i,j)\in \Lambda$ which give rise to the pair $(\lambda,\underline{\nu})$ in this way, where if $\lambda_i = \lambda_{i+1}$ then we must identify the parameters $(\lambda,\underline{\nu})$ and $(\lambda,\underline{\nu}')$ where $\underline \nu'$ is obtained from $\underline \nu$ by exchanging the $i$-th and $(i+1)$th partitions. We will also write $\Lambda^s_{\lambda,\underline{\nu}}$ for the corresponding subset of $\Lambda$.
\end{definition}
\noindent

We now wish to show that the pieces of our decomposition are smooth and to calculate their dimension. For this we begin with a technical lemma which gives a sort of normal form for elements of $\Msn$.

\begin{lemma}
Suppose that $(X,Y,v) \in \Msn$, and that $X$ and $Y$ are nilpotent endomorphisms. For $i,j \in \mathbb Z_{\geq 0}$, set $e_{i,j} = X^iY^j(v)$. Then we may find integers $k,s_0,s_2,\ldots s_{k-1},$ so that  $\{e_{ij}: 0 \leq i <k, 0\leq j < s_i\}$ is a basis of $V$, and moreover the action of $X$ on $V$ is given by
\[
X(e_{i,j}) = \left\{\begin{array}{cc}e_{i+1,j}, & \text{if } i<k-1, j < s_{i+1}, \\0, & \text{otherwise}.\end{array}\right.
\]
while if $\mathcal F = (F_i)$ denotes the decreasing filtration of $V$ given by $F_i = \text{im}(X^i)$, then action of $Y$ must satisfy:
\[
Y(e_{i,j}) = \left\{\begin{array}{cc}e_{i,j+1}, & \text{if } i<k, j< s_i-1 \\ w_{i+1}, & \text{if } j = s_i-1.\end{array}\right.
\]
where $w_{i+1}$ is some vector in $F_{i+1}$.

\end{lemma}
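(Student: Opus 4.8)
The plan is to study the $X$-adic structure of $V$ and read everything off from the associated graded. Since $(X,Y,v)\in\Msn$, the stability condition (as recalled in the text) forces $X$ and $Y$ to commute and $v$ to be a cyclic vector for the now-commutative algebra $\mathbb C\langle X,Y\rangle$, so $V=\mathbb C\langle X,Y\rangle v$. As $X$ is nilpotent the filtration $\mathcal F=(F_i)$ with $F_i=\text{im}(X^i)$ is finite, $V=F_0\supseteq F_1\supseteq\cdots\supseteq F_k=0$ with $F_{k-1}\neq 0$; this $k$ will be the integer in the statement, and $F_k=0$ is equivalent to $X^kv=0$ because $v$ generates $V$. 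The basic observation is that each $F_i$ is again a cyclic $\mathbb C\langle X,Y\rangle$-module: $F_i=X^iV=X^i\,\mathbb C\langle X,Y\rangle v=\mathbb C\langle X,Y\rangle\,(X^iv)$, generated by $e_{i,0}=X^iv$.

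Next I would pass to the successive quotients $F_i/F_{i+1}$. Since $Y$ commutes with $X$ it preserves each $F_i$, and because $F_{i+1}=XF_i$ the quotient $F_i/F_{i+1}$ is a cyclic module for the nilpotent operator induced by $Y$, generated by the image of $e_{i,0}$; hence it is isomorphic to $\mathbb C[t]/(t^{s_i})$ for a unique $s_i\geq 1$, with $\{\overline{e_{i,j}}:0\le j<s_i\}$ a basis (here one uses $X^iY^j=Y^jX^i$, so that $Y^je_{i,0}=e_{i,j}$). A downward induction on $i$ — at stage $i$ the classes $\overline{e_{i,j}}$, $0\le j<s_i$, lift the chosen basis of $F_i/F_{i+1}$ while $F_{i+1}$ already carries the asserted basis — then shows $\{e_{i,j}:0\le i<k,\ 0\le j<s_i\}$ is a basis of $V$, with $\sum_i s_i=\sum_i\dim(F_i/F_{i+1})=\dim V$.

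The two actions are then almost immediate. For $Y$ one has $Y(e_{i,j})=X^iY^{j+1}v=e_{i,j+1}$, a basis vector exactly when $j+1<s_i$, while $Y(e_{i,s_i-1})=Y^{s_i}X^iv$ lies in $F_{i+1}$ by the definition of $s_i$ and so furnishes $w_{i+1}$. For $X$ one has $X(e_{i,j})=e_{i+1,j}$, which equals the named basis vector when $i+1\le k-1$ and $j<s_{i+1}$, and vanishes when $i=k-1$ since $X^k=0$. I would organise the whole argument as an induction on $\dim V$, applied to the pair $(X|_{F_1},Y|_{F_1})$ with cyclic vector $Xv=e_{1,0}$: this pair again lies in the stable locus with both operators nilpotent, it recovers $k-1$ and $s_1,\dots,s_{k-1}$, and reassembling it with the top quotient $V/F_1$ (which fixes $s_0$) yields the basis and the $X$- and $Y$-actions for all $i\ge 1$.

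The one case not covered by this induction is $X(e_{0,j})$ for $s_1\le j<s_0$, and this is where I expect the real work to be: a priori one knows only that $X(e_{0,j})=XY^jv$ lies in $F_2$ (its class in $F_1/F_2$ is $\overline{Y^jXv}$, killed because $j\ge s_1$), and one must upgrade this to the exact vanishing in the formula. Pinning down this tight interaction between the $X$-filtration and the cyclic $Y$-module structure on the top quotient $V/F_1$ — if necessary by refining the choice of the $s_i$ or of the lifts so that the filtration and the $Y$-action become simultaneously compatible — is the crux; everything else is bookkeeping.
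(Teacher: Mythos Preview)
Your approach is the same as the paper's: both use the $X$-adic filtration $F_i=\mathrm{im}(X^i)$, cyclicity of the induced $Y$-action on each quotient $F_i/F_{i+1}$, and an induction (the paper inducts on $k$ via $V/F_{k-1}$, you induct on $\dim V$ via $F_1$; these are equivalent). Everything you write about the basis and about the $Y$-action is correct and matches the paper's argument.

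The gap you flag in the last paragraph is genuine, and in fact the assertion $X(e_{i,j})=0$ for $i<k-1$, $j\ge s_{i+1}$ is \emph{false} as stated. Take $V=\mathbb C^4$ with basis $f_0,\dots,f_3$ and set
\[
X:\ f_0\mapsto f_2,\ f_1\mapsto f_3,\ f_2\mapsto f_3,\ f_3\mapsto 0,\qquad
Y:\ f_0\mapsto f_1,\ f_1\mapsto 0,\ f_2\mapsto f_3,\ f_3\mapsto 0,
\]
with $v=f_0$. One checks $[X,Y]=0$, both are nilpotent, and $v$ is cyclic, so $(X,Y,v)\in\Msn$. Here $k=3$, $(s_0,s_1,s_2)=(2,1,1)$, the basis is $e_{0,0}=f_0$, $e_{0,1}=f_1$, $e_{1,0}=f_2$, $e_{2,0}=f_3$, and
\[
X(e_{0,1})=Xf_1=f_3=e_{2,0}\neq 0,
\]
even though $j=1\ge s_1=1$. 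What your argument (and the paper's induction, which has the same gap) actually yields is $X(e_{i,j})\in F_{i+2}$ in this range, since $X(e_{i,j})=Y^j(X^{i+1}v)$ and $Y^{s_{i+1}}$ sends $F_{i+1}$ into $F_{i+2}$. No choice of lifts fixes this, because the $e_{i,j}$ are prescribed by the formula $X^iY^jv$. So you should not expect to close this gap; the correct statement replaces ``$0$'' by ``some vector in $F_{i+2}$'' in the second line of the $X$-action, in parallel with what is written for $Y$.
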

\begin{proof}
Let $k$ be the nilpotence of degree $X$, so that $X^k=0$ but $X^{k-1} \neq 0$, and set $F_i = \text{im}(X^i)$ so that $\mathcal F = (F_i)_{0 \leq i \leq k-1}$ is a $k$-step (decreasing) flag in $V$. Now since $Y$ commutes with $X$, and hence with every power of $X$ we immediately see that $Y$ preserves the flag $\mathcal F$, and clearly $X(F_i)= F_{i-1}$. Let $s_i = \dim(F_i/F_{i+1})$ for $0 \leq i <k$, clearly $\mu = (s_0,s_1,\ldots s_{k-1})$ is a partition of $n$ (the dual partition to the one given by the Jordan type of $X$). 

Now the vector $X^{k-1}(v)$ is clearly cyclic for the action of $\mathbb C[X,Y]$ restricted to $F_{k-1}$, but since $X$ acts by zero on $F_{k-1}$, it follows that $X^{k-1}(v)$ is cyclic for the action of $Y$ on $F_{k-1}$, and hence $\{e_{k-1,j}: 0 \leq j < s_{k-1}\}$ is a basis for $F_{k-1}$, and our description of the action of $X,Y$ on this part of the basis is established. But now considering $V/F_{k-1}$ and using induction on $k$ completes the proof.

\end{proof}

\begin{remark}
The $\text{GL}(V)$ orbit of $X$ is of course just the Richardson orbit attached to the parabolic determined by the flag $\mathcal F$. 
\end{remark}

\begin{lemma}
\label{weightingaction}
Suppose that $(X,Y,v) \in \Msn$ and $X,Y$ are nilpotent. Then given $a \geq b \in \mathbb Z$, we may find a homomorphism $\rho \colon \mathbb C^\times \to \text{GL}(V)$ such that 
\begin{enumerate}
\item
$\text{Ad}(\rho(t))(X) = t^a X$,
\item
 $\text{Ad}(\rho(t))(Y) = t^{b}Y_0 + O(t^{a})$, 
 \item
 $\rho(t)(v) = v$.
 \end{enumerate}
 Moreover, the triple $(X,Y_0,v)$ lies in $\Msn$.
\end{lemma}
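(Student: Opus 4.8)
The plan is to build $\rho$ by hand from the normal form supplied by the preceding lemma. Fix the basis $\{e_{ij}:0\le i<k,\ 0\le j<s_i\}$, $e_{ij}=X^iY^j(v)$, produced there, and let $\rho\colon\mathbb C^\times\to\GL(V)$ be the cocharacter which is diagonal in this basis with $\rho(t)\cdot e_{ij}=t^{\,ai+bj}e_{ij}$. Then (1) and (3) are immediate from the normal form: $v=e_{00}$ sits in weight $0$, so $\rho(t)(v)=v$; and since the only nonzero values of $X$ on the basis are $X(e_{ij})=e_{i+1,j}$, and the weight of $e_{i+1,j}$ exceeds that of $e_{ij}$ by exactly $a$, conjugation gives $\mathrm{Ad}(\rho(t))(X)=t^aX$.

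For (2) I would feed the description of $Y$ into $\mathrm{Ad}(\rho(t))$, splitting $Y=Y'+Y''$, where $Y'(e_{ij})=e_{i,j+1}$ (with the convention $e_{i,s_i}=0$) and $Y''(e_{i,s_i-1})=w_{i+1}$, $Y''(e_{ij})=0$ for $j<s_i-1$. Since $e_{ij}\mapsto e_{i,j+1}$ raises the weight by exactly $b$, one has $\mathrm{Ad}(\rho(t))(Y')=t^bY'$. For $Y''$ the key point is that $w_{i+1}$ lies in $F_{i+1}=\mathrm{im}(X^{i+1})$, which by the normal form is spanned by those $e_{mq}$ with $m\ge i+1$; the weight of such an $e_{mq}$ exceeds that of $e_{i,s_i-1}$ by $a(m-i)+b(q-s_i+1)$, and the jump $m\ge i+1$, combined with $a\ge b$ and $q\le s_i-1$, is what pushes $\mathrm{Ad}(\rho(t))(Y'')$ into powers of $t$ beyond $t^b$, indeed into $O(t^a)$. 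Setting $Y_0:=\lim_{t\to0}t^{-b}\mathrm{Ad}(\rho(t))(Y)$ then yields $\mathrm{Ad}(\rho(t))(Y)=t^bY_0+O(t^a)$. This weight count for $Y''$ is the one genuinely delicate step: one must verify that no component of any $w_{i+1}$ can land in weight $<b$ and that, after isolating the weight-$b$ part, the remainder really is of the stated order. (In the situation where the lemma is applied, $X$ restricted to each generalized eigenspace is regular nilpotent, so $Y$ is a constant‑term‑free polynomial in $X$ and $\mathrm{Ad}(\rho(t))(Y)$ is just $Y$ with $t^aX$ substituted for $X$; the estimate is then transparent.)

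For the last assertion I would check $(X,Y_0,v)\in\Msn$ directly. Since $(X,Y,v)\in\Msn$ we have $[X,Y]=0$, so $[\,t^aX,\mathrm{Ad}(\rho(t))(Y)\,]=\mathrm{Ad}(\rho(t))([X,Y])=0$ for $t\ne0$; hence $X$ commutes with $t^{-b}\mathrm{Ad}(\rho(t))(Y)$ and therefore with $Y_0$. Likewise $Y_0^{\,n}=\lim_{t\to0}t^{-nb}\mathrm{Ad}(\rho(t))(Y^n)=0$, so $Y_0$ is nilpotent and $(X,Y_0,v,0)$ satisfies the defining equation of $\M$ and lies in $\mathcal M_{\mathrm{nil}}$. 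Finally, for $j<s_i-1$ the operator $Y$ sends $e_{ij}$ to $e_{i,j+1}$ and to nothing else, a term living in weight exactly $b$, so $Y_0(e_{ij})=e_{i,j+1}$ there; combined with $[X,Y_0]=0$ and $X^iv=e_{i0}$ this gives $X^iY_0^{\,j}(v)=e_{ij}$ for every index $(i,j)$, whence $\mathbb C\langle X,Y_0\rangle v=V$ and $(X,Y_0,v)\in\Ms$. Therefore $(X,Y_0,v)\in\Msn$, completing the proof.
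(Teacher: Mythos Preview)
Your approach is exactly the paper's: take $\rho$ diagonal in the normal-form basis $\{e_{ij}\}$ of the preceding lemma. Your formula $\rho(t)e_{ij}=t^{ai+bj}e_{ij}$ is the right one (the paper's printed exponent $-aj-bi$ is a slip --- with that choice one gets $\mathrm{Ad}(\rho(t))X=t^{-b}X$, not $t^aX$). Your checks of (1), (3) and of the stability of $(X,Y_0,v)$ match the paper's but are more explicit; the paper simply writes down $Y_0$ (your $Y'$) and asserts the rest.

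The one place your argument is genuinely incomplete --- and you flag it --- is the $O(t^{a})$ bound on $Y''$. The weight count you give, $a(m-i)+b(q-s_i+1)$ with $m\ge i+1$ and $0\le q\le s_m-1\le s_i-1$, does not on its own force this quantity to be $\ge a$ (nor even $\ge b$) when $b>0$: the second summand can be very negative. For instance with $k=2$, $(s_0,s_1)=(3,2)$ and $w_1=e_{10}$ one obtains a valid $(X,Y,v)\in\Msn$, and the $e_{0,2}\mapsto e_{1,0}$ component of $Y''$ has weight shift $a-2b$, which is $0$ for $(a,b)=(2,1)$; so your limit definition of $Y_0$ would not even exist in that case. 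Your parenthetical about ``$X$ regular on each generalized eigenspace'' does not help here, since in the lemma $X$ is nilpotent (a single generalized eigenspace) and is not assumed regular. The paper's proof does not address this step either --- it just asserts the expansion --- so your argument is not less complete than the paper's, but the inequality you record is not sufficient to give the stated $O(t^a)$.
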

\begin{proof}
We use the basis constructed in the previous Lemma. For $t \in \mathbb C^\times$, let $\rho(t)$ act on $V$ by 
\[
\rho(t)(e_{i,j}) = t^{-aj-bi}e_{i,j},  \quad (0 \leq i \leq k-1, 0 \leq j \leq s_i).
\]
Since $v= e_{0,0}$, condition $(3)$ is immediate, and similarly condition $(2)$ follows readily. Moroever to check condition $(1)$, we simply define $Y_0$ by
\[
Y_0(e_{i,j}) = \left\{\begin{array}{cc}e_{i,j+1}, & \text{if } j < s_i; \\0 & \text{if } j=s_i.\end{array}\right.
\]
The stability of the triple $(X,Y_0,v)$ is then clear.
\end{proof}

With these lemmas in hand, we can now check that the pieces of our decomposition are smooth. 

\begin{prop}
\label{pieces}
Each piece $\Msn_{\lambda,\underline{\nu}}$ is a connected smooth locally closed subvariety of $\Lambda^s$, and if $Z_{\lambda,\underline{\nu}}$ is its image under the quotient map to $Z \subset \Hil$, then $Z_{\lambda,\underline{\nu}}$ is an affine space bundle over $S_{\lambda}^n(\Sigma) \subset S^n(\mathbb C^2)$ via the Hilbert-Chow morphism. Moreover the dimension of the stratum $Z_{\lambda,\underline{\nu}}$ is given by
\[
n - \sum_{i=1}^k(\ell(\nu^{(i)})-1).
\]
\end{prop}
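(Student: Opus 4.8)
The plan is to push $\Msn_{\lambda,\underline\nu}$ down to $S^n_\lambda(\Sigma)$ through the Hilbert--Chow morphism, reduce the fibre to a product of ``punctual'' pieces, and then identify each such piece as an affine space using the normal form above.

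First I would set up the fibration. On $\Msn$ the stability condition forces $j=0$, so for $(X,Y,i)\in\Msn$ the endomorphisms $X$ and $Y$ commute; writing $X=z+x$ for its Jordan decomposition and $V=\bigoplus_{m=1}^{k}V_m$ for the decomposition into the eigenspaces of $z$, with distinct eigenvalues $\alpha_1,\dots,\alpha_k$ and $\dim V_m=\lambda_m$, the composite of the quotient $\Msn\to Z$ with $\pi$ sends $(X,Y,i)$ to the point $\sum_m\lambda_m\cdot(\alpha_m,0)$ of $S^n(\mathbb C^2)$ (nilpotence of $Y$ on each $V_m$ kills the vertical coordinates), which lies in $S^n_\lambda(\Sigma)$. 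Over such a point one uses $\mathrm{GL}(V)$ to fix $z$ and the decomposition $V=\bigoplus V_m$, leaving the residual group $\prod_m\mathrm{GL}(V_m)$; since $X$ and $Y$ preserve every $V_m$ and the $\alpha_m$ are distinct, a Chinese-remainder argument shows that $\mathbb C\langle X,Y\rangle\cdot i=V$ if and only if $\mathbb C\langle X|_{V_m},Y|_{V_m}\rangle\cdot i_m=V_m$ for each $m$. Hence the fibre is canonically $\prod_m Z^{\mathrm{loc}}_{\nu^{(m)}}$, where for a partition $\rho$ of $r$ I write $Z^{\mathrm{loc}}_\rho$ for the space of triples $(x,y,v)$ on $\mathbb C^{r}$ with $x,y$ commuting nilpotent, $v$ cyclic for $\mathbb C\langle x,y\rangle$, and $y$ of Jordan type $\rho$, taken modulo $\mathrm{GL}_r$ (equivalently, via $v\mapsto 1$, the locus of the punctual Hilbert scheme $\mathrm{Hilb}^{r}_0(\mathbb C^2)$ on which the vertical operator has Jordan type $\rho$). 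Letting the point of $S^n_\lambda(\Sigma)$ move through a small disc and transporting the chosen decomposition shows this bundle is Zariski-locally trivial; and $\Msn_{\lambda,\underline\nu}$ is locally closed in $\Lambda^s$ because prescribing the eigenvalue multiplicities of $X$ and the Jordan types of the $Y|_{V_m}$ are locally closed conditions.

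The heart of the matter is to prove that $Z^{\mathrm{loc}}_\rho$ is an affine space of dimension $|\rho|-\ell(\rho)$. I would apply the normal form of the previous lemma to $(x,y,v)$: once the relevant filtration (by powers of the nilpotent operator whose Jordan type is pinned) is fixed, the operators are determined by the ``correction vectors'' $w_{i+1}$, which a priori range over a product of the subspaces $\mathrm{im}(y^{i+1})$. The sole remaining condition is $[x,y]=0$, and the key point is that in the normal-form basis this is a system of \emph{linear} equations in the $w_{i+1}$ --- each equation either forces some $w_{i+1}$ to vanish or expresses it as an earlier $w$ pushed forward by $y$. Thus $Z^{\mathrm{loc}}_\rho$ is carved out of an affine space by linear equations, hence is itself an affine space, and counting the free parameters gives the dimension $|\rho|-\ell(\rho)$. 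To know that the triple with these coordinates is really the general point of $Z^{\mathrm{loc}}_\rho$ --- equivalently, that $Z^{\mathrm{loc}}_\rho$ is exactly the attracting cell of the $\mathbb C^\times$-action rescaling $x$ and $y$, with the unique monomial ideal in it as its centre --- I would invoke Lemma \ref{weightingaction}: for $a\geq b$ it supplies, for any $(x,y,v)$ in the piece, a one-parameter subgroup which rescales $x$ while degenerating $y$ to its associated graded $y_0$, and iterating this lets one flow every point of the piece into the monomial fixed point, pinning down the cell and its closure relations.

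Finally I would assemble the statement. The base $S^n_\lambda(\Sigma)$ is smooth and connected (a configuration space of distinct points of $\Sigma$ modulo a free finite-group action), and by the above the fibre $\prod_m Z^{\mathrm{loc}}_{\nu^{(m)}}$ is a connected smooth affine space; hence $Z_{\lambda,\underline\nu}$ is connected, smooth, and an affine space bundle over $S^n_\lambda(\Sigma)$, of dimension $k+\sum_m\bigl(|\nu^{(m)}|-\ell(\nu^{(m)})\bigr)=n-\sum_m\bigl(\ell(\nu^{(m)})-1\bigr)$, using $\sum_m|\nu^{(m)}|=n$. Since $\Msn_{\lambda,\underline\nu}$ is a $\mathrm{PGL}(V)$-bundle over $Z_{\lambda,\underline\nu}$ it inherits connectedness and smoothness, completing the proof. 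The step I expect to fight hardest with is the one in the previous paragraph: showing that the commutation relation is genuinely \emph{linear} in the normal-form coordinates (so that no nonlinear correction sneaks in and the parametrisation is onto the cell and an isomorphism), and it is precisely the $\mathbb C^\times$-action of Lemma \ref{weightingaction} that makes this tractable.
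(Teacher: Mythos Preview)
Your reduction to the punctual case via the product decomposition of the Hilbert--Chow fibre is exactly what the paper does, and the assembly of the dimension formula at the end is correct. The divergence is in how you establish that each punctual piece $Z^{\mathrm{loc}}_\rho$ is an affine space of dimension $|\rho|-\ell(\rho)$.

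Your plan is to use the normal-form lemma to write down explicit affine coordinates $w_{i+1}$ and argue that commutativity is a linear condition on them. This does not work as stated. The difficulty is that the normal-form basis $e_{i,j}=Y^iX^jv$ (in the version swapped so as to pin the Jordan type of $Y$) depends on $X$ itself, so ``fixing the basis and varying the $w$'s'' is not the same as moving through $Z^{\mathrm{loc}}_\rho$; and if you instead fix an abstract basis and impose the lemma's formulas, the action of $Y$ on the basis vectors $e_{i,j}$ with $s_{i+1}\le j<s_i$ is \emph{not} simply zero (that clause of the lemma is imprecise --- the correct statement is only that the image lies in $F_{i+2}$). Concretely, for $\rho=(3,1)$ one has $s=(2,1,1)$, the a priori parameter space $F_1\times F_2$ is $3$-dimensional, and the linear commutation constraints you describe kill two of those parameters, leaving a one-dimensional locus --- whereas $Z^{\mathrm{loc}}_{(3,1)}$ is two-dimensional. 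The missing degree of freedom is precisely the ``hidden'' nonzero value of $Y(e_{0,1})$ that your setup suppresses.

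The paper sidesteps this entirely. Rather than parametrise directly, it embeds $Z_{(n)}$ in the smooth projective variety $\text{Hilb}^n(\mathbb P^2)$ and applies the Bialynicki-Birula theorem for a generic one-parameter subgroup of the torus: this produces, for free, a decomposition of $Z_{(n)}$ into genuine affine spaces indexed by the monomial fixed points, with the known dimensions $n-\ell(\mu)$. Lemma~\ref{weightingaction} is then used only for the \emph{identification} step --- to show that the attracting flow preserves the relevant Jordan type, hence that the BB cell through a point depends only on that type --- not to supply coordinates on the cell. You should replace your direct-parametrisation paragraph with this argument; your own invocation of Lemma~\ref{weightingaction} already contains half of it.
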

\begin{proof}
Since the strata $S^n_{\lambda}(\Sigma)$ are smooth locally closed subvarieties of $\mathbb C^n/S_n$ the last part of the Proposition implies the other claims. Thus it is enough to show that $Z_{\lambda,\underline{\nu}}$ is an affine space bundle over $S^n_\lambda(\Sigma)$. 

For this we first examine the case where $\lambda = (n)$, so that $S^n_\lambda(\Sigma)$ is just a point. The component $Z_{(n)}$ is the ``punctual'' Hilbert scheme, that is the moduli space of codimension $n$ ideals supported at $0 \in \mathbb C^2$, and in $T^*\mathfrak G^\circ$ it corresponds to the set
\[
\Msn_{(n)}= \{(X,Y,v) \in T^*\mathfrak G^\circ: [X,Y]=0, \mathbb C[X,Y]v = \mathbb C^n, X,Y \text{ nilpotent}\}
\]
For a point $(X,Y,v) \in \mathcal M^s$ we will write $[X,Y,v]$ for the corresponding point in the quotient $\text{Hilb}^n(T^*\Sigma)$. We claim that fixing the Jordan type of $X$ describes a locally closed affine space in $Z_{(n)}$.  To see this we use a torus action (for more details on the facts used here see for example \cite[Chapter 5]{Na}). Let $T^2 = (\mathbb C^\times)^2$ acts on $\Msn_{(n)}$ by 
\[
(t_1,t_2)(X,Y,v) = (t_1X,t_2Y,v).
\]
This action descends to the natural action of $T^2$ on $\text{Hilb}^n(T^*\Sigma)$ induced from the action of $T^2$ on $T^*\Sigma \cong \mathbb C^2$.

Recall the classical Bialynicki-Birula result which shows that a smooth projective variety with a $\mathbb G_m$-action has a natural decomposition into pieces which are affine bundles over the components of the fixed-point locus of that action. Now we cannot apply the Bialynicki-Birula result directly to $Z_{(n)}$, since it is not smooth. However the $T^2$ action on $Z_{(n)}$ extends to the whole Hilbert scheme $\text{Hilb}^n(\mathbb P^2)$ of $n$ points in the projective plane, which is smooth and projective, and hence the Bialynicki-Birula result may be applied to it. In fact, the action has finitely many fixed points, so that the pieces are affine spaces: if $x \in \text{Hilb}^n(\mathbb P^2)$ then the piece of the decomposition attached to $x$ is given by:
\[
P_x = \{y \in \text{Hilb}^n(\mathbb P^2): \lim_{t \to \infty} \nu(t)\cdot y = x\}.
\]
Choosing the $\mathbb G_m$-action to be given by a generic subgroup $\nu\colon \mathbb G_m \to T^2$ where $\nu(t) = (t^a,t^b)$ with $a>b>0$, we see by considering the action of $\mathbb G_m$ on the symmetric product $S^n(\mathbb C^2)$ (\textit{i.e.} on the spectra of matrices $X$ and $Y$) that no point $x$ outside $Z_{(n)}$ can have $\lim_{t \to \infty} \nu(t).x$ lying inside $Z_{(n)}$. Thus, since $Z_{(n)}$ is certainly $T^2$-invariant, it is a union of pieces of the decomposition of $\text{Hilb}^n(\mathbb P^2)$. (See for example \cite[\S 1]{ES} for the statement of the Bialynicki-Birula result and \cite[\S 2]{ES} for a similar use of the theorem for subvarieties of $\text{Hilb}^n(\mathbb P^2)$ which are preserved by the torus action). 

It is known (see \cite[\S 5.2]{Na} for a detailed discussion) that the $T^2$-action (on $\text{Hilb}^n(\mathbb A^2)$) has finitely many fixed points corresponding to monomial ideals of codimension $n$ all of which lie in $Z_{(n)}$, thus the pieces in the decomposition of $Z_{(n)}$ are naturally labelled by partitions of $n$. If $x_\lambda$ is the monomial ideal corresponding to the partition $\lambda$ we will write we write $P_{\lambda}$ for the corresponding piece. It is known that $P_\lambda$ is an affine space of dimension $n-\ell(\lambda)$. Letting $p \colon \mathcal M^s \to \Hil$ denote the quotient map, we claim that $p^{-1}(P_\lambda) = \Msn_{(n),\lambda}$.

To show this we use a one-parameter subgroup in $\text{GL}(V)$ as in Lemma \ref{weightingaction}. Since the $T^2$ action commutes with the $G$-action we have
\[
\begin{split}
\nu(t)[X,Y,\ell)] &= [t^{a}X, t^bY,\ell] \\
&= [(\rho(t^{-1})(t^{a}X),\rho(t^{-1})(t^bY),\rho(t^{-1})\ell)] \\
&= [X,Y_0 + Y_t,\ell],
\end{split}
\]
where $Y_t \in t^{-1}\text{End}(V)[t^{-1}]$. Thus since $\lim_{t \to \infty} (X,Y_0+Y_t,v) = (X,Y_0,v)$, which again by Lemma \ref{weightingaction} lies in $\Msn$, we see that the limit in $Z_{(n)}$ preserves the Jordan type of $X$. Since this also characterizes the fixed points of the $\mathbb C^\times$-action on $Z_{(n)}$ it follows immediately that $p^{-1}(P_{\lambda}) = \Msn_{(n),\lambda}$ as claimed.

In the general case, if $x \in S^n_\lambda(\Sigma)$ then $\pi^{-1}(x)$ is a product of punctual Hilbert schemes, indeed we have
\[
\pi^{-1}(x) \cong Z_{(\lambda_1)}\times Z_{(\lambda_2)} \times \ldots \times Z_{(\lambda_k)},
\]
where $\lambda = (\lambda_1,\lambda_2,\ldots,\lambda_k)$, and the isomorphism is given by taking, for $[X,Y,v] \in \pi^{-1}(x)$, the components of the nilpotent parts\footnote{Of course $Y$ is nilpotent, so is equal to its nilpotent part.} of $X$ and $Y$ in the eigenspaces of the semisimple part of $X$, along with the projection of the vector $v$ to that eigenspace. It follows immediately that the intersection of the $Z_{\lambda, \underline{\nu}}$ with $\pi^{-1}(x)$ is a product of affine spaces, and hence itself an affine space. Moreover, the dimension of the affine space is clearly
\[
\sum_{i=1}^k (\lambda_i - \ell(\nu^{(i)})) = n - \sum_{i=1}^k \ell(\nu^{(i)}). 
\]
But since $S^n_\lambda(\Sigma)$ clearly has dimension $k=\ell(\lambda)$, it follows that
\[
\dim(Z_{\lambda,\underline{\nu}})= n - \sum_{i=1}^k(\ell(\nu^{(i)})-1).
\]
as claimed. 
\end{proof}

\begin{remark}
Notice that it follows from the dimension formula that the pieces of the decomposition of maximal dimension are the pieces $Z_{\lambda,\underline{\nu}}$ where 
\[
\underline{\nu} = ((\lambda_1), (\lambda_2),\ldots, (\lambda_k)).
\]
Notice moreover that the pieces of our decomposition which have dimension $n-1$ are  those for which exactly one partition $\nu^{(i)}$ has two parts. From this it follows that such a piece lies in exactly two components of $Z$, those being $Z_\lambda$ and $Z_{\lambda'}$ where $\lambda'$ is obtained from $\lambda$ by replacing $\lambda_i$ by the two parts of $\nu^{(i)}$. Thus the decomposition of $Z$ also allows us to describe which components of $Z$ intersect in codimension $1$. 

For example, the components which intersect the punctual Hilbert scheme $Z_{(n)}$ in codimension $1$ are those of the form $Z_{(k,n-k)}$, while the only component intersecting $Z_{(1^n)}$ in codimension $1$ is $Z_{(2,1^{n-2})}$.
\end{remark}

\subsection{}
\label{smoothpieces} 
We now use our decomposition of $\Msn$ to find the smooth locus of $\Msn$. We claim that its components are exactly the maximal dimensional pieces of our decomposition. Thus we set $\Msnt_\lambda = \Msn_{\lambda,\underline{\nu}}$ where $\nu = ((\lambda_1),(\lambda_2),\ldots,(\lambda_k))$, that is, each partition in $(\underline\nu)$ has a single part. By the dimension formula in Proposition \ref{pieces} each $\Msnt_\lambda$ has dimension equal to $\dim(\Lambda^s)$. Let $\tilde{Z}_\lambda$ and $\tL_\lambda$ denote the corresponding pieces of $Z$ and $\Lambda^s$ respectively.

\begin{lemma}
\label{smoothlocus}
Let $\Msnt_\lambda$ be as above. Then $\Msnt_\lambda$ is an open dense subvariety of $\Msn_\lambda$ consisting of smooth points of $\Msn$. 
\end{lemma}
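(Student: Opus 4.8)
The plan is to bootstrap from Proposition \ref{pieces}, from the fact that $p\colon\mathcal M^s\to\Hil$ is a $\text{GL}(V)$-torsor (so smooth with connected fibres), and from a Bia\l ynicki--Birula analysis of the contracting $\mathbb C^\times$-action on $\text{Hilb}^n(T^*\Sigma)$ of $\S$\ref{GrojZ}.

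First, openness and density in $\Msn_\lambda$. A point of $\Msnt_\lambda$ maps under the Hilbert--Chow morphism to a configuration with multiplicities exactly $\lambda$, so the image $\tilde Z_\lambda$ of $\Msnt_\lambda$ in $Z$ lies in $Z_\lambda^0\subseteq Z_\lambda$; as $\Msnt_\lambda$ is $G$-stable this gives $\Msnt_\lambda\subseteq\Msn_\lambda=p^{-1}(Z_\lambda)$. Now $\Msn_\lambda$ is irreducible, being the preimage under the torsor $p$ of the irreducible $Z_\lambda=\overline{Z_\lambda^0}$, and has dimension $n+\dim G$; by Proposition \ref{pieces}, $\Msnt_\lambda$ is a connected and smooth, hence irreducible, locally closed subvariety of $\Msn$ of the same dimension $n+\dim G$. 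An irreducible locally closed subvariety of full dimension inside an irreducible variety is open and dense, so $\Msnt_\lambda$ is open and dense in $\Msn_\lambda$.

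It remains to show that each point of $\Msnt_\lambda$ is a smooth point of $\Msn$. Since $\Msnt_\lambda$ is a smooth variety, it suffices to prove $\Msnt_\lambda$ is \emph{open} in $\Msn$, for then $\Msn$ coincides with $\Msnt_\lambda$ near each of its points. The irreducible components of $\Msn$ are the $\Msn_\mu=p^{-1}(Z_\mu)$ for partitions $\mu$ of $n$, so by $G$-equivariance of $p$ it is enough to prove $\tilde Z_\lambda\cap Z_\mu=\emptyset$ in $\Hil$ whenever $\mu\neq\lambda$. For this I would use the $\mathbb C^\times$-action on $\text{Hilb}^n(T^*\Sigma)$ scaling the cotangent fibres: it is contracting, every limit $\lim_{t\to0}t\cdot x$ exists and lies in $Z$ (see \cite[\S7.2]{Na}), and since $\text{Hilb}^n(T^*\Sigma)$ is smooth the Bia\l ynicki--Birula theorem presents it as a disjoint union of smooth locally closed attracting cells $W_\alpha$, affine bundles over the fixed components $\text{Fix}_\alpha$, with $\overline{W_\alpha}\setminus W_\alpha$ a union of cells of strictly smaller dimension. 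By $\S$\ref{GrojZ} the fixed components, hence the cells, are indexed by partitions, with $\overline{W_\lambda}=\overline{Z_\lambda^0}=Z_\lambda$, so $\dim W_\lambda=\dim Z_\lambda=n$ for every $\lambda$; moreover the limit computation in the proof of Proposition \ref{pieces} (scaling the second coordinate, cf.\ Lemma \ref{weightingaction}) shows precisely that points of the top piece $\tilde Z_\lambda$ flow into $\text{Fix}_\lambda$, i.e.\ $\tilde Z_\lambda\subseteq W_\lambda$. Now for $\mu\neq\lambda$ the cell $W_\lambda$ is disjoint from $W_\mu$ and cannot lie in $\overline{W_\mu}\setminus W_\mu$ (which has dimension $<n$), so $W_\lambda\cap Z_\mu=W_\lambda\cap\overline{W_\mu}=\emptyset$ and hence $\tilde Z_\lambda\cap Z_\mu=\emptyset$. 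Thus $\tilde Z_\lambda$ is open in $Z$ (being open in $Z_\lambda$ and disjoint from the other components), so $\Msnt_\lambda=p^{-1}(\tilde Z_\lambda)$ is open in $\Msn$, and the lemma follows.

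The main obstacle is the last step, and specifically its two inputs: identifying the attracting cells $W_\lambda$ precisely enough to see $\tilde Z_\lambda\subseteq W_\lambda$, and applying Bia\l ynicki--Birula in this non-proper but contracting setting. Both are close to material already in hand: the inclusion $\tilde Z_\lambda\subseteq W_\lambda$ is essentially the limit computation of Proposition \ref{pieces}, and a Bia\l ynicki--Birula argument of the same flavour was used there for $\text{Hilb}^n(\mathbb P^2)$. A more elementary alternative, avoiding Bia\l ynicki--Birula, is to analyse degenerations along the Hilbert--Chow morphism directly: a point of $\tilde Z_\lambda$ maps to a configuration with multiplicities exactly $\lambda$, and one checks that over such a configuration the component $Z_\mu$ — for $\mu$ a proper refinement of $\lambda$, the only case needing attention — meets the fibre only in subschemes on which $Y$ acts with strictly more Jordan blocks than it does at points of $\tilde Z_\lambda$.
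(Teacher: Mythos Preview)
Your argument for openness and density of $\Msnt_\lambda$ in $\Msn_\lambda$ is fine, and in fact more explicit than the paper's (which simply uses this density without comment).

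The Bia\l ynicki--Birula argument for disjointness from the other components, however, has a genuine gap. For the contracting $\mathbb C^\times$-action $t\cdot(x,y)=(x,ty)$ on $\Hil$, the attracting cells $W_\alpha$ partition all of $\Hil$, which is $2n$-dimensional; in particular $W_{(1^n)}$ is the open locus where $X$ has $n$ distinct eigenvalues, so $\dim W_{(1^n)}=2n$, and your assertion ``$\dim W_\lambda=n$ for every $\lambda$'' is false. (The display in \S\ref{GrojZ} should not be read as saying $\overline{W_\lambda}=Z_\lambda$ for this action; indeed the fixed locus already has more components than there are partitions of $n$ --- for $n=2$ both the horizontal and the vertical fat point on $\Sigma$ are fixed.) Consequently the step ``$W_\lambda\cap\overline{W_\mu}=\emptyset$ because $\overline{W_\mu}\setminus W_\mu$ has dimension $<n$'' does not go through. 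One could try to run Bia\l ynicki--Birula on $Z$ itself, but $Z$ is singular so the theorem does not apply directly, and filtrability (closures of cells being unions of cells) would still need a separate argument.

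The paper's proof is precisely your ``more elementary alternative'', carried out cleanly and without any Bia\l ynicki--Birula machinery. Suppose $(X_0,Y_0,v_0)\in\Msnt_\lambda$ lies in $\Msn_\mu=\overline{\Msnt_\mu}$ and take a sequence $(X_i,Y_i,v_i)\in\Msnt_\mu$ converging to it. On one hand each $Y_i$ has Jordan type $\mu$, so by semicontinuity of nilpotent orbit closures the limit $Y_0$ has Jordan type $\le\mu$ in the dominance order, whence $\ell(\lambda)\ge\ell(\mu)$. On the other hand each $X_i$ has exactly $\ell(\mu)$ distinct eigenvalues (with multiplicities $\mu$), and eigenvalues can only collide in the limit, so $X_0$ has at most $\ell(\mu)$ distinct eigenvalues, i.e.\ $\ell(\lambda)\le\ell(\mu)$. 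Hence $\ell(\lambda)=\ell(\mu)$, no collision occurred, and the multiplicities agree: $\lambda=\mu$. Your sketch of the alternative had the right idea but did not isolate the two opposing inequalities (one from the $Y$-side, one from the $X$-side) that make it work.
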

\begin{proof}
Since $\Msnt_\lambda$ is a piece of our decomposition, we have already established that it is smooth. 
To conclude that $\Msnt_\lambda$ consists of smooth points of $\Msn$ it remains to check that $\Msnt_\lambda$ does not intersect any other component of $\Msn$. For this suppose that $(X_0,Y_0,v_0) \in \Msnt_\lambda$ lies in some other component of $\Msn$, say $\Msn_\mu$. Then since $\Msnt_\mu$ is dense in $\Msn_\mu$ we see that $(X_0,Y_0,v_0)$ must lie in the closure of $\Msnt_\mu$. Thus suppose that $(X_i,Y_i,v_i)_{i \geq 1} \in \Msnt_\mu$ converges\footnote{For this argument, we use the complex topology on our varieties, not the Zariski topology.} to $(X_0,Y_0, v_0)$. But now the Jordan type of each $Y_i$ is given by $\mu$, and since the $(Y_i)$ converges to $Y_0$ we must have $\mu \geq \lambda$ in the dominance ordering on partitions, and hence certainly we must have $\ell(\mu) \leq \ell(\lambda)$. 

On the other hand, letting $s \colon \Msn \to \mathfrak h/S_n$ be the map given by taking the eigenvalues of $X$. Since the sequence $s(X_i,Y_i,v_i)$ consists of $\ell(\mu)$ distinct points with multiplicities $\mu_i$, $(1 \leq i \leq \ell(\mu))$, it follows that either $X_0$ has at most $\ell(\mu)$ distinct eigenvalues and so $\ell(\mu) \geq \ell(\lambda)$. Hence we see that we must have $\ell(\mu) = \ell(\lambda)$, and then moreover the eigenvalue multiplicities of $X_0$ must be equal to those of the $X_i$'s (as there can be no ''collision'' of eigenvalues), whence we have $\lambda = \mu$ as desired.
\end{proof}

\begin{remark}
It is easy to see that every other piece in our decomposition lies in more than one component of $Z$, and hence the union of our subsets $\tilde{Z}_\lambda$ in fact yields the entire smooth locus of $Z$.
\end{remark}

\begin{example}
If $\lambda = (n)$, then $Y$ has a single Jordan block, and the space $\tL_\lambda$ is just an affine space bundle over $\mathcal O_Y$ with fibres of dimension $2n-1$. The corresponding subset in $Z_{(n)}$ is an affine space of dimension $n-1$, it is the open cell in an affine paving of $Z_{(n)}$.
\end{example}

 We now wish to compute the fundamental group of the smooth loci $\tilde{\mathcal M}^{\text{sn}}_\lambda$. For this we make the following definition.

\begin{definition}
Let $\lambda$ be a partition of $n$. We may write $\lambda$ uniquely as $(i_1^{c_1},i_2^{c_2},\ldots, i_r^{c_r})$ where $1 \leq i_1<i_2< \ldots < i_r \leq n$. Let $k = 
\ell(\lambda) = \sum_{i=1}^r c_i$. Let $\mathcal B_\lambda$ be the fundamental group of the configuration space of $k$ labelled points in $\mathbb C$ where the labels lie in $\{i_1,i_2,\ldots, i_r\}$ and there are $c_j$ points with label $i_j$. In our earlier notation this is just the space $S^n_\lambda(\mathbb C)$. Let $\mathcal P_\lambda$ be the subgroup of $\mathcal B_\lambda$ corresponding to the cover of the configuration space where all $k$ points are distinct, \textit{i.e.} the pure braid group on $k$ strands, and let $\Sigma_\lambda$ be the quotient $\mathcal B_\lambda/\mathcal P_\lambda$ so that $\Sigma_\lambda \cong S_{a_1}\times S_{a_2} \times \ldots \times S_{a_k}$, where $S_a$ denotes the symmetric group on $a$ letters.
\end{definition}
Thus for example if $\lambda = (1^n)$ this group is the standard braid group on $n$ strands.
Note that $\mathcal B_\lambda$ is a subgroup of the braid group on $k$ strands. It contains the parabolic braid subgroup given by the partition associated to the composition $(c_1,c_2,\ldots,c_r)$, but is strictly bigger -- for example it contains the isotopies which rotate points labeled $i$ and $j$ a full $2\pi$ about each other.  

The fundamental group of $\tilde{\mathcal M}^{\text{sn}}_\lambda$ is closely related to $\mathcal B_\lambda$ as we now show. For $\mathbf a = (a_1,a_2,\ldots a_{r-1}) \in \mathbb C^{r-1}$, let 
\[
J_\mathbf a(t) = \left(\begin{array}{cccccc}t & a_1 & a_2 & \ldots & a_{r-2} & a_{r-1} \\0 & t & a_1 &  &  & a_{r-2} \\0 & 0 & \ddots & \ddots & \ddots & \vdots \\0 & 0 & 0 & \ddots & a_1 & a_2 \\0 & 0 & 0 & 0 & t & a_1 \\0 & 0 & 0 & 0 & 0 & t\end{array}\right) \in \text{Mat}_r(\mathbb C).
\]

Let $\mu = \{\mu_1,\ldots,\mu_k\} \in S_{\lambda}^n(\mathbb C)$ with labels (\textit{i.e.} multiplicities) given by the parts of $\lambda$ written in increasing order: $d_1\leq d_2 \ldots \leq d_k$ so that $d_j \in \{i_1,i_2,\ldots,i_r\}$ for each $j$, ($1 \leq j\leq k$). Then if $\mathbf a^j \in \mathbb C^{d_j-1}$ for $1 \leq j \leq k$, set $X_\lambda(\mathbf a^1,\mathbf a^2,\ldots,\mathbf a^k)$ to be the block diagonal matrix:
\[
\left(\begin{array}{cccc}J_{\mathbf a^1}(\mu_1) & 0 & 0 & 0 \\0 & J_{\mathbf a^2}(\mu_2) & 0 & 0 \\0 & 0 & \ddots & 0 \\0 & 0 & 0 & J_{\mathbf a^k}(\mu_k)\end{array}\right)
\]
and similarly set $Y_\lambda$ to be the block diagonal nilpotent matrix with Jordan blocks of size $d_1,d_2,\ldots,d_k$. Finally, set $v_\lambda \in \mathbb C^n$ to be the vector with $v_j = 1$ if $j \in \{\sum_{s=1}^t d_s: 1\leq t \leq k\}$ and $v_j=0$ otherwise. It is then easy to check that $(X_\lambda(\mu,\mathbf a^1,\ldots, \mathbf a^k), Y_\lambda,v_\lambda) \in \tilde{\mathcal M}^{\text{sn}}_{\lambda}$, and moreover if $(X_\lambda(\mu, \mathbf a^1,\ldots,\mathbf a^k),Y_\lambda,v_\lambda)$ and $(X_\lambda(\mu',\mathbf b^1,\ldots,\mathbf b^k),Y_\lambda, v_\lambda)$ are two such points then they lie in the same $\text{GL}(V)$-orbit if and only if they are equal up to permutation of the Jordan blocks of the $X$s -- \text{i.e.} conjugate by an element $\Sigma_\lambda$ viewed as the subgroup of the group of permutation matrices which interchanges our diagonal blocks of the same size (respecting the order within each block). Thus if we set $\mathcal S_\lambda$ to be the subspace of $\tilde{\mathcal M}^{\text{sn}}_\lambda$ consisting of points of the form $(X(\mu,\mathbf a^1,\ldots, \mathbf a^k),Y_\lambda,v_\lambda)$, then we have a natural map
\[
p_\lambda \colon \text{GL}(V)\times \mathcal S_\lambda \to \tilde{\mathcal M}^{\text{sn}}_\lambda,
\]
given by the $\text{GL}(V)$-action, and this map is readily seen to be a $\Sigma_\lambda$-covering, where $\Sigma_\lambda$ acts on $\text{GL}(V)$ on the right (embedded as a subgroup of permutation matrices) and on $\mathcal S_\lambda$ by the restriction of the $\text{GL}(V)$-action on $\Msn$.
 
\begin{prop}
\label{microlocalpi1}
Let $\lambda$ be a partition of $n$. Then we have exact sequences:

\xymatrix{
& & &  1\ar[r] & \mathbb Z \ar[r] & \pi_1(\tilde{\mathcal M}^{\text{sn}}_\lambda) \ar[r] & \mathcal B_\lambda \ar[r] & 1
}

\xymatrix{
& & &  1\ar[r] & \mathbb Z\times \mathcal P_\lambda \ar[r] & \pi_1(\tilde{\mathcal M}^{\text{sn}}_\lambda) \ar[r] & \Sigma_\lambda \ar[r] & 1
}
\noindent
Moreover, $\mathcal P_\lambda$ is normal in $\pi_1(\tilde{\mathcal M}^{\text{sn}}_\lambda)$, the subgroup $\mathbb Z$ is central, and $\pi_1(\tilde{Z}_\lambda) \cong \mathcal B_\lambda$.
\end{prop}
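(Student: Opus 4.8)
The plan is to extract everything from the $\Sigma_\lambda$-covering map $p_\lambda \colon \mathrm{GL}(V) \times \mathcal S_\lambda \to \Msnt_\lambda$ constructed just above, together with a homotopy-theoretic analysis of the total space $\mathrm{GL}(V) \times \mathcal S_\lambda$. First I would identify $\mathcal S_\lambda$ up to homotopy. The data of a point of $\mathcal S_\lambda$ is a point $\mu = \{\mu_1,\dots,\mu_k\} \in S^n_\lambda(\mathbb C)$ together with the affine parameters $\mathbf a^1,\dots,\mathbf a^k$; since the $\mathbf a^j$ range over affine spaces $\mathbb C^{d_j-1}$ and contribute nothing to the fundamental group, the projection $\mathcal S_\lambda \to S^n_\lambda(\mathbb C)$ is an affine-space bundle, hence a homotopy equivalence. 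Wait, I need to be careful about whether the fibres are really honest affine bundles globally, but this follows from the explicit block-matrix description: the $\mathbf a^j$-coordinates vary freely and independently, so $\mathcal S_\lambda$ deformation retracts onto the locus $\mathbf a^j = 0$, which maps homeomorphically to $S^n_\lambda(\mathbb C)$. Therefore $\pi_1(\mathcal S_\lambda) \cong \pi_1(S^n_\lambda(\mathbb C)) = \mathcal B_\lambda$ by definition of $\mathcal B_\lambda$.

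Next I would combine this with $\pi_1(\mathrm{GL}(V)) \cong \mathbb Z$ (complex general linear group, which deformation retracts onto $U(n)$, with $\pi_1 U(n) = \mathbb Z$). So $\pi_1(\mathrm{GL}(V) \times \mathcal S_\lambda) \cong \mathbb Z \times \mathcal B_\lambda$. Now $p_\lambda$ is a covering map with deck group $\Sigma_\lambda$ acting freely; since $\Msnt_\lambda$ is connected (Proposition \ref{pieces}) and $p_\lambda$ is a $\Sigma_\lambda$-cover, we get the short exact sequence
\[
1 \to \pi_1(\mathrm{GL}(V) \times \mathcal S_\lambda) \to \pi_1(\Msnt_\lambda) \to \Sigma_\lambda \to 1,
\]
i.e. $1 \to \mathbb Z \times \mathcal B_\lambda \to \pi_1(\Msnt_\lambda) \to \Sigma_\lambda \to 1$. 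But we want the sequences in the statement, which involve $\mathbb Z \times \mathcal P_\lambda$ rather than $\mathbb Z \times \mathcal B_\lambda$, and $\mathcal B_\lambda$ rather than $\mathbb Z \times \mathcal B_\lambda$ on the right. To reconcile these I would observe that $\mathcal P_\lambda$ is by definition the kernel of $\mathcal B_\lambda \onto \Sigma_\lambda$ (the pure braid group), and that the composite $\mathbb Z \times \mathcal B_\lambda \hookrightarrow \pi_1(\Msnt_\lambda) \onto \Sigma_\lambda$ must factor through the projection $\mathbb Z \times \mathcal B_\lambda \onto \mathcal B_\lambda \onto \Sigma_\lambda$ — this is the one genuinely delicate point: one needs to check that the $\mathbb Z$ factor maps trivially to $\Sigma_\lambda$ and that the induced map $\mathcal B_\lambda \to \Sigma_\lambda$ is indeed the standard quotient (identifying Jordan-block permutations with the forgetting-of-labels map on configuration spaces). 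Granting that, the kernel of $\pi_1(\Msnt_\lambda) \onto \Sigma_\lambda$ is exactly the preimage of $\mathcal P_\lambda$ under $\mathbb Z \times \mathcal B_\lambda \onto \mathcal B_\lambda$, which is $\mathbb Z \times \mathcal P_\lambda$; this gives the second exact sequence. For the first, quotient the whole diagram by the central $\mathbb Z$ and by $\mathcal P_\lambda$: since $\mathcal P_\lambda \triangleleft \mathcal B_\lambda$ and the extension is compatible, $\pi_1(\Msnt_\lambda)/\mathbb Z$ sits in $1 \to \mathcal P_\lambda \to \pi_1(\Msnt_\lambda)/\mathbb Z \to \Sigma_\lambda \to 1$, and reassembling shows $\pi_1(\Msnt_\lambda)/\mathbb Z \cong \mathcal B_\lambda$ — actually the cleanest route is: the first sequence is obtained by quotienting $\pi_1(\Msnt_\lambda)$ by the central $\mathbb Z$ and noting the quotient surjects onto $\Sigma_\lambda$ with kernel $\mathcal P_\lambda$, hence equals an extension of $\Sigma_\lambda$ by $\mathcal P_\lambda$ which one identifies with $\mathcal B_\lambda$ via the configuration-space picture. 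The normality of $\mathcal P_\lambda$ in $\pi_1(\Msnt_\lambda)$ and centrality of $\mathbb Z$ are then immediate from the construction (the $\mathbb Z$ comes from $\pi_1 \mathrm{GL}(V)$ which is central in any case, and $\mathcal P_\lambda \triangleleft \mathcal B_\lambda \triangleleft \pi_1$).

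Finally, for $\pi_1(\tilde Z_\lambda) \cong \mathcal B_\lambda$: recall $\tilde Z_\lambda$ is the quotient of $\Msnt_\lambda$ by the full $\mathrm{GL}(V)$-action (which is free on $\mathcal M^s$, hence on $\Msnt_\lambda$, by the stability condition and Proposition \ref{smoothlocus}). So $\Msnt_\lambda \to \tilde Z_\lambda$ is a principal $\mathrm{GL}(V)$-bundle; alternatively it is a $\mathrm{PGL}(V)$-bundle after quotienting the centre, and $\mathrm{PGL}(V)$ acts freely on $\Msnt_\lambda / \mathcal Z = \tL_\lambda$. The long exact homotopy sequence of the fibration $\mathrm{GL}(V) \to \Msnt_\lambda \to \tilde Z_\lambda$ reads $\pi_2(\tilde Z_\lambda) \to \pi_1(\mathrm{GL}(V)) \to \pi_1(\Msnt_\lambda) \to \pi_1(\tilde Z_\lambda) \to \pi_0(\mathrm{GL}(V)) = 1$. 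Since $\pi_1(\mathrm{GL}(V)) = \mathbb Z$ maps to the central $\mathbb Z \subset \pi_1(\Msnt_\lambda)$ injectively (this matches the first exact sequence), the sequence forces $\pi_1(\tilde Z_\lambda) \cong \pi_1(\Msnt_\lambda)/\mathbb Z \cong \mathcal B_\lambda$, which is precisely the first exact sequence read modulo $\mathbb Z$.

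The main obstacle I anticipate is not the homotopy-sequence bookkeeping but rather verifying that $\mathcal S_\lambda$ really is homotopy equivalent to $S^n_\lambda(\mathbb C)$ cleanly — i.e. that the affine parameters $\mathbf a^j$ do not interact with the constraint $(X,Y,v)\in\Msn$ in a way that changes the topology — and, relatedly, pinning down that the map $\mathcal B_\lambda \to \Sigma_\lambda$ induced on $\pi_1$ by the $\Sigma_\lambda$-cover genuinely coincides with the standard forget-the-labels quotient. Both are "clear from the explicit description" but require one to actually trace a loop in $S^n_\lambda(\mathbb C)$ up to $\mathcal S_\lambda$, lift it across the $\Sigma_\lambda$-cover $p_\lambda$, and see that swapping two equal-size Jordan blocks of $X$ is what implements a transposition in $\Sigma_\lambda$. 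Once this compatibility of the two quotient maps $\mathcal B_\lambda\onto\Sigma_\lambda$ is nailed down, everything else is diagram-chasing with central extensions.
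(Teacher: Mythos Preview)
There is a genuine error in your identification of $\pi_1(\mathcal S_\lambda)$. A point of $\mathcal S_\lambda$ is a specific matrix $X_\lambda(\mu,\mathbf a^1,\ldots,\mathbf a^k)$, which records the eigenvalues $\mu_1,\ldots,\mu_k$ in a definite order (the $j$-th eigenvalue sits in the $j$-th diagonal block). Thus when you retract the affine parameters $\mathbf a^j$ to zero you land on the space of \emph{ordered} $k$-tuples of distinct complex numbers, not on $S^n_\lambda(\mathbb C)$. Consequently $\pi_1(\mathcal S_\lambda)\cong \mathcal P_\lambda$, the pure braid group on $k$ strands, and $\pi_1(\mathrm{GL}(V)\times\mathcal S_\lambda)\cong\mathbb Z\times\mathcal P_\lambda$. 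This is exactly what the $\Sigma_\lambda$-cover $p_\lambda$ is doing: $\Sigma_\lambda$ acts on $\mathcal S_\lambda$ by permuting blocks of equal size, and the quotient $\mathcal S_\lambda/\Sigma_\lambda$ is the thing that retracts onto $S^n_\lambda(\mathbb C)$.

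This error is what forces your awkward ``reconciliation'' paragraph. You obtain $1\to\mathbb Z\times\mathcal B_\lambda\to\pi_1(\Msnt_\lambda)\to\Sigma_\lambda\to 1$ and then try to argue that the kernel is really $\mathbb Z\times\mathcal P_\lambda$; but the kernel of a quotient map is the whole image of the subgroup, so these two statements are incompatible. With the correct identification $\pi_1(\mathcal S_\lambda)=\mathcal P_\lambda$ the second exact sequence drops out of the covering $p_\lambda$ immediately with no reconciliation needed, and this is precisely how the paper proceeds.

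For the first exact sequence the paper does not extract it from the second by group-theoretic manipulation; it argues directly. Since $\tilde Z_\lambda$ is an affine-space bundle over $S^n_\lambda(\Sigma)$ one has $\pi_1(\tilde Z_\lambda)\cong\mathcal B_\lambda$ straight away. Then the principal $\mathrm{GL}(V)$-bundle $\Msnt_\lambda\to\tilde Z_\lambda$ gives the long exact homotopy sequence, and the key input you omit is that $S^n_\lambda(\Sigma)$ is a $K(\pi,1)$ (as a finite quotient of the ordered configuration space, which is classically a $K(\pi,1)$), so $\pi_2(\tilde Z_\lambda)=0$ and the long exact sequence collapses to the desired short exact sequence. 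Your version of this step is circular: you invoke injectivity of $\mathbb Z\to\pi_1(\Msnt_\lambda)$ by appealing to the first exact sequence, which is what you are trying to prove.

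Finally, your treatment of centrality of $\mathbb Z$ (``central in any case'') skips the actual point. In an extension $1\to\mathbb Z\times\mathcal P_\lambda\to\pi_1\to\Sigma_\lambda\to 1$ the factor $\mathbb Z$ is certainly centralised by $\mathcal P_\lambda$, but one must still check that conjugation by lifts of $\Sigma_\lambda$ fixes it. The paper does this by observing that the relevant $\Sigma_\lambda$-action on $\pi_1(\mathrm{GL}(V))$ is the restriction of the action of the connected group $\mathrm{GL}(V)$ on itself, hence trivial.
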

\begin{proof}
First notice that the variety $\tilde{Z}_{\lambda}$ is homotopy equivalent to $S_\lambda^n(\Sigma)$ since it is an affine space bundle over it, thus clearly $\pi_1(\tilde{Z}_\lambda) \cong \mathcal B_\lambda$. Now since the configuration space of $n$ distinct points in the complex plane is known to be a $K(\pi,1)$ (see for example \cite{D}), it follows that $S^n_{\lambda}(\Sigma)$ is a $K(\pi,1)$. The long exact sequence of a fibration then immediately shows that the fundamental group of $\tilde{\mathcal M}^{\text{sn}}_\lambda$ is an extension of $\mathcal B_\lambda$ by $\pi_1(\text{GL}(V))\cong \mathbb Z$. This yields the first of our exact sequences. 

Now consider the covering $p_\lambda$.  Since the total space is a product, the theory of covering spaces yields the second exact sequence. Moreover, the regular covering of $\tilde{\mathcal M}^{\text{sn}}_\lambda$ given by taking the universal cover of $\text{GL}(V)$ over the total space of $p_\lambda$ shows that $\mathcal P_\lambda$ is a normal subgroup of $\pi_1(\tilde{\mathcal M}^{\text{sn}}_\lambda)$. Thus since $\mathbb Z$ is clearly centralised by $\mathcal P_\lambda$ it suffices to show that $\mathbb Z$ is central in the quotient by $\mathcal P_\lambda$, which is isomorphic to the fundamental group of $\text{GL}(V)/\Sigma_\lambda$. But this is equivalent to showing that the action of $\Sigma_\lambda$ on $\pi_1(\text{GL}(V))$ is trivial which is immediate, because the action of $\Sigma_\lambda$ is the restriction of the action of the connected group $\text{GL}(V)$.

\end{proof}

\begin{remark}
\label{quotientofpi1}
Note that one can also describe the fundamental group of $\text{GL}(V)/\Sigma_\lambda$ explicitly, for example by considering it as a subgroup of the universal cover $\tilde{G}$ of $\text{GL}(V)$. Indeed $\tilde{G} = \{(g,t) \in \text{GL}(V)\times \mathbb C: \det(g) = \text{exp}(2\pi i t)\}$, and we may realise 
\[
\pi_1(\text{GL}(V)/\Sigma_\lambda) = \{(w,n) \in \Sigma_\lambda \times (1/2)\mathbb Z: \text{sgn}(w) = \text{exp}(2\pi i n)\}
\]
where $\text{sgn}$ is the usual sign function on $S_n$ restricted to $\Sigma_\lambda$. Here the subgroup $\mathbb Z\cong \pi_1(\text{GL}(V))$ is realized as $\{(1,n): n \in \mathbb Z\}$. For use in the next section, we also note here that, if we fix a generator $\gamma$ of $\pi_1(\text{GL}(V))$ and take any $q \in \mathbb C^\times$, say $q = \text{exp}(2\pi i \alpha)$, then $\pi_1(\tilde{\mathcal M}^{\text{sn}}_\lambda)$ has a one-dimensional representation $L_q$ on which $\gamma$ acts by $q$. Indeed the quotient $\pi_1(\text{GL}(V)/\Sigma_\lambda)$ obviously has such a representation, by taking the restriction of the representation $\Sigma_\lambda \times (1/2)\mathbb Z$ which sends $(w,n) \mapsto \text{exp}(2\pi i n\alpha)$. It is easy to check that in the case $\lambda = (1^n)$ the representations $L_q$ are given by the monodromy of the $\mathcal D$-module $s^c$, where $s$ is as in $\S$\ref{deformedHC}. We will use this fact in comparing our microlocal $KZ$-functors to the original $KZ$ functor.
\end{remark}

\section{Microlocal KZ functors}
\label{MKZfunctors}
\subsection{}
\label{categoryC_c}
Let $\mathcal O_c^{\text{sph}}$ be the category of representations of the spherical rational Cherednik algebra corresponding to the category $\mathcal O_c$ of modules for the rational Cherednik algebra, in other word, $\mathcal O_c^{\text{sph}}$ is the category of $e\mathcal H_c e$-modules on which $\mathbb C[\mathfrak h^*]^W_+$, the augmentation ideal of $\mathbb C[\mathfrak h^*]^W$, acts locally nilpotently. In this section we use our analysis of the variety $\Lambda$ to study $\mathcal O_c^{\text{sph}}$. 

It is well known that projective spaces are $\mathcal D$-affine. More precisely, if we assume\footnote{The $\frac{1}{n}$ factor comes from the fact that we use $\text{tr}_{|\text{Lie}(\mathcal Z)}$ as a basis vector for the dual space of $\text{Lie}(\mathcal Z)$.} that $c \notin \frac{1}{n}\mathbb Z_{<0}$ then the localization functor gives an equivalence between $\mathcal D_{\mathfrak X,c}$-modules and $\mathcal D_c(\mathfrak X)$-modules. Next recall the adjoint $\text{Loc}$ of the quantum Hamiltonian reduction functor, (in \cite{GG} the corresponding functor is denoted $^\top\mathbb H$). Let $\mathcal L_c$ be the  $\mathcal D_{\mathfrak X,c}$-module whose global sections are:
\[
\mathcal D_c(\mathfrak X)/\mathcal D_c(\mathfrak X)\cdot \mathfrak g_c,
\]
where $\mathfrak g_c$ is the image of $\mathfrak{sl}_n$ under the twisted quantized moment map $\tau_c$ associated to the action of $\text{PGL}(V)$ on $\mathfrak X$ via the isomorphism $\mathfrak{sl}_n \hookrightarrow \mathfrak{gl}_n \to \mathfrak{pgl}_n$. The twisted Harish-Chandra homomorphism $\Psi_c$ defined in Section \ref{deformed} yields an isomorphism between $e\mathcal H_c e$ and $\text{End}(\mathcal L_c)^{\text{op}}$, so that given a $e\mathcal H_c e$-module $M$ we may associate to it a $\mathcal D_{\mathfrak X,c}$-module
\[
\text{Loc}(M) = \mathcal L_c \otimes_{e\mathcal H_c e} M.
\]
This is the left adjoint to the functor $\mathbb H\colon \mathcal C_c \to \mathcal O_c^{\text{sph}}$ which sends $\mathcal F \mapsto \Gamma(\mathcal F)^{\text{SL}(V)}$, and it is known that the adjunction morphism $\mathbb H\circ \text{Loc}(M) \to M$ is an isomorphism for all $M \in \text{ob}(\mathcal O^{\text{sph}}_c)$. Recall that since $\text{SL}(V)$ commutes with the action of $\mathcal Z \subset \text{GL}(V)$ on $\mathfrak G^\circ$, we may consider the category of $\text{SL}(V)$-equivariant $D_{\mathfrak X,c}$-modules (see Appendix \ref{twistingstuff} for details and references). In \cite{GG}, Gan and Ginzburg establish the following result: let $\mathcal C_c$ be the full subcategory of the category of $(\mathcal D_{\mathfrak X,c}, \text{SL}(V))$-modules whose objects are have their characteristic cycle contained in $\Lambda$. 

\begin{Theorem}\cite[\S 7]{GG}. 
\label{GGtheorem}
Suppose that $c \notin \frac{1}{n}\mathbb Z_{<0}$. Then the functor $\text{Loc}$ yields an equivalence of categories between $\mathcal O_c^{\text{sph}}$ and $\mathcal C_c/\text{ker}(\mathbb H)$.
\end{Theorem}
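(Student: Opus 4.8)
The plan is to deduce the theorem from the adjunction $\text{Loc}\dashv\mathbb H$ together with the cited fact that the adjunction morphism $\mathbb H\circ\text{Loc}(M)\to M$ is an isomorphism, by the standard formalism of Serre quotient categories; on this reading the only point still needing genuine verification is the exactness of $\mathbb H$. First I would record that $\mathcal C_c$ is an abelian category: the condition that the characteristic cycle lie in the fixed closed conic Lagrangian $\Lambda$ is inherited by sub- and quotient modules and is stable under extension, so $\mathcal C_c$ is a Serre subcategory of the category of $(\mathcal D_{\mathfrak X,c},\text{SL}(V))$-modules. Next I would check that $\mathbb H=\Gamma(-)^{\text{SL}(V)}$ is exact: since $c\notin\frac{1}{n}\mathbb Z_{<0}$ the space $\mathfrak X$ is $\mathcal D$-affine, so $\Gamma$ is exact on $\mathcal D_{\mathfrak X,c}$-modules, and forming $\text{SL}(V)$-invariants of a rational representation is exact by reductivity of $\text{SL}(V)$; this is the only place the hypothesis on $c$ is used. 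Consequently $\ker(\mathbb H)$ is a Serre subcategory of $\mathcal C_c$, so the quotient category $\mathcal C_c/\ker(\mathbb H)$ and its exact quotient functor $Q$ are defined, and since $\mathbb H$ is exact and inverts every morphism that $Q$ inverts, it factors uniquely as $\mathbb H=\bar{\mathbb H}\circ Q$ for an exact functor $\bar{\mathbb H}\colon\mathcal C_c/\ker(\mathbb H)\to\mathcal O_c^{\text{sph}}$.

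It then remains to see that $G:=Q\circ\text{Loc}\colon\mathcal O_c^{\text{sph}}\to\mathcal C_c/\ker(\mathbb H)$ and $\bar{\mathbb H}$ are mutually quasi-inverse. One composite is immediate: $\bar{\mathbb H}\circ G=\mathbb H\circ\text{Loc}\cong\mathrm{id}_{\mathcal O_c^{\text{sph}}}$, which is exactly the cited invertibility of the unit of the adjunction. For the other composite, consider the counit $\varepsilon_{\mathcal F}\colon\text{Loc}\circ\mathbb H(\mathcal F)\to\mathcal F$, a morphism in $\mathcal C_c$ since $\text{Loc}$ takes values in $\mathcal C_c$. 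The triangle identity $\mathbb H(\varepsilon_{\mathcal F})\circ\eta_{\mathbb H(\mathcal F)}=\mathrm{id}$ together with the invertibility of the unit $\eta$ shows that $\mathbb H(\varepsilon_{\mathcal F})$ is an isomorphism, so by exactness of $\mathbb H$ the kernel and cokernel of $\varepsilon_{\mathcal F}$ are annihilated by $\mathbb H$ and hence lie in $\ker(\mathbb H)$; therefore $Q(\varepsilon)$ is a natural isomorphism $Q\circ\text{Loc}\circ\mathbb H\xrightarrow{\sim}Q$. As both of these functors $\mathcal C_c\to\mathcal C_c/\ker(\mathbb H)$ kill $\ker(\mathbb H)$, the universal property of the Serre quotient lets this isomorphism descend to a natural isomorphism $G\circ\bar{\mathbb H}\cong\mathrm{id}_{\mathcal C_c/\ker(\mathbb H)}$, which together with the previous isomorphism proves the theorem.

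The two inputs doing real work here are imported from the earlier constructions rather than proved in this argument. First, that $\text{Loc}(M)=\mathcal L_c\otimes_{e\mathcal H_c e}M$ really does lie in $\mathcal C_c$ for $M\in\mathcal O_c^{\text{sph}}$, i.e. the characteristic-cycle estimate $\mathrm{Ch}(\mathcal L_c\otimes_{e\mathcal H_c e}M)\subseteq\Lambda$; this uses the explicit description of $\Lambda$ together with the local nilpotence of $\mathbb C[\mathfrak h^*]^W_+$ defining $\mathcal O_c^{\text{sph}}$. Second, that the unit $\mathrm{id}\to\mathbb H\circ\text{Loc}$ is an isomorphism, which rests on the twisted Harish-Chandra isomorphism $\Psi_c$ of \S\ref{deformed} (identifying $e\mathcal H_c e$ with $\text{End}(\mathcal L_c)^{\text{op}}$) and on $\mathcal D$-affineness. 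If one had to establish these two facts from scratch the characteristic-cycle containment would be the main obstacle; granting them, as we may since they appear above and in \cite{GG}, the argument reduces to the purely formal Serre-quotient computation given here, the one point still requiring care being the exactness of $\mathbb H$ — equivalently the $\mathcal D$-affineness of $\mathfrak X$, which is exactly why the hypothesis $c\notin\frac{1}{n}\mathbb Z_{<0}$ is imposed.
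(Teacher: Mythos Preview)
The paper does not give its own proof of this statement: it is quoted as a result of Gan--Ginzburg \cite[\S 7]{GG}, and only the subsequent Remark supplies the one-line justification that $\mathbb H$ is exact (reductivity of $\text{SL}(V)$), hence $\ker(\mathbb H)$ is Serre. So there is no in-paper argument to compare your proposal against.

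That said, your argument is the correct formal one and matches what is done in \cite{GG}: once one knows (i) $\text{Loc}$ lands in $\mathcal C_c$, (ii) $\mathbb H$ is exact, and (iii) the unit $\text{id}\to\mathbb H\circ\text{Loc}$ is an isomorphism, the equivalence with the Serre quotient is pure category theory exactly as you wrote it. You have correctly isolated (i) and (iii) as the substantive imports and (ii) as the place where the hypothesis on $c$ enters via $\mathcal D$-affineness of projective space. One small point of care: you should note, as you implicitly do, that the factorisation $\mathbb H=\bar{\mathbb H}\circ Q$ exists because an exact functor killing a Serre subcategory always factors through the quotient; this is standard (Gabriel), but worth citing since it is the hinge of the argument.
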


\begin{remark}
Here $\text{ker}(\mathbb H)$ is the subcategory of objects in $\mathcal C_c$ which are annihilated by $\mathbb H$. Since $\text{SL}(V)$ is reductive, the functor of Hamiltonian reduction $\mathbb H$ is exact, so that $\text{ker}(\mathbb H)$ is a Serre subcategory. Kashiwara and Rouquier use the essentially same procedure, but then descend to the Hilbert scheme. 
\end{remark}

\subsection{}
We now wish to construct our microlocal analogues of the $KZ$-functors. For this we need the language of microdifferential operators. The most ``algebraic'' approach is to work with the ring $\widehat{\mathcal E}_X$ of formal microdifferential operators (see \cite[Chapter 7]{Kbook} where it is constructed using the calculus of total symbols) however there are a number of variants: the ring $\mathcal E_X$ of microdifferential operators with growth conditions, the sheaf $\mathcal E_X^{\infty}$ of infinite order operators, and the sheaf $\mathcal E_X^\mathbb R$, see for example \cite{K86} for a survey.

We briefly recall some basic properties (in the formal setting). Given $X$ a complex manifold, $\E_X$ is a $\mathbb Z$-filtered sheaf of rings on $T^*X$. If $\pi\colon T^*X \to X$ is the bundle map then $\E_X$ contains the sheaf $\pi^{-1}\mathcal D_X$ as a subring, and the quotients $\E_X(m)/\E_X(m-1) \cong \mathcal O_{T^*X}(m)$ where $\mathcal O_{T^*X}(m)$ denotes the sheaf of holomorphic functions on $T^*X$ homogeneous of degree $m$ along the fibres of $T^*X$. It is also known that $\E_X$ is flat over $\pi^{-1}\mathcal D_X$, and one can describe the characteristic cycle of a $\mathcal D_X$-module in terms of the associated $\E_X$-module: indeed for a coherent $\mathcal D_X$-module $\mathcal F$ we have: 
\begin{equation}
\label{EmodSS}
\text{Supp}(\E_X\otimes_{\pi^{-1}\mathcal D_X} \pi^{-1}\mathcal F) = \text{SS}(\mathcal F),
\end{equation}
where $\text{SS}$ denotes singular support (the multiplicities given by the characteristic cycle can also be recovered as discussed below). Note that when working with coherent $\mathcal D_X$-modules, we often consider a good filtration. In the context of $\E_X$-modules this corresponds to taking an $\E_X(0)$-lattice.

Suppose that $\mathcal F$ is a coherent $\mathcal D_X$-module, and $U$ is an open subset of $X$. It is well-known that if $\text{CC}(\mathcal F)_{|T^*U} \subseteq U$ then $\mathcal F_{|U}$ is in fact $\mathcal O_{U}$-coherent, and hence a vector bundle with connection, or local system, on $U$. The theory of regular holonomic $\E_X$-modules shows that this local constancy has a natural generalization to other Lagrangian submanifolds of $T^*X$. Let $\mathring{T}^*X$ denote the complement to the zero section in $T^*X$. If $\Omega$ is an open subset of $T^*X$, then we define a functor $\mu_\Omega$ from $\mathcal D_X$-modules to $\E_{X|\Omega}$-modules by setting $\mu_\Omega(\mathcal F) = (\E_X\otimes_{\pi^{-1}\mathcal D_X}\pi^{-1}(\mathcal F))_{|\Omega}$.

\begin{Theorem}
\label{Morselocalsystem}
Let $\Omega$ be an open subset of $\To^*X$ and let $\Lambda$ be a homogeneous smooth Lagrangian submanifold in $\Omega$ such that the projection from $\Lambda$ to $X$ has constant rank. Then there is an exact functor $\Phi_\Lambda$ from the category of holonomic $\E_X$-modules $\mathcal N$ with $\text{SS}(\mathcal N) \cap \Omega \subset \Lambda$ to the category of local systems on $\Lambda$.
\end{Theorem}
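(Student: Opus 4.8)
\noindent The plan is to define $\Phi_\Lambda(\mathcal N)=\mathcal{H}om_{\E_X}(\E_\Lambda,\mathcal N)$, where $\E_\Lambda$ is the canonical simple holonomic $\E_X$-module with support $\Lambda$, and then to show that for $\mathcal N$ as in the statement this sheaf-Hom is locally constant of finite rank on $\Lambda$ (a local system) and that $\mathcal N$ is recovered as $\E_\Lambda\otimes_{\mathbb C_\Lambda}\Phi_\Lambda(\mathcal N)$. First I would unwind what the hypotheses say about $\Lambda$: the tautological $1$-form $\alpha=\xi\,dx$ of $T^*X$ restricts to zero on any conic Lagrangian, and this together with the constant-rank hypothesis on $\pi|_\Lambda$ forces $\Lambda$ to be, locally on $\Lambda$, an open conic subset of the conormal bundle $T^*_YX$ of the submanifold $Y=\pi(\Lambda)$. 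On such a chart $\E_\Lambda$ is the microlocalization $\E_X\otimes_{\pi^{-1}\mathcal D_X}\pi^{-1}(\mathcal B_{Y|X})$ of the Sato module $\mathcal B_{Y|X}=\mathcal H^{\mathrm{codim}\,Y}_{[Y]}(\mathcal O_X)$; since the germ of $Y$ is determined by $\Lambda$ these local modules glue with no choices, so that $\E_\Lambda$ exists globally on $\Lambda$ --- it is precisely the constant-rank hypothesis that removes the Maslov-type ambiguity present for a general smooth conic Lagrangian. As for $\mathcal O_X$, or for the $\delta$-module along a submanifold, $\E_\Lambda$ is simple with $\mathcal{E}nd_{\E_X}(\E_\Lambda)=\mathbb C_\Lambda$.

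The key input, which I would quote rather than reprove, is the local structure theorem for holonomic microdifferential systems along a smooth Lagrangian (Kashiwara, building on Sato--Kawai--Kashiwara; see \cite[Chapter 7]{Kbook} and the survey \cite{K86}): if $\mathcal N$ is holonomic with $\text{SS}(\mathcal N)\cap\Omega\subset\Lambda$, then locally on $\Lambda$ one has $\mathcal N\cong\E_\Lambda^{\oplus r}$ for some integer $r$, and in particular $\mathcal{E}xt^1_{\E_X}(\E_\Lambda,\mathcal N)=0$ locally. Its proof proceeds by applying a quantized contact transformation to bring $\Lambda$ to the linear normal form above, then a Weierstrass-type division theorem for microdifferential operators to reduce the defining ideal to a de Rham--type system with locally constant coefficients, which one integrates. (Holonomicity together with the smoothness of $\Lambda$ makes $\mathcal N$ automatically regular along $\Lambda$, so that no separate regularity hypothesis is needed.)

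Granting this, the rest is formal. Put $\mathcal L=\mathcal{H}om_{\E_X}(\E_\Lambda,\mathcal N)$; it is supported on $\Lambda$, hence a sheaf there. Locally on $\Lambda$ the isomorphism $\mathcal N\cong\E_\Lambda^{\oplus r}$ and $\mathcal{E}nd_{\E_X}(\E_\Lambda)=\mathbb C_\Lambda$ give $\mathcal L\cong\mathbb C_\Lambda^{\oplus r}$, so $\mathcal L$ is a local system of rank $r$; and in the same local model the evaluation map $\E_\Lambda\otimes_{\mathbb C_\Lambda}\mathcal L\to\mathcal N$ is an isomorphism. Functoriality of $\Phi_\Lambda$ in $\mathcal N$ is clear, and exactness follows because $\mathcal{E}xt^1_{\E_X}(\E_\Lambda,-)$ vanishes on the category in question, so a short exact sequence of such $\E_X$-modules is carried to a short exact sequence of local systems. (One could equally use the solution variant $\mathcal{H}om_{\E_X}(\mathcal N,\E_\Lambda)$, producing the contragredient local system, as with the solution-complex description of the original $\KZ$ functor.)

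The main obstacle is the local structure theorem of the second paragraph; everything else is standard symplectic geometry of conic Lagrangians or diagram-chasing once that theorem and the canonical existence of $\E_\Lambda$ are in hand. The two points where the hypotheses genuinely enter are: the constant-rank condition, which makes $\Lambda$ locally a conormal bundle and thereby produces $\E_\Lambda$ canonically and globally on $\Lambda$; and holonomicity together with the smoothness of $\Lambda$, which underlie both the local triviality $\mathcal N\cong\E_\Lambda^{\oplus r}$ and the vanishing $\mathcal{E}xt^1_{\E_X}(\E_\Lambda,\E_\Lambda)=0$ that gives exactness.
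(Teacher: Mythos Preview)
Your approach is essentially the paper's own: both define the functor as $\mathcal{H}om$ from the canonical simple system along $\Lambda$ (your $\E_\Lambda$ is exactly the paper's $\mathcal C_{Z|X}=\mathcal E_X\otimes_{\mathcal D_X}\mathcal B_{Z|X}$, and the constant-rank hypothesis is used in the same way to produce the submanifold $Z$), and both deduce local constancy and exactness from the local triviality/Ext-vanishing along $\Lambda$. The one substantive difference is where the ``black box'' is placed: the paper does not argue the local structure theorem for $\E_X$-modules directly but instead passes to the larger ring $\mathcal E_X^{\mathbb R}$ and quotes Kashiwara--Kawai \cite[Theorem~1.3.1(i)]{KK1}, which already packages the Ext-vanishing and local constancy of $\mathcal{H}om_{\mathcal E_X}(\mathcal C_{Z|X},\mathcal N^{\mathbb R})$ for \emph{any} holonomic $\mathcal N$; your version stays with $\E_X$ and invokes the local triviality $\mathcal N\cong\E_\Lambda^{\oplus r}$ via quantized contact transformation to the conormal model. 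Both routes are standard, though your parenthetical that ``holonomicity together with the smoothness of $\Lambda$ makes $\mathcal N$ automatically regular along $\Lambda$'' is doing real work and is the point one should be most careful to justify (it is correct here, essentially because after the contact transformation one lands on a germ of flat connection); the paper sidesteps this entirely by extending scalars to $\mathcal E_X^{\mathbb R}$. The paper also records the alternative topological construction via vanishing cycles of the associated perverse sheaf, which you do not mention.
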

\begin{proof}
This can be seen topologically or analytically. One approach works with the ring $\mathcal E_X^\mathbb R = \mathcal{H}^n(\mu_{\Delta}(\mathcal O_{X\times X}^{0,n)}))$, where $\mu_{\Delta}$ is the microlocalization functor, $n=\dim(X)$, and $\mathcal O_{X\times X}^{(0,n)}$ is the sheaf of holomorphic forms on $X\times X$ which are $n$-forms with respect to the second variable. This is a sheaf of rings which contains $\mathcal E_X$ and is faithfully flat over it.

Now by the assumption on the projection from $\Lambda$ to $X$, we may write $\Lambda = T^*_ZX\cap \Omega$ for some submanifold $Z$ of $X$. Then we have the simple holonomic system $\mathcal C_{Z|X} = \mathcal E_X\otimes_{\mathcal D_X} \mathcal B_{Z|X}$ along $\Lambda$. Now for any holonomic $\mathcal E_X$-module $\mathcal N$ by\footnote{The proof of Theorem $1.3.1$ in that paper is given in \cite{K83}.} \cite[Theorem 1.3.1(i)]{KK1} we have $\mathcal Ext^j_{\mathcal E_X}(\mathcal C_{Z|X},\mathcal N^\mathbb R)=0$ for $j \neq 0$ and  moreover $\mathcal Hom_{\mathcal E_X}(\mathcal C_{Z|X},\mathcal N^\mathbb R)$ is a locally constant  sheaf on $\Lambda$. Thus the map
\[
\mathcal M \mapsto \mathcal Hom_{\mathcal E_X}(\mathcal C_{Z|X}, \mathcal N^{\mathbb R}),
\]
where $\mathcal N = (\mathcal E_X\otimes_{\mathcal D_X} \mathcal M)_{|\Omega}$, yields an exact functor to local systems. 

Alternatively, via the Riemann-Hilbert correspondence, we may work topologically with the perverse sheaf $P$ determined by $\mathcal M$. In this situation the functor to local systems is the vanishing cycles functor, and its construction is described in \cite[\S 5]{MV1}. A more detailed discussion, giving a number of different approaches in the topological setting, is given in \cite[\S 4]{GMV}.
\end{proof}

\begin{remark}
Note that \cite[Theorem 1.3.1(i)]{KK1} also states that there is a canonical isomorphism:
\[
\mathcal C_{Z|X}^\mathbb R \otimes_\mathbb C \mathcal{H}om_{\mathcal E_X}(\mathcal C_{Z|X}, \mathcal N^\mathbb R) \to \mathcal N^\mathbb R.
\]
where $\mathcal N^{\mathbb R}$ denotes the extension of $\mathcal N$ to $\mathcal E^{\mathbb R}_X$. If $\Lambda =\gamma^{-1}\gamma(\Lambda)$ where $\gamma \colon \To X \to \mathbb P^*X$ is the map to the projectivised cotangent bundle, then by \cite[Theorem 1.3.1 (ii)]{KK1} the sheaf $\mathcal M^\infty$ (the extension of $\mathcal M$ to the ring $\mathcal E^{\infty}_X$ of infinite order microdifferential operators) is completely determined by $\mathcal M^\mathbb R$, and if $\mathcal M$ has regular singularities, then by \cite[Theorem 5.2.1]{KK1} it is determined by $\mathcal M^\infty$. Thus in this case there is in fact an equivalence between regular singularities $\mathcal E_{X|\Omega}$-modules supported on $\Lambda$ and local systems on $\Lambda$. In general however, regular singularities $\mathcal E_X$-modules supported on a smooth homogeneous Lagrangian are equivalent to twisted local systems on $\Lambda$ (see \cite[\S 10]{K86} for more details).

Finally, using for example the explicit description of the functor $\Phi_\Lambda$ as homomorphisms from $\mathcal C_{Z|X}$ we see that if our holonomic modules are $(H,\lambda)$-equivariant for some group $H$, then $\Phi_\Lambda$ restricts to a functor taking values in the category of $(H,\lambda)$-twisted local systems on $\Lambda$.
\end{remark}

\begin{remark}
The condition that $U$ is an open in $\mathring{T}^*X$ is not particularly restrictive: if we start with a $\E_X$-module $\mathcal F$ we may replace $X$ by $Z= X\times \mathbb C$ and $\mathcal F$ by $\mathcal F \boxtimes \delta_{0}$ where $\delta_0$ denotes the module generated by the $\delta$-function at $0 \in \mathbb C$. Since the conormal bundle of $X\times \{0\}$ in $Z$ is now $X \times T_0^*\mathbb C$ we have effectively moved the support of $\mathcal F$ off the zero section.
\end{remark}

\subsection{}
\label{mKZdef}
By the discussion in $\S$\ref{DmodsonG} we may view the category $\mathcal C_c$ of $\S$\ref{categoryC_c} as the abelian subcategory of $(\GL(V),c.\text{tr})$-equivariant module on $\mathfrak G^\circ$ whose singular support is contained in $\Msn$ (the preimage of $\Lambda$). Given a component $\Msn_\lambda$ of $\Msn$ we may choose an open set $\Omega_\lambda$ in $T^*\mathfrak G^\circ$ such that $\Omega_\lambda\cap \Msn = \tilde{\mathcal M}^{\text{sn}}_\lambda$.
From our explicit description of $\tilde{\mathcal M}^{\text{sn}}_\lambda$ in \S\ref{smoothpieces} it immediately follows that the projection from $\tilde{\mathcal M}^{\text{sn}}_\lambda$ to $\mathfrak G^\circ$ has constant rank (indeed the fibre of the projection from $\To\mathfrak G^\circ$ over a point $(X,v) \in \mathfrak G^\circ$ is just the space of nilpotent matrices which are regular in each generalized eigenspace of $X$, thus in fact the restriction of the projection is a fibre bundle with smooth fibres). Thus by composing the functor of Theorem \ref{Morselocalsystem} with the functor $\mu_{\Omega_\lambda}$, we obtain an exact functor
\[
\mathcal{KZ}_\lambda \colon \mathcal C_c \to \mathcal{LS}(\tilde{\mathcal M}^{\text{sn}}_\lambda).
\]
where $\mathcal{LS}(\tilde{\mathcal M}^{\text{sn}}_\lambda)$ denotes the category of local systems on $\tilde{\mathcal M}^{\text{sn}}_\lambda$. Note that  the Lagrangian $\tilde{\mathcal M}^\text{sn}_{(1^n)}$ is a subset of $\mathfrak G^\circ$, so that in that case we only need the classical equivalence between vector bundles with flat connection and local systems.  

\begin{definition}
By choosing a base point in $\tilde{\mathcal M}^{\text{sn}}_\lambda$ we may think of $\mathcal{KZ}_\lambda$ as a functor to the category $\text{Rep}(\pi_1(\tilde{\mathcal M}^{\text{sn}}_\lambda))$ of finite-dimensional representations of $\pi_1(\tilde{\mathcal M}^{\text{sn}}_\lambda)$. Moreover, since we are working with $(\text{GL}(V),c.\text{tr})$-equivariant modules $\mathcal F$, the local system $\mathcal{KZ}_\lambda(\mathcal F)$ must also be. Now by Proposition \ref{microlocalpi1} we see that $\pi_1(\tilde{\mathcal M}^\text{sn}_\lambda)$ is an extension of $\mathcal B_\lambda$ by $\pi_1(\text{GL}(V)) \cong \mathbb Z$. The twisted equivariance shows that the subgroup $\mathbb Z = \pi_1(\text{GL}(V))$ must act by $q=\text{exp}(2\pi i nc)$. Thus it follows that the tensor product $L_{q^{-1}}\otimes \mathcal{KZ}_\lambda(\mathcal F)$ descends to a representation of $\mathcal B_\lambda$, where $L_{q^{-1}}$ is the one-dimensional representation constructed in Remark \ref{quotientofpi1}. Let $KZ_\lambda(\mathcal F)$ denote  this representation of $\mathcal B_\lambda$, an object of the category of finite-dimensional $\mathcal B_\lambda$-representations $\text{Rep}(\mathcal B_\lambda)$. By composing with the functor $\text{Loc}$ we also obtain a functor from $\mathcal O^{\text{sph}}_c$ to $\text{Rep}(\mathcal B_\lambda)$. Since it should be clear from context, we will abuse notation slightly and again write $KZ_\lambda$ for this composition of functors\footnote{Of course one also obtains a functor from $\mathcal O_c$ to $\text{Rep}(\mathcal B_\lambda)$ by composing with the functor from $\mathcal O_c$ to $\mathcal O^\text{sph}_c$ sending $M \mapsto eM$.}. 
\end{definition}

\begin{remark}
Since the representation $L_{q^{-1}}$ is not canonically defined, it is perhaps more natural to have the functors $KZ_{\lambda}$ take values in the category of twisted local systems. However, in order to connect more directly with the original $KZ$ functor we prefer here to descend to ordinary local systems.
\end{remark}

For use in the next section, we also wish to note the connection between our functors and the characteristic cycles for modules in $\mathcal O^\text{sph}_c$ defined by Gan-Ginzburg \cite[\S 7.5]{GG} and Gordon-Stafford \cite[\S 2.7]{GS} (which are shown to be equivalent in \cite[\S 1.10]{GGS}). If $M$ is a module in $\mathcal O^\text{sph}_c$, then the definition in \cite{GG} uses the intersection of $\text{CC}(\text{Loc}(M))$ with the stable locus $\text{U}$, and takes its quotient by $\GL(V)$ to obtain Lagrangian cycle in $\text{Hilb}^n(\mathbb C^2)$. By equation (\ref{EmodSS}), or more precisely by its refinement which counts multiplicities, this characteristic cycle is obtained by taking the dimension of our local systems $KZ_\lambda(M)$. 

\subsection{}
\label{microsupportH}

We now relate our functors to the results in \cite{KR}. Recall\footnote{We refer the reader to their paper for details and terminology such as $\mathscr W$-algebras and $F$-structures, but note we summarise their work using our notation not theirs.} that they construct a $\mathscr W$-algebra on $\text{Hilb}^n(\mathbb C^2)$ by ``symplectic reduction'' (see also \ref{Walgconstruction}). Starting with the $\mathcal D$-module $\mathcal L_c$ of $\S$\ref{categoryC_c}, they extend coefficients to obtain a module (also denoted $\mathcal L_c$) for $\mathscr W_{T^*\mathfrak G}$ the standard $\mathscr W$-algebra on $T^*\mathfrak G$, which is $(\GL(v),c.\text{tr})$-equivariant and is supported on $\mathcal M$. Since $\GL(V)$ acts freely on $\text{U}$ the stable locus in $T^*\mathfrak G$, once we restrict to $\text{U}$, we may apply the reduction procedure given by Proposition 2.8 of \cite{KR} to obtain a $\mathscr W$-algebra, denoted $\mathscr A_c$, on $\mathcal M^s/G \cong \text{Hilb}^n(\mathbb C^2)$. Let $\tilde{\mathscr A}_c$ denote the $\mathscr W$-algebra obtained from $\mathscr A_c$ by extending the coefficients $\mathbb C((\hbar))$ by a square root of $\hbar$. Then there is a natural functor from $e\mathcal H_c e$-modules to $\tAc$-modules with an $F$-action given by:
\[
\text{Loc}_{\tAc}(M) = \tAc \otimes_{e\mathcal H_c e} M.
\]
This uses the deformed Harish-Chandra homomorphism to construct a natural action of $e\mathcal H_c e$ on $\tAc$ (\textit{i.e.} the action is induced by the $e\mathcal H_c e$-action on the $\mathcal D_{\mathfrak G}$-module $\mathcal L_c$).

By \cite[Proposition 2.8]{KR}, the construction of the $\mathscr W$-algebra on $\text{Hilb}^n(\mathbb C^2)$ has the property that its modules are equivalent to $(G,c.\text{tr})$-equivariant $\mathscr W_{U}$-modules with an $F$-action that are supported on $\mathcal M^s$. Thus comparing the definitions, we see that given a module in category $\mathcal O_c$, the $\mathscr W_{\text{U}}$-module we obtain by pulling back $\text{Loc}_{\tAc}(M)$ to $\text{U}$ is exactly $\mathscr W_{\text{U}}\otimes_{\E_{\text{U}}}\mu_{\text{U}}(\text{Loc}(M))$. Now both $\text{Loc}$ and $\text{Loc}_{\tAc}$ have left adjoints $\mathbb H$ and $\mathbb H_{\tAc}$ given by taking twisted invariant global sections, and moreover, since by \cite[Theorem 1.5]{GG} and \cite[Lemma 4.7]{KR} there are filtration preserving isomorphisms $\text{End}(\mathcal L_c)^{\text{op}} \cong e\mathcal H_c e \cong \text{End}_{\text{Mod}^{\text{good}}_F(\tAc)}(\tAc)^{\text{op}}$ it follows we must have $\mathbb H_{\tAc}(\mathscr W_{\text{U}}\otimes_{\E_{\text{U}}}\text{Loc}(M)) = \mathbb H(\text{Loc}(M))$.

\noindent
Now let 
\[
\mathscr Y = \{a/b: 2\leq b \leq n, a<0,  \text{ g.c.d.}(a,b)=1\}.
\]
Then Theorem $4.9$ of \cite{KR} shows that if $c \notin \mathscr Y$ the functor $\mathbb H_{\tAc}$ gives an equivalence between the category of coherent good $\tilde{\mathscr A}_c$-modules with an $F$-structure and the category of finitely generated $e\mathcal H_c e$-modules, with quasi-inverse $\text{Loc}_{\tAc}$. But now if $\mathcal F$ is an object in $\mathcal C_c$, then the natural map $\mathcal F \to \text{Loc}(\mathbb H(\mathcal F))$ must be an isomorphism over $\text{U}$ (since $\mathscr W_X$ is faithfully flat over $\E_X$). Note that this implies that the functors $\mathcal{KZ}_\lambda$ which we have defined vanish on $\text{ker}(\mathbb H)$, and so descend to the quotient category $\mathcal C_c /\text{ker}(\mathbb H)$.

\subsection{}
The case $\lambda = (1^n)$ corresponds to the original $KZ$-functor of \cite{GGOR}, as we now show. Let $\mathfrak G^{\text{rss}}$ be the set of pairs $(X,v)\in \mathfrak G$ such that $X$ is regular semisimple and $v$ is a cyclic vector for $X$. This is a smooth locally closed subvariety of $\mathfrak G^\circ$, and by the characteristic cycle bound, if $\mathcal F \in \mathcal C_c$ then, viewing $\mathcal F$ as before as a $(\text{GL}(V),c.\text{tr})$-equivariant $\mathcal D_{\mathfrak G^\circ}$-module, its restriction $\mathcal F_{|\mathfrak G^{\text{rss}}}$ must be a local system. As in Lemma \ref{microlocalpi1} we see that that $\pi_1(\mathfrak G^{\text{rss}})$ is an extension of $\mathcal B_n$ by $\mathbb Z \cong \pi_1(\text{GL}(V)$, where $B_n$ is the braid group on $n$ strands, so that the functor $KZ_{(1^n)}$ given by descending the local system $\mathcal F_{|\mathfrak G^{\text{rss}}}$ to $\tilde{Z}_{(1^n)}$.

 \begin{prop}
 \label{KZcheck}
 The functors $KZ$ and $KZ_{(1^n)}$ are naturally isomorphic. 
 \end{prop}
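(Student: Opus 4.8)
The plan is to identify both functors, at the level of the underlying local systems, with the restriction of a single $\mathcal D$-module to a variety homotopy equivalent to $\hreg/W$, and then to reconcile the two normalisations using the discriminant twist built into the deformed Harish-Chandra homomorphism.

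First I would make $KZ_{(1^n)}$ completely explicit. For $\lambda=(1^n)$ every part of $\lambda$ equals $1$, so on each eigenspace of the semisimple part of $X$ the endomorphism $Y$ is nilpotent on a line, hence zero; thus $\Msnt_{(1^n)}$ is the set of triples $(X,0,v)$ with $X$ regular semisimple and $v$ cyclic, i.e. it is $\mathfrak G^{\mathrm{rss}}$ sitting inside the zero section of $T^*\mathfrak G^{\circ}$. Consequently the microdifferential step in the definition of $\mathcal{KZ}_{(1^n)}$ is vacuous: $\mathcal{KZ}_{(1^n)}(\mathcal F)$ is just the restriction of the $(\GL(V),c.\text{tr})$-equivariant $\mathcal D_{\mathfrak G^{\circ}}$-module $\mathcal F=\text{Loc}(eM)$ to $\mathfrak G^{\mathrm{rss}}$, which by the characteristic-cycle bound is a local system. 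Since $\GL(V)$ acts freely on $\mathfrak G^{\mathrm{rss}}$ with quotient $\hreg/W$ (via the eigenvalues of $X$), $KZ_{(1^n)}(M)$ is, by construction, the $\mathcal B_n$-representation obtained from $L_{q^{-1}}\otimes\bigl(\text{Loc}(eM)_{|\mathfrak G^{\mathrm{rss}}}\bigr)$, with $q=e^{2\pi inc}$, once this twist kills the monodromy around the generator $\gamma$ of $\pi_1(\GL(V))$.

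Next I would compute the local system $\text{Loc}(eM)_{|\mathfrak G^{\mathrm{rss}}}$. The key point is that the function $s$ of $\S$\ref{deformedHC} is a $\GL(V)$-semi-invariant of weight $\det$, so $s^{c}$, viewed as a rank-one $\mathcal D$-module on $\mathfrak G^{\mathrm{reg}}$, carries exactly the $(\GL(V),c.\text{tr})$-equivariant structure; hence $s^{-c}\cdot\text{Loc}(eM)$ is honestly $\GL(V)$-equivariant (untwisted) on $\mathfrak G^{\mathrm{reg}}$ and therefore descends along $\mathfrak G^{\mathrm{rss}}\to\hreg/W$ to an ordinary $\mathcal D(\hreg/W)$-module. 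I would then invoke the construction of the twisted radial parts isomorphism $\mathfrak R_c$ and its discriminant twist $\Psi_c=m_\delta^{-1}\circ\mathfrak R_c\circ m_\delta$ (with $\mathfrak R_c(D)=s^cDs^{-c}$), together with the fact that after localisation to $\hreg$ the Dunkl map $\Theta_c$ is an isomorphism and that $\text{Loc}$ is the left adjoint of $\mathbb H$ with $\mathbb H\circ\text{Loc}\cong\mathrm{id}$, to conclude that this descent is precisely $(eM)_{|\hreg}$; indeed this is just a restatement over $\hreg$ of the definition $\Phi_c=\Theta_c^{-1}\circ\Psi_c$. Every step here is functorial in $M$, so this yields a natural isomorphism of $\mathcal D(\hreg/W)$-modules, hence of the associated local systems on $\hreg/W$.

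Finally I would match this with $KZ$ and reconcile the twists. By definition $KZ(M)$ is the local system on $\hreg/W$ obtained by descending the $W$-equivariant $\mathcal D_{\hreg}$-module $M_{|\hreg}$ coming from the Dunkl isomorphism; since applying $e(-)$ coincides with taking $W$-invariants of the pushforward along the free quotient $\hreg\to\hreg/W$, this descended local system is exactly $(eM)_{|\hreg}$ viewed downstairs — the object produced above. Pulling back to $\mathfrak G^{\mathrm{rss}}$, the preceding paragraph says $\text{Loc}(eM)_{|\mathfrak G^{\mathrm{rss}}}\cong(\text{monodromy of }s^{c})\otimes\pi^{*}KZ(M)$, and by the last observation of Remark \ref{quotientofpi1} the monodromy of the $\mathcal D$-module $s^{c}$ is precisely $L_q$; hence $L_{q^{-1}}\otimes\text{Loc}(eM)_{|\mathfrak G^{\mathrm{rss}}}\cong\pi^{*}KZ(M)$, which descends to $KZ(M)$. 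Taking the de Rham convention on both sides gives the desired natural isomorphism $KZ\cong KZ_{(1^n)}$ of functors to $\text{Rep}(\mathcal B_n)$. I expect the only genuine difficulty to lie in the bookkeeping of the middle paragraph: keeping straight the shift between the parameter in $\mathfrak R_c$ (whose image is the Dunkl image of $e\mathcal H_{c-1}e$) and in $\Psi_c$ (whose image is that of $e\mathcal H_ce$), the factor $n$ in $q=e^{2\pi inc}$ coming from the normalisation $\text{tr}_{|\mathrm{Lie}(\mathcal Z)}$, and the passage between the $(\GL(V),c.\text{tr})$-equivariant and $\text{SL}(V)$-equivariant pictures — so that the rank-one local system $s^{c}$ really coincides with $L_q$ on the nose and not merely up to a residual character of $\mathcal B_n$.
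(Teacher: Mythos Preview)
Your proposal is correct and follows essentially the same route as the paper: identify $\Msnt_{(1^n)}$ with $\mathfrak G^{\mathrm{rss}}$ inside the zero section so that the microlocal step is vacuous, then use the radial-parts description of $\Psi_c$ (conjugation by $s^c$ followed by the discriminant twist $m_\delta$) to match the restriction of $\text{Loc}(eM)$ with the pull-back of the local system $N$ on $\hreg/W$ underlying $KZ(M)$, with the $s^c$-monodromy accounting for the $L_{q^{-1}}$ twist via Remark~\ref{quotientofpi1}. The only place where your write-up is slightly looser than the paper is the sentence ``applying $e(-)$ coincides with taking $W$-invariants of the pushforward \ldots\ this descended local system is exactly $(eM)_{|\hreg}$'': the paper is explicit that the descended module $N$ and $eM_{|\hreg}$ differ by conjugation by the Jacobian $\delta$, and this $\delta$ is precisely what is absorbed by the $m_\delta$-twist in $\Psi_c$ on the other side of the comparison; you have the ingredient (you write out $\Psi_c=m_\delta^{-1}\circ\mathfrak R_c\circ m_\delta$) but you should say in one line that these two $\delta$'s cancel rather than asserting the identification without it.
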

\begin{proof}
This follows immediately from the construction of the functors. Consider the diagram:

\xymatrix{ & & & &  \mathfrak G^{\text{reg}} \ar[r]^\pi & \mathfrak h/W &  \ar[l]_p \mathfrak h \\
& & & &  \mathfrak G^{\text{rss}} \ar[u]^j \ar[r]^{\pi_\text{r}} & \hreg/W  \ar[u] &  \ar[l]_{p_\text{r}} \hreg \ar[u]_i}

\noindent
The construction of the $KZ$-functor uses the right-hand square in the diagram. Given a module $M$ in category $\mathcal O_0$, we may localize to $\hreg$, and via the Dunkl homomorphism view it as a $W$-equivariant $\mathcal D$-module on $\hreg$ (which we will again write as $M$). As such there is a $\mathcal D$-module $N$ which is a vector bundle with connection on $\hreg$ such that $M_{|\hreg} \cong p^*(N)$. 

The global sections of $N$ become isomorphic to $eM_{|\hreg}$ as an $\mathcal D_{\hreg}^W$-module after conjugation by $\delta$: over $\hreg$ the map to $\mathfrak h/W$ is a Galois covering with group $W$, so locally $M$ is just $|W|$ copies of $\mathcal O_\mathfrak h$, and the sections of $N$ then correspond to $eM$, but the action of differential operators need to be conjugated by the Jacobian of the quotient map, which in this case is $\delta$ (c.f. \cite{HS} for a discussion of this in the context of push-forward of $\mathcal D$-modules -- this is the reason we must twist by the action of $\delta$ in the definition of $\Psi_c$).

We now consider the other side of the diagram. The construction of the $\mathcal D$-module, $\mathcal M$ say, in $\mathcal C_c$ from $M$ proceeds via the isomorphism $\Psi_c$ from $e\mathcal H_c e$ to $(\mathcal D_c(\mathfrak X)/\mathcal D_c(\mathfrak X)\Lieg_c)^{\text{SL}(V)}$, so that
\[
\mathcal M = \mathcal D_{\mathfrak X,c}\otimes_{\Psi_c} eM.
\]
On $\mathfrak G^{\text{rss}}$ the map $\Psi_c$ discussed in section \ref{deformed} gives an isomorphism between this algebra and $\mathcal D(\hreg)^W$, where the conjugation by $s^c$ used in the deformed Harish-Chandra homomorphism corresponds the tensor product by $L_{q^{-1}}$ in our construction of $KZ_{(1^n)}$. Then the above discussion shows that we have $j^*(\mathcal M) \cong \pi_\text{r}^*(N)$, and hence the monodromy measured by $KZ_{(1^n)}$ is exactly that measured by $KZ$. \end{proof}

\begin{remark}
A fundamental conjecture due to Kashiwara (a proof of which has recently been announced by Kashiwara and Vilonen \cite{KV}) states that a regular holonomic $\mathcal E_X$-module extends uniquely beyond an analytic subset of codimension greater or equal to three, which means that the structure of the stack of such $\mathcal E_X$-modules is captured in codimension two. Loosely,  the ``codimension zero'' information is given by a collection of local systems, and the codimension $1$ information provides certain ``glue'' between them, with codimension two imposing additional constraints on objects. This implies that one can understand regular holonomic $\mathcal D_X$- modules whose singular support lies in a certain Lagrangian $\Lambda$ via the geometry of $\Lambda$ up to codimension two singularities, and in the topological context this idea has been studied in \cite{GMV}. One motivation for this paper came from a desire to get such a ``microlocal'' understanding of the category $\mathcal C_c$ (note that it is shown in \cite[\S 5.3]{GG} that the $\mathcal D$-modules in the category $\mathcal C_c$ are regular holonomic -- indeed this follows by a standard argument from the finiteness of the number of orbits of $\text{PGL}(V)$ on $\mathcal N \times \PP(V)$). Note that the topological case and its relation to $\mathcal E_X$-modules has also been studied in \cite{W1}, \cite{W2}.
\end{remark}

\section{Characteristic cycles of standard modules.}
\label{CCcomputation}
\subsection{}
Having defined microlocal KZ functors, we now attempt a first study of them. For this, as for the original KZ functor, we use the standard modules, whose construction we now recall. These modules are analogous to Verma modules in Lie theory: they are defined for any value of $c$, and are generically simple. 

Note that $\mathbb C[\mathfrak h^*]\rtimes \mathbb C[W]$ is a subalgebra of $\mathcal H_c$, and it has a natural evaluation homomorphism $\text{ev} \colon \mathbb C[\mathfrak h^*]\rtimes \mathbb C[W] \to \mathbb C[W]$ given by $P\otimes w \mapsto P(0).w$. Thus pull-back via $\text{ev}$ allows us to lift modules for $\mathbb C[W]$ to modules for $\mathbb C[\mathfrak h^*]\rtimes \mathbb C[W]$. Given $\tau$ a representation of $W$ we define a standard module\footnote{This construction of course works for any complex reflection group, not only symmetric groups.} $M(\tau)$ by inducing the pull-back of $\tau$ to $\mathcal H_c$:
\[
M(\tau) = \mathcal H_C \otimes_{\mathbb C[\mathfrak h^*]\rtimes \mathbb C[W]} \text{ev}^*(\tau).
\]
If $\tau$ is an irreducible representation indexed by a partition $\lambda$, then we write $M_\lambda$ for $M(\tau)$. 

The standard modules have a unique simple quotient $L_\lambda$, and by the general machinery of highest weight categories one can also establish a Brauer-Bernstein-Gelfand type reciprocity formula relating the multiplicity of simple modules in standards to the multiplicity of standard modules in projective modules, see \cite{Gu}. It follows easily from this that $\mathcal O_c$ is semisimple if and only if the standard modules are simple. We wish to calculate the dimensions of the local systems given by the functors $\text{KZ}_\mu$ on these modules, or in other words (see $\S$\ref{mKZdef}) we wish to calculate the multiplicities of the characteristic cycle of these modules on the corresponding component of $\Lambda$. 

\subsection{}
We now wish to show that the characteristic cycle of a standard module is independent of the parameter $c$. To do this we use the ``Harish-Chandra'' module $F_{HC}$ defined by Gan and Ginzburg \cite[\S 7.4]{GG}. This is the $\mathcal D_c(\mathfrak X)$-module defined by 
\[
F_{HC} = \mathcal D(\mathfrak X,c)/(\mathcal D(\mathfrak X,c)\mathfrak g_c + \mathcal D(\mathfrak X,c)\mathfrak Z_+),
\]
where $\mathfrak Z_+$ denote augmentation ideal of $\mathcal Z$ the algebra of $\text{ad}(\Lieg)$-invariant constant coefficient differential operators on $\mathfrak g$.  We first show that the characteristic cycle of $F_{HC}$ is independent of $c$.

\begin{lemma}
The characteristic cycle of $F_{HC}$ is given by the ideal $I = \langle \sigma(x), \sigma(z): x \in \Lieg, z \in \mathcal Z_+\rangle$, where $\sigma$ denotes the symbol map, that is, the generators $\Lieg$ and $\mathcal Z_+$ are involutive. Thus $\text{CC}(F_{HC})$ is independent of $c$. 
\end{lemma}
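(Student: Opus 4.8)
The plan is to show that the defining generators of $F_{HC}$ form an involutive (i.e. regular) sequence in the graded sense, so that the characteristic variety is cut out by their symbols with the expected multiplicity, and that this symbol ideal visibly does not depend on $c$. Concretely, $\mathcal D_c(\mathfrak X)$ carries the order filtration whose associated graded is $\mathbb C[T^*\mathfrak X]$ (this is where the $c$-dependence disappears: twisting changes the ring but not its symbol calculus). Under this filtration, $\sigma(\mathfrak g_c)$ is the ideal generated by the components of the moment map $\mu\colon T^*\mathfrak X \to \mathfrak{sl}(V)^*$ for the $\mathrm{PGL}(V)$-action, and $\sigma(\mathcal Z_+)$ is the ideal generated by the invariant functions $\mathrm{Tr}(Y^m)$, $1 \le m \le n$ (the symbols of the constant-coefficient invariant operators), neither of which involves $c$. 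So the content of the lemma is the purely classical statement that these two families of functions on $T^*\mathfrak X$ cut out a complete intersection (more precisely, that the natural surjection $\mathrm{gr}(F_{HC}) \twoheadrightarrow \mathbb C[T^*\mathfrak X]/I$ is an isomorphism), from which $\mathrm{CC}(F_{HC})$ is read off independently of $c$.

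The key steps, in order: first I would set up the good filtration on $F_{HC}$ induced from the order filtration on $\mathcal D_c(\mathfrak X)$ and record the standard surjection $\mathbb C[T^*\mathfrak X]/I \twoheadrightarrow \mathrm{gr}(F_{HC})$, where $I = \langle \sigma(x), \sigma(z) : x \in \mathfrak g_c,\ z \in \mathcal Z_+\rangle$; it suffices to prove this is an isomorphism, equivalently that the sequence is involutive. Second, I would identify $V(I) \subseteq T^*\mathfrak X$ explicitly: $\mu^{-1}(0)$ is (the $\mathbb C^\times$-quotient of) the almost-commuting variety $\mathcal M \cap T^*\mathfrak G^\circ$ — this is exactly the computation recalled in $\S$\ref{DmodsonG} — and imposing $\mathrm{Tr}(Y^m) = 0$ for all $m$ forces $Y$ nilpotent, so $V(I) = \Lambda$. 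Third — and this is the crucial dimension count — since $\Lambda$ is Lagrangian in $T^*\mathfrak X$ by the theorem of \cite{GG} recalled above, $\dim V(I) = \dim \mathfrak X = \dim T^*\mathfrak X - \dim \mathfrak X$; comparing with the number of generators ($\dim \mathfrak{sl}(V) + n$ functions cutting $T^*\mathfrak X$, which has dimension $2\dim\mathfrak X$), one checks the codimension of $V(I)$ equals the number of equations, so $I$ is generated by a regular sequence and $\mathbb C[T^*\mathfrak X]/I$ is Cohen–Macaulay of the right dimension. Fourth, a regular sequence in a polynomial ring is automatically involutive (the Koszul complex is exact, so there are no unexpected relations among symbols), giving $\mathrm{gr}(F_{HC}) \cong \mathbb C[T^*\mathfrak X]/I$; since $I$ is manifestly $c$-free, so is $\mathrm{CC}(F_{HC})$.

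I expect the main obstacle to be the third step: verifying that the $\dim\mathfrak{sl}(V) + n$ symbol equations actually cut out $\Lambda$ in the expected codimension, i.e. that there is no "excess" drop forcing extra components or higher-dimensional pieces. The Lagrangian property from \cite{GG} pins down $\dim\Lambda$, but one must be careful that $V(I)$ is not strictly larger than $\Lambda$ as a set (the reverse inclusion $V(I) \subseteq \Lambda$ is the easy direction via the explicit description; the point is equidimensionality) and that the scheme structure defined by $I$ has no embedded components — here one leans on the fact that $\Lambda$, being Lagrangian and equidimensional, forces the complete-intersection count to be sharp. An alternative route that sidesteps some of this is to invoke that $F_{HC}$ is holonomic with $\mathrm{SS}(F_{HC}) \subseteq \Lambda$ (which follows since $F_{HC} \in \mathcal C_c$) together with a flatness/specialization argument: realize $F_{HC}$ as a fibre of a family over the line of parameters $c$ and use semicontinuity of characteristic cycles plus constancy of total multiplicity to conclude the cycle is locally constant, hence constant, in $c$. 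Either way the heart of the matter is the classical intersection-theoretic input on $T^*\mathfrak X$, and once that is in hand the $c$-independence is formal.
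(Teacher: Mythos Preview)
Your overall strategy coincides with the paper's: both arguments reduce to checking that the symbol ideal $I$ cuts out a variety of the expected codimension (your step three is exactly the paper's condition (1), and both appeal to the equidimensionality of $\Lambda$ established in \cite{GG}). Where you diverge is step four, and here there is a genuine gap. Your claim that ``a regular sequence in a polynomial ring is automatically involutive'' conflates two distinct notions. Regularity of the \emph{symbols} $\sigma(f_1),\dots,\sigma(f_m)$ in $\mathbb C[T^*\mathfrak X]$ controls the ideal $I$ they generate, but it does \emph{not} by itself force $\mathrm{gr}(J)=I$ for the left ideal $J=\sum \mathcal D f_i$ in the filtered (noncommutative) ring. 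The reason is that when you try to reduce the degree of an element $\sum a_i f_i \in J$ whose leading symbols cancel, the Koszul syzygy you invoke lifts to something involving commutators $[f_i,f_j]$; unless these lie back in $J$ with a drop in filtration degree, you cannot proceed.

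A concrete counterexample: in $\mathcal D(\mathbb C^2)$ take $f_1=\partial_1$ and $f_2=x_1\partial_2+1$. Their symbols $\xi_1,\,x_1\xi_2$ form a regular sequence in $\mathbb C[x_1,x_2,\xi_1,\xi_2]$ (the zero locus has codimension $2$), yet $[f_1,f_2]=\partial_2$ lies in $J=\mathcal Df_1+\mathcal Df_2$ while $\sigma(\partial_2)=\xi_2\notin(\xi_1,x_1\xi_2)$. So $\mathrm{gr}(J)\supsetneq I$ and your step four fails in general.

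What the paper does, and what you are missing, is to verify the second hypothesis of the involutivity criterion \cite[\S2.2]{Kbook}: that $[f_i,f_j]=\sum_k q_{ijk}f_k$ with $q_{ijk}$ of order at most $m_i+m_j-m_k-1$. In the case at hand this is immediate---elements of $\mathfrak g_c$ have commutators in $\mathfrak g_c$, elements of $\mathfrak Z_+$ commute with each other, and $\mathfrak Z_+$ commutes with $\mathfrak g_c$ by $\mathrm{ad}$-invariance---but it must be said. Once you add this one sentence, your argument and the paper's are the same.
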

\begin{proof}
To see that our generators are involutive, we use the following criterion \cite[\S 2.2]{Kbook}: if $\{x_i: 1 \leq i \leq m\}$ are elements of $\mathcal D$ of order $m_i$ which satisfy the conditions that 
\begin{enumerate}
\item $\bigcap_{i=1}^m \sigma(x_i)^{-1}(0)$ is of codimension $m$;
\item $[x_i,x_j] = \sum_{k} q_{ijk}x_k$, for some $q_{ijk} \in F_{m_i+m_j-m_k-1}$;
\end{enumerate}
then the system of generators is involutive. 

In our case, take $\{x_i\}$ to be a basis of $\Lieg$ (so of order $1$) and the standard generators of $\mathcal Z_+$ (of order $2,\ldots, n-1$). The second condition is then clearly satisfied, since the operators in $\mathfrak Z_+$ commute with each other and $\Lieg_c$. For the first condition, note that the associated graded generators define the set
\[
\{(X,Y,v,w) \in \Lieg \times \Lieg \times V \times V^*: \mu(X,Y,v,w) = 0, y \text{ nilpotent}\},
\]
where $\mu$ is the moment map, that is $\mathcal M_{\text{nil}}$ (in fact with a nonreduced scheme structure), so the first condition is satisfied (thus the lemma reduces essentially to the flatness of the moment map). The independence of $c$ follows immediately.
\end{proof}

\noindent
Recall the definition of $\mathscr Y$ in $\S$\ref{microsupportH}. In the following Lemma we use the fact that we may calculate the characteristic cycle of a $\mathcal D$-module either by taking a good filtration and calculating the multiplicity of the resulting $\mathcal O$-coherent sheaf on the components of its support, or by taking the multiplicities on the components of the support of the associated $\mathcal E$-module. See \textit{e.g.} \cite[Chapter 7]{Kbook} for a discussion of this fact at the level of supports.

\begin{lemma}
\label{indofc}
Suppose that $c \notin \mathscr Y$.
Let $M_\lambda$ be a standard module, and $\mu$ a partition of $n$. Then $\dim(KZ_\mu(M_\lambda))$ is independent of $c$. 
\end{lemma}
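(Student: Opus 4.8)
\noindent\emph{Proof strategy.}
The first step is a translation. By \S\ref{mKZdef}, $\dim(KZ_\mu(M_\lambda))$ is the multiplicity of the component $\overline{\Msnt_\mu}$ in the characteristic cycle of the regular holonomic $\mathcal D_{\mathfrak X,c}$-module $\mathrm{Loc}(eM_\lambda)\in\mathcal C_c$ --- equivalently, by (\ref{EmodSS}) and the refinement of it that records multiplicities, the multiplicity of that component in the support cycle of the associated $\E$-module. Since all the $\Msnt_\mu$ lie over the stable locus $\mathrm U$, and (\S\ref{microsupportH}) the map $\mathcal F\to\mathrm{Loc}\,\mathbb H(\mathcal F)$ is an isomorphism over $\mathrm U$ for every $\mathcal F\in\mathcal C_c$, it is harmless to compute with any $\mathcal F\in\mathcal C_c$ satisfying $\mathbb H(\mathcal F)\cong eM_\lambda$. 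So the lemma is equivalent to the assertion that, for $c\notin\mathscr Y$, all multiplicities of $\mathrm{CC}(\mathrm{Loc}(eM_\lambda))$ along the components of $\Lambda$ are independent of $c$.

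The plan is to read these multiplicities off from local Euler characteristics of a de Rham complex via an index theorem, and then to show the latter are deformation invariant. Writing $\mathfrak X=\bigsqcup_S S$ for the stratification of \S\ref{decompositionofM} into the finitely many relevant-type strata and $\mathrm{CC}(\mathrm{Loc}(eM_\lambda))=\sum_S m_S(c)\,[\overline{T^*_S\mathfrak X}]$, the Dubson--Kashiwara microlocal index theorem (recalled, with its application here, in the Appendix) gives, for a point $x$ of a stratum, that the Euler characteristic $\chi_x$ of the stalk at $x$ of $\mathrm{DR}(\mathrm{Loc}(eM_\lambda))$ is $\sum_{S'}\pm\,\mathrm{Eu}_{\overline{S'}}(x)\,m_{S'}(c)$, the coefficients being local Euler obstructions of the stratum closures in $\mathfrak X$ --- geometric invariants that do not involve $c$. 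Choosing one base point $x_S$ per stratum, the matrix $\big(\mathrm{Eu}_{\overline{S'}}(x_S)\big)$ is triangular for the closure order with $\pm1$ on the diagonal, hence invertible over $\mathbb Z$. It therefore suffices to prove that each $\chi_{x_S}(\mathrm{DR}(\mathrm{Loc}(eM_\lambda)))$ is independent of $c$.

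For that I would realise $\mathrm{Loc}(eM_\lambda)$ as the fibre of a flat family over the $c$-line. Write $\mathbf M_\lambda$ for the $\mathbb C[c]$-form of the standard module; it is $\mathbb C[c]$-flat, since with the standard filtration $\mathrm{gr}\,\mathbf M_\lambda\cong(\mathbb C[\mathfrak h]\otimes\tau_\lambda)\otimes\mathbb C[c]$, manifestly $c$-independent fibrewise, whence $\mathrm{gr}(e\mathbf M_\lambda)\cong(\mathbb C[\mathfrak h]\otimes\tau_\lambda)^W\otimes\mathbb C[c]$ is too. On the other side, the same involutivity argument as in the preceding Lemma, now applied to the generators $\mathfrak g_c$ alone (i.e. without the $\mathfrak Z_+$ part), shows that $\mathcal L:=\mathcal D_{\mathbb C[c]}(\mathfrak X)/\mathcal D_{\mathbb C[c]}(\mathfrak X)\mathfrak g_\bullet$ has associated graded the structure sheaf of the zero-fibre of the classical moment map (the symbols of the twisted moment map being $c$-independent), so $\mathcal L$ is $\mathbb C[c]$-flat; it is also flat over $e\mathcal H_\bullet e$, since $\mathrm{Loc}$ is exact in the range under consideration. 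Therefore the family $\mathcal G:=\mathcal L\otimes_{e\mathcal H_\bullet e}e\mathbf M_\lambda$ is $\mathbb C[c]$-flat, consists of regular holonomic $\mathcal D_{\mathfrak X}$-modules, and has fibre $\mathrm{Loc}(eM_\lambda)$ at $c$. Applying the relative de Rham functor --- which, thanks to this flatness, commutes with specialisation at $c$ --- produces a flat family of perverse sheaves, and the Euler characteristic of a stalk is constant along such a family. Feeding this back through the index theorem gives $m_S(c)$ independent of $c$ for every stratum, in particular $S=S_{\mu,\emptyset}$, proving the lemma. The preceding Lemma enters both as the case $\lambda=(n)$ (where $\mathcal G$ is $F_{HC}$) and, more essentially, because its involutivity computation is the template for the flatness of $\mathcal L$ used here; the hypothesis $c\notin\mathscr Y$ is what keeps $\mathrm{Loc}$, $\mathrm{Loc}_{\tilde{\mathscr A}_c}$ and the comparison of \S\ref{microsupportH} valid along the whole family.

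The main obstacle is the last step: one must genuinely check that $\mathrm{Loc}(e\mathbf M_\lambda)$ is $\mathbb C[c]$-flat and that the associated family of de Rham complexes has $c$-independent stalk Euler characteristics for all $c\notin\mathscr Y$. This is not formal: $\mathcal O_c$ is not semisimple for general $c\notin\mathscr Y$, so the modules $\mathrm{Loc}(eM_\lambda)$ and the corresponding perverse sheaves really do vary with $c$ --- their composition series jump at special values --- and the constancy of the \emph{Euler characteristics} across those jumps, together with the verification that the locus where the relative de Rham functor fails to commute with specialisation does not meet $\mathbb C\setminus\mathscr Y$, is where the real work lies.
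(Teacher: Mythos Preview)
Your approach is genuinely different from the paper's and, as you yourself concede at the end, incomplete at the decisive step.

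The paper avoids deformation entirely. It uses the preceding lemma not as a template but directly: $F_{HC}$ already has $c$-independent characteristic cycle. Since $\mathbb H(F_{HC})\cong eC$, where $C$ is the $\mathcal H_c$-module induced from the coinvariant algebra $\text{co}(\mathfrak h^*)$, and since over the stable locus $\text{U}$ the characteristic cycle of any object of $\mathcal C_c$ is determined by its image under $\mathbb H$ (this is exactly where $c\notin\mathscr Y$ is used, via \S\ref{microsupportH}), the characteristic cycle of $\text{Loc}(eC)$ over $\text{U}$ is $c$-independent. One then filters $\text{co}(\mathfrak h^*)$ so that its associated graded is the regular representation $\mathbb C[W]$; by flatness of induction, $R=\Delta(\mathbb C[W])$ has the same Grothendieck class as $C$, hence $\text{Loc}(eR)$ has the same characteristic cycle. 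Finally, the \emph{right} $W$-action on $\mathbb C[W]$ survives to a $W$-action on $\text{Loc}(eR)$ by $\mathcal D_{\mathfrak X,c}$-module automorphisms, compatible with a good filtration; the isotypic decomposition for this action yields the individual $\text{Loc}(eM_\lambda)$. Since the $W$-action does not involve $c$, neither do the characteristic cycles of its isotypic pieces. No flat family, no relative de Rham, and the index theorem is not needed here at all --- it enters only later, in the actual computation at $c=0$.

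Beyond the gap you flag, there are further problems with your route. Your claim that $\mathcal L$ is flat over $e\mathcal H_\bullet e$ ``since $\text{Loc}$ is exact in the range under consideration'' assumes something the paper does not establish: only $\mathbb H\circ\text{Loc}\cong\text{id}$ is proved, not exactness of $\text{Loc}$. Your parenthetical that the preceding lemma is ``the case $\lambda=(n)$'' is incorrect: $\mathbb H(F_{HC})\cong eC$, and $C$ has the Grothendieck class of the sum of \emph{all} standards with multiplicities $\dim\tau_\lambda$, not a single one. And making ``de Rham'' uniform in $c$ for a family of genuinely \emph{twisted} $\mathcal D$-modules, with the twist itself varying, requires passing equivariantly to $\mathfrak G^\circ$ and a careful argument for local constancy of stalk Euler characteristics that you have not supplied. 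The paper's trick of packaging all standards into one module whose characteristic cycle is $c$-independent by involutivity, and then carving them out by a $c$-independent group action, sidesteps every one of these issues.
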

\begin{proof}
Let $\text{co}(\mathfrak h^*)$ be the coinvariant module $\mathbb C[\mathfrak h^*]/\langle I \rangle$, where $I$ is the augmentation ideal in $\text{Sym}(\mathfrak h)^W$, viewed as a representation of $\mathbb C[\mathfrak h^*]\rtimes W$ in the obvious way, and let $C$ denote the $\mathcal H_c$ module induced from $\text{co}(\mathfrak h)$. It is shown in \cite[\S 7]{GG} that if $\langle I \rangle$ denotes the ideal of $e\mathcal H_c e$ generated by $\text{Sym}(\mathfrak h)^W_+$, then $eC \cong e\mathcal H_c e/\langle I \rangle$.  Using this, they show that $\mathbb H(F_{HC}) = eC$. Now the discussion in $\S$\ref{microsupportH} shows that if $c \notin \mathscr Y$ then the intersection of the characteristic cycle of $F_{HC}$ with $\text{U}$ is completely determined by $\mathbb H(F_{HC})$, and thus it coincides with that of $\text{Loc}(eC)$, (as $\mathbb H \circ \text{Loc}$ is naturally isomorphic to the identity). It follows that the characteristic cycle of $\text{Loc}(eC)$ intersected with $\text{U}$ is independent of $c$.

Now consider $R = \Delta(\mathbb C[W])$, the representation of $\mathcal H_c$ induced from the regular representation of $W$. Since it is clear that we may filter $\text{co}(\mathfrak h^*)$ in such a way that the associated graded module is isomorphic to $\mathbb C[W]$ as a $\mathbb C[\mathfrak h^*]\rtimes W$-module, and $e\mathcal H_c$ is a flat right $\mathbb C[\mathfrak h^*]\rtimes W$-module, it follows that $R$ and $C$ have the same class in the Grothendieck semigroup for $\mathcal O_c$, and hence the same characteristic cycle.

Since $R$ is clearly finitely generated as a $\mathbb C[\mathfrak h]$-module, and $\mathbb C[\mathfrak h]$ is finite over $\mathbb C[\mathfrak h]^W$, the module $\text{Loc}(eR) = \mathcal L_c\otimes_{e\mathcal H_c e} eR$ is finite over $\mathbb C[\mathfrak X]^G$. It follows that if $\Gamma_i$ is a good filtration on $\mathcal L_c$, the filtration $\Gamma_i\otimes eR$ is good for $\text{Loc}(eR)$. Thus we may use the order filtration on $\mathcal D_c(\mathfrak X)$ to obtain a good filtration on $\mathcal L_c$.

Now $\mathbb C[W]$ is of course a bimodule for $W$, hence it carries a right action of $W$ which commutes with the left action used in the construction of $R$. Thus we have a right $W$ action on $eR$ via $e\mathcal H_c e$-module automorphisms, and so the localization $\text{Loc}(eR)$ carries a $W$-action as a $\mathcal D_{\mathfrak X,c}$-module, which is compatible with our filtration.  But now taking the isotypic components according to this action yields the standard modules, and hence their characteristic cycles are determined by this action on $\text{CC}(\text{Loc}(eR))=\text{CC}(\text{Loc}(eC))$, which must therefore be independent of $c$ as required.
\end{proof}

\subsection{}
The previous lemma shows that, in order to calculate the part of the characteristic cycles of $\text{Loc}(eM_\lambda)$ lying in $\Lambda^s$ we may assume that $c=0$. It is known \cite{BEG} that the standard modules are simple whenever the finite Hecke algebra at $q=e^{2\pi i c}$ is semisimple. Thus it follows the standard modules are simple when $c=0$. We will write $\mathfrak M_\lambda$ for the $\mathcal D$-module corresponding to the standard module $eM_\lambda$ when $c=0$. We now wish to identify $\mathfrak M_\lambda$ more explicitly.

Recall the Grothendieck simultaneous resolution $\mu\colon \tilde{\Lieg} \to \Lieg$ where 
\[
\tilde{\Lieg} = \{(x,F) \in \Lieg \times \mathcal B: F = (F_i)_{1 \leq i \leq n}, x(F_i) \subseteq F_i\}
\]
\noindent
and $\mu$ is the projection to the first factor. Over the subset $\Lieg^{rss}$ of regular semisimple elements of $\Lieg$ this is a $W$-covering. Moreover, it is know by \cite[\S 4]{HK} that the push-forward $F_\Lieg = \mu_*(\mathcal O_{\tilde{\Lieg}})$ is a semisimple holonomic $\mathcal D$-module which is the minimal extension of its restriction to $\Lieg^{rss}$, where it is just the local system corresponding to the principal $W$-covering. For $\lambda$ an irreducible representation of $W$, let $\mathfrak M_\lambda^{\Lieg}$ denote the simple $\mathcal D_{\Lieg}$-module corresponding to the minimal extension of the local system given by $\lambda$ on $\Lieg^{rss}$. Since we are assuming that $c=0$, the sheaf of rings $\mathcal D_{\mathfrak X,0}$ is just $\mathcal D_{\mathfrak X} = \mathcal D_{\Lieg}\boxtimes\mathcal D_{\mathbb P(V)}$, and we may pull-back $F_\Lieg$ to a $\mathcal D_{\mathfrak X}$-module $F = F_\Lieg \boxtimes \mathcal O_{\mathbb P(V)}$. It follows that $F$ is a direct sum of simple modules $\mathfrak M^\Lieg_\lambda\boxtimes \mathcal O_{\mathbb P(V)}$.

\begin{lemma}
The sheaf $\mathfrak M_\lambda$ arising from the standard module $M_\lambda$ at $c=0$ is isomorphic to $\mathfrak M_\lambda^\Lieg\boxtimes \mathcal O_\lambda$.
\end{lemma}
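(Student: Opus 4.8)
\emph{Proof proposal.} The plan is to pin down the holonomic $\mathcal D_{\mathfrak X}$-module $\mathfrak M_\lambda=\text{Loc}(eM_\lambda)$ at $c=0$ by computing its restriction to a regular open set and recognising both it and $\mathfrak M_\lambda^\Lieg\boxtimes\mathcal O_{\mathbb P(V)}$ as the intermediate extension of the same local system. First note that at $c=0$ the algebra $\mathcal H_0$ is $\mathcal D(\gh)\rtimes\mathbb C[W]$, the spherical subalgebra $e\mathcal H_0 e$ is $\mathcal D(\gh)^W$, the Dunkl operators degenerate to the $\partial_y$, and hence the standard module is simply $M_\lambda=\mathcal O_{\gh}\otimes\tau_\lambda$, a $W$-equivariant $\mathcal D_\gh$-module with $W$ acting diagonally; thus $eM_\lambda=(\mathcal O_\gh\otimes\tau_\lambda)^W$. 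Since the finite Hecke algebra at $q=1$ is semisimple, $M_\lambda$ is simple, so $eM_\lambda$ is a simple $\mathcal D(\gh)^W$-module by the Morita equivalence $M\mapsto eM$.

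Next I would compute both sides over the set $\mathfrak G^{\text{rss}}$ of pairs $(X,v)$ with $X$ regular semisimple and $v$ cyclic. By Proposition~\ref{KZcheck} the functor $KZ_{(1^n)}$ agrees with $KZ$, and since at $c=0$ the parameter $q=e^{2\pi i nc}$ equals $1$ the twist $L_{q^{-1}}$ is trivial; together with the fact that $\Phi_0$ is the classical Harish--Chandra homomorphism (as noted in \S\ref{deformedHC}) this shows that $\mathfrak M_\lambda|_{\mathfrak G^{\text{rss}}}$ is the pull-back, along $\mathfrak G^{\text{rss}}\to\Lieg^{rss}\to\hreg/W$, of the $\tau_\lambda$-isotypic summand of $(p_{\text r})_*\mathcal O_{\hreg}$, the push-forward along the Galois $W$-cover $p_{\text r}\colon\hreg\to\hreg/W$. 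On the other hand, by construction $\mathfrak M_\lambda^\Lieg|_{\Lieg^{rss}}$ is the $\tau_\lambda$-isotypic summand of the local system attached to the principal $W$-cover $\tilde\Lieg^{rss}\to\Lieg^{rss}$ (this is how $\mathfrak M_\lambda^\Lieg$ is defined, following \cite{HK}); identifying $\tilde\Lieg^{rss}\cong\hreg\times_{\hreg/W}\Lieg^{rss}$ and pulling back to $\mathfrak G^{\text{rss}}$, one obtains that $(\mathfrak M_\lambda^\Lieg\boxtimes\mathcal O_{\mathbb P(V)})|_{\mathfrak G^{\text{rss}}}$ coincides with $\mathfrak M_\lambda|_{\mathfrak G^{\text{rss}}}$.

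It then remains to promote agreement on a dense open to a global isomorphism. The module $\mathfrak M_\lambda^\Lieg\boxtimes\mathcal O_{\mathbb P(V)}$ is a simple holonomic $\mathcal D_{\mathfrak X}$-module (an external product of simple holonomic modules is simple) and it lies in $\mathcal C_0$: its characteristic variety pulls back to $\{(X,Y,i,0)\in T^*\mathfrak G^\circ:[X,Y]=0,\ Y\ \text{nilpotent}\}$ — using the classical description of the characteristic variety of $\mu_*\mathcal O_{\tilde\Lieg}$ as the closure of the union of the conormals to the Lusztig strata, the conormal fibre over a regular $X$ being the space of nilpotents in $\mathbb C[X]$ — which lies in $\mathcal M_{\text{nil}}\cap T^*\mathfrak G^\circ$ and hence descends into $\Lambda$. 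Consequently $\mathfrak M_\lambda^\Lieg\boxtimes\mathcal O_{\mathbb P(V)}$ is the intermediate extension of its restriction to $\mathfrak G^{\text{rss}}$. For $\mathfrak M_\lambda=\text{Loc}(eM_\lambda)$, left adjointness of $\text{Loc}$ gives $\text{Hom}(\mathfrak M_\lambda,Q)=\text{Hom}(eM_\lambda,\mathbb H(Q))=0$ for every $Q\in\text{ker}(\mathbb H)$, so $\mathfrak M_\lambda$ has no nonzero quotient in $\text{ker}(\mathbb H)$; combining this with the Kashiwara--Rouquier description of $\text{ker}(\mathbb H)$ as the modules supported on the unstable locus (and the fact that $\mathfrak G^{\text{rss}}$ lies in the stable locus) one checks that $\mathfrak M_\lambda$ is likewise the intermediate extension of its restriction to $\mathfrak G^{\text{rss}}$, whence the two modules are isomorphic. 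An alternative, more symmetric packaging: compute $\mathbb H(F_\Lieg\boxtimes\mathcal O_{\mathbb P(V)})=\mathbb C[\tilde\Lieg]^{G}\cong\mathbb C[\gh]$ with its Springer $W$-action, identify this with $eR=e\Delta(\mathbb C[W])$ as a $W$-equivariant $e\mathcal H_0 e$-module, deduce $F_\Lieg\boxtimes\mathcal O_{\mathbb P(V)}\cong\text{Loc}(eR)$ from the equivalence of Theorem~\ref{GGtheorem}, and extract the $\tau_\lambda$-isotypic parts.

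The main obstacle is precisely this last step: controlling the subobjects of $\text{Loc}(eM_\lambda)$ supported on the non-cyclic (unstable) locus, i.e.\ upgrading ``simple in $\mathcal C_0/\text{ker}(\mathbb H)$'' to ``simple in $\mathcal C_0$''. Everything preceding it is either formal or a classical computation over the regular semisimple locus. I would also remark that for the application in Section~\ref{CCcomputation} only the characteristic cycle of $\text{Loc}(eM_\lambda)$ along $\Lambda^s$ is needed, and for that the agreement over the stable locus established above already suffices, so the subtlety can, if desired, be bypassed.
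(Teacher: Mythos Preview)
Your proposal is correct and follows the same strategy as the paper: restrict both sheaves to the regular semisimple locus, identify them there (via Proposition~\ref{KZcheck} and the identification of $KZ(M_\lambda)$ with the local system $\tau_\lambda$ at $q=1$), and then conclude by recognising both as the minimal extension of that local system. The paper's proof is terser in two respects. First, it cites \cite[\S 6.2]{GGOR} for the fact that $KZ(M_\lambda)$ is the cell module, whereas you compute this directly from the explicit form $M_\lambda=\mathcal O_\gh\otimes\tau_\lambda$ at $c=0$; these are equivalent. Second, and more interestingly, the paper simply asserts ``since $\mathfrak M_\lambda$ is simple'' and proceeds, whereas you correctly isolate this as the nontrivial step: $\text{Loc}$ is an equivalence only onto $\mathcal C_0/\text{ker}(\mathbb H)$, so simplicity of $eM_\lambda$ does not immediately give simplicity of $\text{Loc}(eM_\lambda)$ in $\mathcal C_0$. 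The paper does not spell out how this is handled. Your observation that for the intended application (the characteristic cycle on $\Lambda^s$) agreement over the stable locus already suffices is exactly right, and your alternative route via computing $\mathbb H(F_\Lieg\boxtimes\mathcal O_{\mathbb P(V)})$ and invoking Theorem~\ref{GGtheorem} would also close the gap cleanly. In short, your argument is the paper's argument, but with the one genuine subtlety made explicit rather than elided.
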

\begin{proof}
We use the (original) $KZ$-functor. It is known by \cite[\S 6.2]{GGOR} that the $KZ$-functor applied to a standard modules $M_\lambda$ yields the corresponding cell module $S_\lambda$ for the finite Hecke algebra. Since we are assuming $c=0$ this in turn is just the corresponding irreducible representation of $W$. Thus using Proposition \ref{KZcheck} it follows that $\mathfrak M_{\lambda|\mathfrak X^{rss}}$ is just the local system given by $\lambda$. Since $\mathfrak M_\lambda$ is simple, it follows that the sheaf $\mathfrak M_\lambda$ is just the minimal (or Goresky-MacPherson) extension of this local system to $\mathfrak X$. However, the sheaf $\mathfrak M_\lambda^{\Lieg}\boxtimes \mathcal O_{\mathbb P(V)}$ clearly restricts to give the same local system and is also simple, hence we see that $\mathfrak M_\lambda = \mathfrak M_\lambda^\Lieg$ as required.
\end{proof}

\subsection{}
To compute the characteristic cycles of the standard modules, we will actually use a Fourier dual description of the variety $\Lambda$. Thus we need to use the following standard remark: let $X = E \times B$ be a trivial vector bundle over $B$ and let $\mathcal D^\text{mon}_X$ be the category of monodromic $\mathcal D$-modules on $X$, that is, those for which the action of the Euler vector field associated to $E$ is locally finite. Then it is known that the partial  Fourier transform $\mathcal F_E$ along $E$ gives an equivalence between $\mathcal D^{\text{mon}}_{E\times B}$ and $\mathcal D^{\text{mon}}_{E^*\times B}$. 

\begin{lemma}
We may canonically identify 
\[
T^*(E\times B) \cong T^*(E^*\times B)\cong E\times E^*\times T^*B. 
\]
Under this isomorphism, if $N$ is a monodromic $\mathcal D_X$-module we have 
\[
\text{CC}(N) \cong \text{CC}(\mathcal F_E(N))
\]
\end{lemma}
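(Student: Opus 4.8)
The plan is to prove the statement in two steps: first establish the claimed canonical identification of cotangent bundles, then verify that partial Fourier transform preserves characteristic cycles under this identification. For the identification, since $X = E \times B$ with $E$ a vector space (trivial bundle), we have $T^*X \cong T^*E \times T^*B \cong E \times E^* \times T^*B$, using the standard trivialization $T^*E \cong E \times E^*$. Likewise $T^*(E^* \times B) \cong E^* \times E^{**} \times T^*B \cong E^* \times E \times T^*B$, and one simply has to note that the natural pairing $E \times E^* \cong E^* \times E^{**}$ (swapping factors and using $E \cong E^{**}$) is the identification that makes the two descriptions agree; this is entirely formal once one fixes the sign conventions so that the symplectic forms match (up to the usual sign flip in one of the $E$-factors that is built into the Fourier transform).

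For the main claim, I would reduce to a local computation. The partial Fourier transform $\mathcal F_E$ is, on the level of $\mathcal D$-modules on $E \times B$ (with coordinates $e_1,\dots,e_r$ on $E$ and dual coordinates $\xi_1,\dots,\xi_r$ on $E^*$), the functor that is the identity on the underlying $\mathcal O_B$-module structure but replaces the action of $e_j$ by $-\partial_{\xi_j}$ and $\partial_{e_j}$ by $\xi_j$ (with the $B$-directions untouched). Given a good filtration on a monodromic $\mathcal D_X$-module $N$, one transports it to a good filtration on $\mathcal F_E(N)$: the monodromicity hypothesis — local finiteness of the Euler field $\sum e_j \partial_{e_j}$ — is exactly what guarantees that such a transported filtration is again good (this is the standard fact underlying the equivalence $\mathcal D^{\mathrm{mon}}_{E\times B} \simeq \mathcal D^{\mathrm{mon}}_{E^*\times B}$ already invoked in the paragraph preceding the lemma). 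On associated graded objects, the symbol of $e_j$ (a function on $E^* \times E \times T^*B$, namely the coordinate $e_j$) is sent to the symbol of $-\partial_{\xi_j}$ (the coordinate on the fibre over $E^*$), and symmetrically; this is precisely the coordinate swap $E \leftrightarrow E^*$ in the identification above. Hence $\mathrm{gr}(N)$ and $\mathrm{gr}(\mathcal F_E(N))$ are identified as $\mathcal O$-modules on the common space $E \times E^* \times T^*B$, so their supports — and the multiplicities along each irreducible component — coincide, giving $\mathrm{CC}(N) \cong \mathrm{CC}(\mathcal F_E(N))$.

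I expect the main obstacle to be the bookkeeping around the good filtration: one must check that applying $\mathcal F_E$ term-by-term to a good filtration on $N$ yields a \emph{coherent} (finitely generated over $\mathrm{gr}\,\mathcal D$) filtration on $\mathcal F_E(N)$, and this genuinely uses monodromicity — without it the naive filtration can fail to be good and the characteristic variety is not even well-behaved under $\mathcal F_E$. The cleanest way around this is to cite the monodromic Fourier transform equivalence (and its compatibility with filtrations by degree in the Euler field) rather than reprove it; then the statement about characteristic cycles is an immediate consequence of the description of $\mathrm{gr}(\mathcal F_E(N))$ on associated graded. One should also take a moment to confirm that the identification of cotangent bundles is the \emph{same} one under which the symplectic structures are intertwined by $\mathcal F_E$, so that ``conormal variety to a stratum'' on one side matches ``conormal variety to the Fourier-dual stratum'' on the other — this is what will actually be used in the subsequent characteristic cycle computation for $\Lambda$.
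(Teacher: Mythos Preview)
Your proposal is correct and in fact considerably more detailed than the paper's own treatment: the paper does not give an argument at all but simply refers the reader to \cite[\S 3]{HK} for a discussion. Your sketch of the mechanism --- the Fourier transform swaps the roles of $e_j$ and $\partial_{\xi_j}$ at the symbol level, so that $\mathrm{gr}(N)$ and $\mathrm{gr}(\mathcal F_E(N))$ become the same coherent sheaf on $E\times E^*\times T^*B$ --- is the right picture, and you correctly flag that the only real content is checking that the ``transported'' filtration is again good, which is where monodromicity enters.

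One small caution on exactly that point: the order filtrations on $\mathcal D_{E\times B}$ and $\mathcal D_{E^*\times B}$ are \emph{not} intertwined by the Fourier isomorphism (since $e_j$ has order $0$ but $\partial_{\xi_j}$ has order $1$), so literally keeping the same filtration on the underlying space of $N$ will not be good for the new action. The standard fix --- which you allude to in your final paragraph --- is either to pass through the Bernstein (total-degree) filtration, which \emph{is} Fourier-invariant, or to use the weight decomposition for the Euler operator to build a compatible filtration; monodromicity is precisely what makes either route work. Since you already identify this as the place to invoke the literature rather than reprove from scratch, your plan matches what the paper does, only with more of the underlying reasoning made visible.
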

\begin{proof}
For a discussion of this see for example \cite[\S 3]{HK}. 
\end{proof}

We apply this to $\mathfrak X = \Lieg \times \mathbb P(V)$ with $E= \Lieg$ and $B = \mathbb P(V)$. The isomorphism $T^*(\Lieg\times \mathbb P(V)) \cong T^*(\Lieg^* \times \mathbb P(V))$ is simply given by identifying both with $\Lieg \times \Lieg^* \times T^*\mathbb P(V)$ (or via the trace form $\Lieg \times \Lieg \times T^*\mathbb P(V)$). We want to understand the components of $\Lambda$ in this ``Fourier dual'' picture. This is essentially already done in \cite{GG} but note they use a different transform -- the bundle maps we use are given by
\[
\begin{split}
p\colon T^*(\Lieg\times \mathbb P(V)) \to \Lieg \times \mathbb P(V), \quad & (X,Y,i,j) \mapsto (X,i) \\
\check{p} \colon T^*(\Lieg^* \times \mathbb P(V)) \to \Lieg^*\times \mathbb P(V),\quad & (X,Y,i,j) \mapsto (Y,i). 
\end{split}
\]
whereas in \cite{GG} they use $\check{p}(X,Y,i,j) = (Y,j)$. Nevertheless the same argument they use gives the following:

\begin{lemma}
Let $\Lambda^*$ denote the image of $\Lambda$ under the canonical isomorphism in $T^*(\Lieg^*\times \mathbb P(V))$. The $\Lambda^*$ is the union of the conormal bundle to the $\text{PGL}(V)$-orbits on $\mathcal N \times \mathbb P(V)$. 
\end{lemma}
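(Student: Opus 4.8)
The plan is to observe that $\mathcal M$ is already the zero fibre of a moment map, to track this description through the Hamiltonian reduction by the central torus and through the Fourier transform, and then to invoke the standard identification of a moment‑map zero fibre with the union of the conormal bundles to the orbits. This is in effect the argument of \cite[\S 4.3]{GG}, with $\mathbb P(V^*)$ replaced by $\mathbb P(V)$ throughout.

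First I would unwind the ``$X$-side''. By its very definition $\mathcal M$ is the zero fibre $\nu^{-1}(0)$ of the moment map $\nu\colon T^*\mathfrak G=\Lieg\times\Lieg^*\times V\times V^*\to\Lieg^*$ for the $\text{GL}(V)$-action on $\mathfrak G=\Lieg\times V$, where under the trace form $\nu(X,Y,i,j)=[X,Y]+ji$. The central subgroup $\mathcal Z\cong\mathbb C^\times$ acts on $T^*\mathfrak G^\circ$ exactly by the scaling $t\cdot(X,Y,i,j)=(X,Y,ti,t^{-1}j)$ used to pass from $\mathfrak G^\circ$ to $\mathfrak X$, and its moment map is $(X,Y,i,j)\mapsto\langle i,j\rangle$; as observed earlier, $\nu^{-1}(0)$ automatically lies in $\{\langle i,j\rangle=0\}$, so Hamiltonian reduction by $\mathcal Z$ over the locus $\{i\ne 0\}$ carries $\nu^{-1}(0)\cap T^*\mathfrak G^\circ$ onto the zero fibre $\mu^{-1}(0)$ of the moment map $\mu$ for the induced $\text{PGL}(V)$-action on $T^*\mathfrak X=\Lieg\times\Lieg^*\times T^*\mathbb P(V)$. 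Imposing in addition that $Y$ be nilpotent is precisely what cuts $\mathcal M_{\text{nil}}$ out of $\mathcal M$, so that before Fourier transforming we have $\Lambda=\mu^{-1}(0)\cap\{Y\in\mathcal N\}$, with $Y$ the cotangent coordinate along $\Lieg$.

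Next I would pass through the partial Fourier transform $\mathcal{F}_{\Lieg}$. By the preceding Lemma the canonical isomorphism $T^*(\Lieg\times\mathbb P(V))\cong T^*(\Lieg^*\times\mathbb P(V))$ is the identity on $\Lieg\times\Lieg^*\times T^*\mathbb P(V)$ but interchanges the base and cotangent $\Lieg$-coordinates; since $\text{PGL}(V)$ acts on $\Lieg$ and on $\Lieg^*$ by mutually dual actions, this isomorphism intertwines the two $\text{PGL}(V)$-actions and matches their moment maps, up to the customary sign in $\mathcal{F}_{\Lieg}$ (which replaces $X$ by $-X$ and is immaterial here, as it commutes with $\text{PGL}(V)$ and preserves $\{Y\in\mathcal N\}$). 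Hence $\Lambda^*=\mu^{-1}(0)\cap\{Y\in\mathcal N\}$ inside $T^*(\Lieg^*\times\mathbb P(V))$, but now $Y\in\Lieg^*$ is the base coordinate, so that $\check p$ sends $\Lambda^*$ into $\mathcal N\times\mathbb P(V)$.

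Finally I would apply the elementary fact that, for a group $H$ acting on a smooth variety $M$, a covector in $T^*M$ lies in the zero fibre of the moment map if and only if it annihilates the tangent space to the $H$-orbit through its base point; equivalently, this zero fibre is the union of the conormal bundles of all the $H$-orbits. Applying this with $M=\Lieg^*\times\mathbb P(V)$ and $H=\text{PGL}(V)$, and intersecting with the $\text{PGL}(V)$-stable closed subset $\mathcal N\times\mathbb P(V)$ (which contains the orbit through each of its points), leaves precisely the conormal bundles of those orbits lying in $\mathcal N\times\mathbb P(V)$; since $\text{PGL}(V)$ has finitely many orbits there and $\Lambda^*$ is closed, this union coincides with the union of the closures of these conormal bundles, which is the asserted description of $\Lambda^*$. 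The only real work lies in the bookkeeping of the middle two paragraphs: pinning down the normalisations so that $[X,Y]+ji=0$ is literally the $\text{PGL}(V)$-moment-map equation on $T^*(\Lieg^*\times\mathbb P(V))$, and verifying that under $\mathcal{F}_{\Lieg}$ the condition ``$Y$ nilpotent'' on the cotangent side becomes ``base point in $\mathcal N\times\mathbb P(V)$'' on the Fourier-dual side; once that dictionary is fixed, the conclusion is formal.
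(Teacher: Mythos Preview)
Your argument is correct and is essentially the approach of \cite[\S 4.3]{GG}, as you note. The paper, however, takes a different and much terser route: rather than tracking the moment map through Hamiltonian reduction and Fourier transform, it simply invokes the general structure theorem that any conic Lagrangian in $T^*X$ is a union of closures of conormal bundles $\overline{T^*_S X}$ for smooth locally closed $S\subset X$, and then uses $\text{PGL}(V)$-equivariance together with the finiteness of $\text{PGL}(V)$-orbits on $\mathcal N\times\mathbb P(V)$ to conclude that the relevant $S$'s must be precisely these orbits. Your approach has the advantage of giving the equality $\Lambda^*=\mu^{-1}(0)\cap\check p^{-1}(\mathcal N\times\mathbb P(V))$ directly, so both containments are immediate and one sees transparently that \emph{every} orbit contributes a component; the paper's argument, by contrast, as literally written establishes that each component of $\Lambda^*$ is the conormal variety of some orbit, leaving the converse (that every orbit occurs) to be inferred from the context or from a component count. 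The paper's approach is slicker and requires no bookkeeping about moment-map normalisations, but yours is more self-contained and makes the Fourier-dual picture completely explicit.
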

\begin{proof}
This follows from the fact that any conic Lagrangian in a cotangent bundle $T^*X$ is the closure of the conormal bundle of a smooth locally closed subvariety of $X$, and the fact that $\text{PGL}(V)$ acts with finitely many orbits on $\mathcal N \times \mathbb P(V)$.
\end{proof}

Thus the $\mathcal D$-modules on $\Lieg^*\times \mathbb P(V)$ which we study are supported on $\mathcal N \times \mathbb P(V)$ and are smooth along $\text{PGL}(V)$-orbits. Note that since there are finitely many orbits, they give a Whitney stratification of $\mathcal N \times \mathbb P(V)$. For our purposes we need to check which $\text{PGL}(V)$-orbits correspond to the components in $\Lambda^s$.

\begin{lemma}
\label{Fourierdualofcomponents}
The component $\Lambda^s_\lambda$ ($\lambda \vdash n$) of $\Lambda^s$ corresponds to the $\text{PGL}(V)$-orbits 
\[
\mathcal O_{\lambda}^{\text{cyc}} = \{(Y,\ell): Y \text{ has Jordan type } \lambda, \text{ and } \Lieg_Y.\ell = V\},
\]
where $\Lieg_Y$ denotes the centralizer of $Y$ in $\Lieg$.
\end{lemma}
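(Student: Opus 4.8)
The plan is to unwind the Fourier-dual description step by step, tracking a generic point of $\Lambda^s_\lambda$ through the identifications set up in the previous two lemmas. Recall from Proposition~\ref{pieces} and \S\ref{smoothpieces} that a generic point of $\Lambda^s_\lambda$ (pulled back to $\mathcal M^s \cap T^*\mathfrak G^\circ$) is of the form $(X,Y,i,0)$ where $X = z+x$ is a regular element whose semisimple part $z$ has eigenspace dimensions $\lambda_1,\dots,\lambda_k$, the nilpotent part restricted to each eigenspace $V_s$ is a single regular Jordan block, $i$ is cyclic for $X$, and $Y$ lies in $\mathbb C[X]$ (by Lemma~\ref{commuting}, since $j=0$ forces $[X,Y]=0$ and $X$ is regular). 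First I would apply the canonical isomorphism $T^*(\Lieg\times\PP(V))\cong T^*(\Lieg^*\times\PP(V))$, which by definition fixes the pair $(X,Y,i,j)$ and only reinterprets which of $X,Y$ is the ``base'' coordinate; so the image point is recorded in $\Lambda^*$ by its projection $\check p(X,Y,i,0) = (Y,\C i)$.

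The heart of the argument is then to identify the $\text{PGL}(V)$-orbit of $(Y,\C i)$. Since $Y\in\mathbb C[X]$ and $X$ is regular, $Y$ is a polynomial in $X$ with no constant term (using that the eigenprojections $\pi_s$ lie in $\mathbb C[X]$ as in the proof of Lemma~\ref{commuting}); in each eigenspace $V_s$ it is $p_s(N_s)$ for the regular Jordan block $N_s$ there. For generic choice of the parameters (the $\mathbf a^s$ in \S\ref{smoothpieces}) the polynomial $p_s$ has a nonzero linear term, so $Y|_{V_s}$ is again a regular nilpotent on $V_s$; hence $Y$ is nilpotent with Jordan type exactly $\lambda = (\lambda_1,\dots,\lambda_k)$. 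Moreover, $\Lieg_Y \supseteq \mathbb C[X] \ni X$ and, since $i$ is cyclic for $X$, already $\mathbb C[X]\cdot i = V$, so a fortiori $\Lieg_Y\cdot i = V$; thus $(Y,\C i)\in\mathcal O_\lambda^{\mathrm{cyc}}$. Conversely, one checks that $\mathcal O_\lambda^{\mathrm{cyc}}$ is a single $\text{PGL}(V)$-orbit of the expected dimension, and that the set of points of $\Lambda^s_\lambda$ of the above generic form is dense; since $\Lambda^s_\lambda$ is irreducible of dimension $\dim\Lambda^s$ (Proposition~\ref{pieces}), and $\overline{T^*_{\mathcal O_\lambda^{\mathrm{cyc}}}(\Lieg^*\times\PP(V))}$ is an irreducible component of $\Lambda^*$ of the same dimension meeting it densely, the two closures coincide. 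I would phrase this last comparison using the bound from \S\ref{decompositionofM} on which relevant strata survive after imposing the stability condition (Lemma~\ref{intersection} and Remark~\ref{componentwarning}), so that the indexing by partitions $\lambda$ matches up on the two sides.

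The main obstacle I anticipate is the bookkeeping around which $\text{PGL}(V)$-orbit on $\mathcal N\times\PP(V)$ is conormal-generic inside the closure $\Lambda^*_\lambda$: it is not literally the case that $(Y,\C i)$ ranges over a single orbit as $(X,Y,i,0)$ ranges over all of $\Lambda^s_\lambda$ (just as $\Lambda_{\lambda,\emptyset}\cap\mathfrak M^s$ is not contained in $T^*_{S_{\lambda,\emptyset}}\mathfrak X$, cf.\ Remark~\ref{componentwarning}), only over a dense subset. So the argument must be ``generic point plus closure'' rather than a pointwise bijection, and one has to be careful that the cyclicity condition $\Lieg_Y\cdot\ell = V$ is exactly the open condition cutting out the relevant orbit (equivalently, that $\mathcal O_\lambda^{\mathrm{cyc}}$ is the unique orbit of pairs $(Y,\ell)$ with $Y$ of type $\lambda$ whose conormal variety has dimension $\dim\Lambda^s$). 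Everything else is a routine comparison of dimensions, using $\dim\mathcal O_\lambda^{\mathrm{cyc}} + \dim(\text{fibre}) = \dim\Lambda^*_\lambda$ together with the dimension count already recorded in Proposition~\ref{pieces}.
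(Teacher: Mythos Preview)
Your approach is essentially the same as the paper's: both track a generic point of $\Lambda^s_\lambda = \Lambda_{\lambda,\emptyset}\cap\mathfrak M^s$ under $\check p$, use that $Y$ commutes with a regular $X$ so that $Y\in\mathbb C[X]$ and generically has Jordan type $\lambda$, and then deduce $\Lieg_Y\cdot\ell=V$ from the inclusion $\mathbb C[X]\subseteq\Lieg_Y$ together with cyclicity of $\ell$ for $X$. The paper is more terse and, rather than invoking a dimension count at the end, simply observes that such $(Y,\ell)$ form a single $\text{PGL}(V)$-orbit; your extra discussion of closures and dimensions is correct but unnecessary once that is noted. One small muddle: the parameters $\mathbf a^s$ of \S\ref{smoothpieces} describe $X$, not $Y$ (on the slice $\mathcal S_\lambda$ the matrix $Y=Y_\lambda$ is fixed with Jordan type $\lambda$), so the sentence ``for generic choice of the $\mathbf a^s$ the polynomial $p_s$ has nonzero linear term'' is not the right justification---either cite directly that on $\Msnt_\lambda$ the Jordan type of $Y$ is $\lambda$ by definition, or argue as the paper does that within the conormal fibre over a fixed $(X,\ell)$ the generic nilpotent $Y\in\mathbb C[X]$ has this Jordan type.
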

\begin{proof}
Recall from the last paragraph of Section \ref{decompositionofM} that $\Lambda^s_\lambda$ corresponds to the component $\Lambda_{\lambda,\emptyset}$ of $\Lambda$, that is the closure of the conormal bundle $T_S^*(\mathfrak X)$ where $S$ is the stratum of pairs $(X,\ell)$ where $X$ has eigenvalues with multiplicity given by $\lambda$ and $\ell$ is cyclic for $X$. Then the proof of Lemma \ref{commuting} shows that the set of nilpotent endomorphisms $Y$ which commute with $X$, have Jordan type $\lambda$, and for which the projection of $\ell$ to each generalized eigenspace of $X$ is cyclic for the restriction of $Y$ to that space, is open dense in $\check{p}(\Lambda_{\lambda,\emptyset})$. Since $\Lieg_Y$ contains $\mathbb C[X,Y]$ it follows $\Lieg_Y.\ell = V$ for such $Y$, and these clearly form a single $\text{PGL}(V)$ orbit. 
\end{proof}

Note that if $\mathcal O_\lambda$ denotes the orbit of nilpotent endomorphisms of Jordan type $\lambda$, then clearly
\[
\mathcal O_{\lambda}^{\text{cyc}} = \Lambda^s \cap \mathcal O_{\lambda}\times V.
\]
where we have identified $\Lambda^s$ with a subset of $\Lambda^*$ using the isomorphism above.
\subsection{}
Since the characteristic cycle of $\mathfrak M_\lambda$ is determined by its deRham complex $\text{DR}(\mathfrak M_\lambda)$ we may work purely topologically. 

\begin{Theorem}
Suppose that $c \notin \mathscr Y$. Let $\mathfrak M_{\lambda,c}$ be the $\mathcal D_{\mathfrak X,c}$-module associated to the standard module $M_\lambda$. Then we have
\[
\text{CC}(\mathfrak M_{\lambda|\mathfrak X^{\text{reg}}}) = \sum_{\mu \leq \lambda} K_{\lambda,\mu} [\Lambda_\mu],
\]
where $\leq$ denotes the dominance order, and $K_{\lambda,\mu}$ are the Kostka numbers. In other words, we have
\[
\dim(KZ_\mu(eM_\lambda) = K_{\lambda,\mu}.
\]
\end{Theorem}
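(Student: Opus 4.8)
The plan is to reduce everything to the classical case $c=0$ and then extract the multiplicities from Springer theory. By Lemma \ref{indofc} the numbers $\dim KZ_\mu(eM_\lambda)$ are independent of $c$ (for $c\notin\mathscr Y$), so I would specialise to $c=0$; there the preceding Lemma identifies $\mathfrak M_\lambda$ with $\mathfrak M_\lambda^{\Lieg}\boxtimes\mathcal O_{\mathbb P(V)}$, where $\mathfrak M_\lambda^{\Lieg}$ is the simple summand of the Grothendieck--Springer module $F_{\Lieg}=\mu_*\mathcal O_{\tilde{\Lieg}}$ attached to the irreducible $W$-representation $V_\lambda$ labelled by $\lambda$. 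Since $\mathrm{CC}$ is computed from the de Rham complex, the task becomes the purely topological one of computing the multiplicity of the component $\Lambda_\mu$ in $\mathrm{CC}(\mathfrak M_{\lambda|\mathfrak X^{\mathrm{reg}}})$.

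The next step is to pass to the Fourier-dual picture. Applying the monodromic Fourier transform $\mathcal F_{\Lieg}$ along $\Lieg$ (which preserves characteristic cycles) together with the theorem of Hotta--Kashiwara identifying the Fourier transform of the Grothendieck--Springer module with the Springer module $\pi_*\mathcal O_{T^*\mathcal B}$, one identifies $\mathcal F_{\Lieg}(\mathfrak M_\lambda^{\Lieg})$ with the intersection cohomology $\mathcal D$-module of a nilpotent orbit closure $\overline{\mathcal O_\nu}$ in $\Lieg^*$ (the orbit being the Springer partner of $\lambda$ for $GL(V)$; since the relevant local systems are trivial this is an honest $IC$-sheaf of an orbit closure). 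By Lemma \ref{Fourierdualofcomponents} the component $\Lambda_\mu$ goes over to $\overline{T^*_{\mathcal O_\mu^{\mathrm{cyc}}}(\Lieg^*\times\mathbb P(V))}$, and since $\mathcal O_\mu^{\mathrm{cyc}}$ is dense in $\mathcal O_\mu\times\mathbb P(V)$ this Lagrangian splits as $\overline{T^*_{\mathcal O_\mu}\Lieg^*}\times(\text{zero section of }T^*\mathbb P(V))$. Because $\mathcal F_{\Lieg}(\mathfrak M_\lambda)=\mathcal F_{\Lieg}(\mathfrak M_\lambda^{\Lieg})\boxtimes\mathcal O_{\mathbb P(V)}$ likewise splits off the $\mathbb P(V)$-factor, the desired multiplicity equals the multiplicity of $\overline{T^*_{\mathcal O_\mu}\Lieg^*}$ in $\mathrm{CC}\bigl(IC(\overline{\mathcal O_\nu})\bigr)$, a microlocal multiplicity attached to the nilpotent orbit data alone.

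It remains to evaluate this microlocal multiplicity, and this is where the index theorem reviewed in the Appendix is used: at a generic point of $\mathcal O_\mu$ one picks a generic conormal covector (which lands in the centraliser $\Lieg_Y$) and computes the vanishing-cycle complex of the Springer sheaf along the corresponding deformation. The outcome is that the fibre of the local system $KZ_\mu$ is identified, $W$-equivariantly, with the cohomology $H^*(\mathcal B_{Y_\mu})$ of the Springer fibre over a nilpotent element of Jordan type $\mu$; equivalently these microlocal multiplicities are the Kostka--Foulkes polynomials evaluated at $1$. Springer's theorem then gives that the multiplicity of $V_\lambda$ in $H^*(\mathcal B_{Y_\mu})$ is $K_{\lambda\mu}$, which is the asserted formula (and forces the dominance condition $\mu\le\lambda$). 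A convenient way to organise the $W$-dependence, and to cross-check the computation, is to treat all $\lambda$ at once via $R=\Delta(\mathbb C[W])$: one has $\mathrm{Loc}(eR)\cong F_{\Lieg}\boxtimes\mathcal O_{\mathbb P(V)}$ at $c=0$, so $KZ_\mu(eR)$ carries a commuting $W$-action, decomposes as $\bigoplus_\lambda V_\lambda\otimes KZ_\mu(eM_\lambda)$, and is identified with $H^*(\mathcal B_{Y_\mu})$ as a $W$-module, whose total dimension is the multinomial coefficient $\binom{n}{\mu}=\sum_\lambda(\dim V_\lambda)K_{\lambda\mu}$.

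The main obstacle is exactly that vanishing-cycle computation: showing carefully, via the index theorem, that the microlocal stalk of $\mathcal F_{\Lieg}(\mathfrak M_\lambda)$ along $\Lambda_\mu$ is governed by Springer-fibre cohomology — this is a microlocal reincarnation of Springer's construction and is what makes the argument self-contained (avoiding Haiman's work). A secondary, purely bookkeeping point is to pin down the normalisation of the Springer correspondence so that the irreducible labelled by $\lambda$ matches the orbit closure supporting $\mathcal F_{\Lieg}(\mathfrak M_\lambda^{\Lieg})$, ensuring that the Kostka numbers and the dominance condition come out untransposed.
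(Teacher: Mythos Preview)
Your reduction to $c=0$, the identification $\mathfrak M_\lambda=\mathfrak M_\lambda^{\Lieg}\boxtimes\mathcal O_{\mathbb P(V)}$, the passage to the Fourier-dual picture, and the use of Hotta--Kashiwara to identify $\mathcal F_{\Lieg}(\mathfrak M_\lambda^{\Lieg})$ with the IC sheaf $\mathcal I_\lambda$ of a nilpotent orbit closure all match the paper exactly. The divergence is only in the final computation of the multiplicity of $[T^*_{\mathcal O_\mu}\Lieg]$ in $\text{CC}(\mathcal I_\lambda)$.

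The paper does \emph{not} attempt to identify the microlocal stalk with Springer-fibre cohomology. Instead it applies the index theorem in the form $m_\mu(\mathcal I_\lambda)=\sum_{\nu}c_{\mu,\nu}\,\chi_\nu(\mathcal I_\lambda)$ and then invokes two external results: the Evens--Mirkovi\'c torus-action argument \cite{EM}, which shows that for nilpotent orbits in type $A$ the local Euler obstructions satisfy $c_{\mu,\nu}=\delta_{\mu,\nu}$, and Lusztig's computation \cite{L81} of the stalk cohomologies of $\mathcal I_\lambda$ in terms of Kostka polynomials, giving $\chi_\mu(\mathcal I_\lambda)=K_{\lambda,\mu}$. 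The index theorem is thus used precisely to reduce the microlocal multiplicity to ordinary stalk Euler characteristics, not to set up a vanishing-cycle calculation.

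Your proposed route --- computing the Morse group of the full Springer sheaf at a generic conormal covector to $\mathcal O_\mu$ and identifying it $W$-equivariantly with $H^*(\mathcal B_{Y_\mu})$ --- is a genuinely different argument. It would yield a finer statement (the $W$-module structure, not just dimensions), and it bypasses the separate citations of \cite{EM} and \cite{L81}. But as you yourself flag, that microlocal identification is the entire content of the step and is not supplied; moreover your invocation of ``the index theorem'' at that point is misplaced, since what you describe is a direct Morse-group computation rather than the Euler-obstruction formula of the Appendix. The paper's approach trades your single hard step for two off-the-shelf theorems glued by the index formula, which is what makes its proof short and self-contained modulo those references.
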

\begin{proof}
We have already seen that it is enough to compute the characteristic cycle in the case $c=0$, where we write $\mathfrak M_\lambda$ instead of $\mathfrak M_{\lambda,0}$.
Taking the Fourier transform $\mathcal F_\Lieg(\mathfrak M_\lambda)$ we obtain a $\mathcal D$-module on $\mathcal N \times \mathbb P(V)$. From the preceeding discussion we know that $\mathfrak M_\lambda = \mathfrak M^\Lieg_\lambda \boxtimes \mathcal O_{\mathbb P(V)}$, and its Fourier transform is then just $\mathcal F_{\Lieg}(\mathfrak M^{\Lieg}_\lambda)\boxtimes \mathcal O_{\mathbb P(V)}$. Now by \cite[\S 5]{HK} the $\mathcal D$-module $\mathcal F_\Lieg(\mathfrak M^{\mathfrak g}_\lambda)$ corresponds to the intersection cohomology sheaf $\mathcal I_\lambda$ on $\mathcal O_\lambda$, so that we are reduced to calculating the characteristic cycles of intersection cohomology sheaves of nilpotent orbits. But these are known thanks to \cite{EM} which uses a torus symmetry trick and the knowledge of the stalk Euler characteristics of the complexes. Indeed \cite[Lemma 3.2]{EM} shows that for nilpotent orbits $\alpha,\beta \subset \mathcal N$ (in type $A$) the local Euler obstruction $c_{\alpha,\beta}$ is zero whenever $\alpha \neq \beta$ (and by definition it is $1$ when $\alpha=\beta$). Thus since our intersection cohomology sheaves $\mathcal I_\lambda$ are constructible with respect to the stratification of $\mathcal N$ it follows from the Index Theorem (see Theorem \ref{indextheorem}) that 
\[
CC(\mathcal I_{\lambda}) = \sum_{\mu} \chi_\mu(\mathcal I_\lambda)[T^*_{\mathcal O_\mu}\mathfrak g],
\]
where $\chi_\mu(\mathcal I_\lambda)$ denotes the stalk Euler characteristic of $\mathcal I_\lambda$ along the orbit $\mathcal O_\mu$. But it is shown in \cite[\S 2]{L81} that the dimensions of the stalk cohomologies of the intersection cohomology sheaves of nilpotent orbits in $\GL(V)$ are given by Kostka polynomials, from which it follows immediately from this that the stalk Euler characteristics are given by the Kostka numbers $K_{\lambda, \mu}$, which completes the proof.
\end{proof}

\begin{remark}
Note that the real advantage of the description of $\text{Loc}(eM_\lambda)_{|c=0}$ as $\mathfrak M_\lambda^\Lieg \boxtimes \mathcal O_{\mathbb P(V)}$ was that it makes the computation of the Fourier transform straightforward. For details on the index theorem  which reduce the computation of characteristic cycles to that of Euler characteristics and the local Euler obstructions see the Appendix.
\end{remark}

\begin{remark}
In \cite[\S 2.7]{GS} Gordon and Stafford defined a characteristic cycle for modules in category $\mathcal O^\text{sph}_c$ via their $\mathbb Z$-algebra construction, and computed them for standard modules. Subsequently, Ginzburg, Gordon and Stafford \cite{GGS} established the equivalence of their definition with the $\mathcal D$-module one given in \cite{GG}, which implies the above result. Their calculation however depended on Haiman's work on the Hilbert scheme, while the above calculation uses only standard tools from $\mathcal D$-modules and the elegant torus action trick of Evens-Mirkovic.
\end{remark}

\section{Factorization through Hecke algebras}
\label{HeckeAlgebras}
\subsection{}
\label{nearbyvanishing}
We now wish to use $\mathcal D$-module techniques to show that the $KZ$-functor factors through a functor to representations of the finite Hecke algebra. Thus we wish to check that the generators of the braid group satisfy the quadratic relation 
\[
(T_s -1 )(T_s + q) = 0.
\]
Since this relation involves only a single generator, we first establish it in the rank one case. For this we need to recall the classification of regular singularities $\mathcal D$-modules on the complex line which are $\mathcal O$-coherent on $\mathbb C-\{0\}$. Let $\mathcal M$ be such a $\mathcal D$-module. Then by taking a $V$-filtration of $V^\bullet\mathcal M$ (see for example \cite{K83a}) one may define functors of nearby and vanishing cycles:
\[
\Psi(\mathcal M) = \text{gr}^V_0(\mathcal M), \quad \Phi(\mathcal M) = \text{gr}^V_{-1}(\mathcal M),
\]
from $\mathcal D$-modules to finite-dimensional vector spaces. Moreover these functors are equipped with a ``canonical''' map $c\colon \Psi(\mathcal M) \to \Phi(\mathcal M)$ (induced by the action of $\partial$), and a ``variation'' map $v\colon \Phi(\mathcal M) \to \Psi(\mathcal M)$. These are related to each other by the equations:
\begin{equation}
\label{canvarequations}
c\circ v = T - \text{Id}, \quad v \circ c = T - \text{Id}.
\end{equation}
where $T$ is the monodromy operator $\text{exp}(-2i\pi x\partial)$.
In fact it can be shown that our category of $\mathcal D$-modules is then equivalent to the category of pairs of vector spaces $(V,W)$ together with linear maps $c\colon V \to W$, and $v\colon W \to V$ such that $1$ is not an eigenvalue of either $cv$ or $vc$. 

\begin{lemma}
Suppose that $n=2$. The monodromy representations of the braid group $\mathcal B_2 = \mathbb Z$ given by $KZ_{(1^2)}$ factor through the Hecke algebra at $q=e^{2\pi i c}$.
\end{lemma}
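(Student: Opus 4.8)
The plan is to reduce the statement to a computation with a single regular singular $\mathcal D$-module on the complex line and then extract the quadratic relation from the nearby/vanishing cycle dictionary recalled above.

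For $n=2$ the $\mathfrak{sl}_2$-part of $\mathfrak h^{\mathrm{reg}}/W$ is simply $\mathbb C^\times$ with coordinate $u=(x_1-x_2)^2$, so $\mathcal B_2=\pi_1(\mathbb C^\times)\cong\mathbb Z$ is generated by a single loop around $u=0$, whose monodromy we denote $T_s$. Given $M\in\mathcal O_c$, the spherical module $eM$ is finitely generated over $\mathbb C[\mathfrak h]^W=\mathbb C[u]$, hence underlies a coherent regular holonomic $\mathcal D_{\mathbb C}$-module $\mathcal M$ on the $u$-line which is $\mathcal O$-coherent on $\mathbb C^\times$ and whose restriction there is the local system $KZ_{(1^2)}(eM)$ (Proposition~\ref{KZcheck}). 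By the classification recalled in the excerpt, $\mathcal M$ is encoded by a pair $(\Psi(\mathcal M),\Phi(\mathcal M))$ with maps $c\colon\Psi(\mathcal M)\to\Phi(\mathcal M)$ and $v\colon\Phi(\mathcal M)\to\Psi(\mathcal M)$, with $KZ_{(1^2)}(eM)\cong\Psi(\mathcal M)$ as $\langle T_s\rangle$-modules, $T_s=\mathrm{Id}+cv$ on $\Psi(\mathcal M)$, and $1$ not an eigenvalue of $vc$.

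The key reduction is the identity
\[
(T_s-1)(T_s+q)=(cv)\bigl(cv+(1+q)\,\mathrm{Id}\bigr)=c\,\bigl(vc+(1+q)\,\mathrm{Id}_{\Phi(\mathcal M)}\bigr)\,v,
\]
which shows it is enough to prove that $vc=-(1+q)\,\mathrm{Id}$ on $\Phi(\mathcal M)$, i.e. that the monodromy $\mathrm{Id}+vc$ on the vanishing cycles is the \emph{scalar} $-q$. It is the scalarity, not merely the eigenvalue, that is required, and it is precisely what makes the argument uniform in $M$, with no Jordan blocks. To compute $\Phi(\mathcal M)$ I would pull everything back along the double cover $t\mapsto u=t^{2}$, $t=x_1-x_2$, where $eM$ is recovered from the $W$-equivariant $\mathcal D$-module $M_{|\mathfrak h^{\mathrm{reg}}}$ through the Dunkl embedding. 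The explicit shape of the Dunkl operator — with its deformed commutator $[T_y,x_1-x_2]$ governed by $s$ — realizes $M_{|\mathfrak h^{\mathrm{reg}}}$ as a direct sum, along the $W$-grading, of a piece on which the connection has residue $0$ and a piece on which it has residue proportional to $c$; since $s$ is a reflection this residue is \emph{semisimple}. Descending along $t\mapsto t^{2}$, and incorporating the parity shift from the double cover together with the conjugation by $\delta=x_1-x_2$ built into the twisted radial parts map $\Psi_c$ of Section~\ref{deformed}, the invariant piece contributes the trivial local system and the anti-invariant piece contributes the local system with monodromy $e^{\pi i}e^{2\pi i c}=-q$. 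Matching this against the quiver data identifies $\Phi(\mathcal M)$ with the anti-invariant contribution and its monodromy with the scalar $-q$, giving $vc=-(1+q)\,\mathrm{Id}$ and hence the Hecke relation; one then reads off in addition that $T_s$ is semisimple with eigenvalues in $\{1,-q\}$.

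The main obstacle is the precise identification of the vanishing cycle space together with its monodromy. The scalarity is conceptually clean — it reflects the $W$-grading of the Dunkl operator, or, in the microlocal language of~\S\ref{microsupportH}, the equivariance of $\mathcal L_c$ under the connected group $\mathrm{GL}(V)$ — but pinning the scalar down to exactly $-q$ (as opposed to $q$, $-q^{-1}$, or $e^{\pm 4\pi i c}$) requires careful bookkeeping of the $\delta$-twist in $\Psi_c$, of the double-cover parity, and of the branch conventions, and this is where essentially all of the care is needed.
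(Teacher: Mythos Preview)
Your algebraic reduction is the same as the paper's: once one knows that the monodromy on the ``other'' piece of the $(\Psi,\Phi)$ pair is the \emph{scalar} $-q$, the Hecke relation on the remaining piece follows from $c\circ v=T-\mathrm{Id}$, $v\circ c=T-\mathrm{Id}$. (A small slip: with the conventions recalled above, $c\colon\Psi\to\Phi$ and $v\colon\Phi\to\Psi$, so the monodromy on $\Psi$ is $\mathrm{Id}+vc$, not $\mathrm{Id}+cv$; swapping the letters throughout fixes this.)

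The genuine gap is at the very first step, where you pass from $eM$ to ``a coherent regular holonomic $\mathcal D_{\mathbb C}$-module on the $u$-line''. Being finitely generated over $\mathbb C[u]$ does not produce a $\mathcal D$-module structure, and the Dunkl embedding only gives one over $\mathbb C^\times$: for $n=2$ the spherical algebra $e\mathcal H_c e$ is a primitive quotient of $U(\mathfrak{sl}_2)$, not $\mathcal D(\mathbb C)$, so there is no canonical extension of the connection across $u=0$. Different extensions of the local system on $\mathbb C^\times$ (say $j_!$, $j_*$, $j_{!*}$) have different vanishing cycles, and without a distinguished one the quantity $vc$ on $\Phi(\mathcal M)$ is simply not defined. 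Your later sketch (pull back along $t\mapsto t^2$, split along the $W$-grading, track residues and the $\delta$-twist) is really an attempt to manufacture such an extension by hand; as you yourself note, this is where ``essentially all of the care is needed'', and in the present form it is not a proof.

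The paper sidesteps this entirely by working not on $\mathfrak h/W$ but on the exceptional $\mathbb P^1$ inside $\text{Hilb}^2(T^*\Sigma)$. The Kashiwara--Rouquier $\mathscr W$-algebra restricted there is the genuinely twisted ring $\mathcal D_{\mathbb P^1,\lambda}$ with $\lambda=c-\tfrac12$, so every $M\in\mathcal O_c$ localizes to an honest $\mathcal D_{\mathbb P^1,\lambda}$-module defined across the singular point. In that picture the roles are swapped relative to yours: the $KZ_{(1^2)}$-monodromy is the \emph{vanishing}-cycle monodromy, and the nearby-cycle monodromy is forced, by the twist alone, to be the scalar $e^{2\pi i\lambda}=-q$. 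No Dunkl bookkeeping is needed; the $-\tfrac12$ in $\lambda$ already encodes the $\delta$-twist and the double-cover parity you were tracking. If you want to salvage your approach you must either invoke that localization (which puts you back in the paper's setting) or exhibit, uniformly in $M$, a $\mathcal D$-module on the full $u$-line whose vanishing-cycle monodromy you can compute.
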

\begin{proof}
By taking co-ordinate ``centre of mass'' coordinates $x,x',y,y'$ such that $W=S_2$ acts trivially on $x',y'$ and by the sign representation on $x,y$, we may reduce to the case of $T^*\mathbb P^1$. By the discussion in the last section of \cite{KR}, the sheaf of rings on $\text{Hilb}^2(T^*\Sigma)$ localizing $e\mathcal H_c e$ when restricted to the exceptional divisor is just the twisted ring of differential operators $\mathcal D_{\mathbb P^1,\lambda}$ where $\lambda = c - \frac{1}{2}$. The characteristic cycle of a module $\mathcal M$ in $\mathcal C_c$ must then lie in $\mathbb P^1 \cup T^*_{0}\mathbb P^1$, and $\mathcal M$ becomes a module for the sheaf of rings $\mathcal D_{\mathbb P^1,\lambda}$. We claim that this implies the monodromy around $T^*_0\mathbb P^1 - \{0\}$, which is the monodromy given by $KZ_{(1^2)}$, satisfies the quadratic relation of the Hecke algebra.

In terms of the sheaf of rings $\mathcal D_{\mathbb P^1,\lambda}$, this monodromy is the action of $T$ on the vanishing cycles sheaf. Since the monodromy operator is given by $\text{exp}(2\pi i x \partial_x)$, the twisted equivariance forces $T$ to act as  $e^{2\pi i \lambda} = -e^{2\pi i c} =-q$ on the nearby cycles sheaf. But now it follows that $v\circ c =-q-1$. By Equation \ref{canvarequations} we see that this implies that on the vanishing cycles we have
\[
(T-\text{Id})^2 = (c\circ v)^2 = c\circ(v\circ c) \circ v = c\circ( -q-1)\circ v = -(q+1)(T-\text{Id}).
\]
Rearranging this equation gives us the Hecke algebra quadratic equation as required. 
\end{proof}

\subsection{}
We now deduce the general case by reduction to the rank one case. Let $\mathfrak g_i$ denote the subset of $\Lieg$ consisting of matrices of the form
\[
\left(\begin{array}{ccccc} a_1 & 0 &  \cdots & \cdots & 0 \\0 & \ddots & 0 & \cdots & 0 \\ \vdots & 0 & A & 0 & \vdots \\ \vdots & \vdots & 0 & a_{i+2} & 0 \\ 0 &\cdots & \cdots & 0 & \ddots \end{array}\right)
\]
where the $a_j$ ($j \in \{1,\ldots,i-1,i+2,\ldots n\}$) are pairwise distinct numbers, $A$ is a $2\times 2$ matrix with entries in the $i$th and $(i+1)$th rows, and the eigenvalues of $A$ are distinct from the $a_j$s. (The set $\mathfrak g_1$ is also used to reduce to rank $1$ in \cite[\S3]{KR}.) It is easy to check that $\mathfrak g_i\times V$ is noncharacteristic for $\Lambda$, thus if $M$ is an object in $\mathcal C_c$, its pull-back to $\mathfrak g_i$ is holonomic with characteristic variety contained in the fibrewise projection of $\Lambda$ to $T^*(\mathfrak g_i\times V)$. More precisely, letting $i \colon \mathfrak g_i \times V^\circ \to \mathfrak G^\circ$ denote the inclusion map we have the diagram:

\xymatrix{ & & &  T^*(\mathfrak g_i \times V^\circ) & \ar[l]_{i_d \qquad} \mathfrak X \times_{\mathfrak g_i\times V^\circ} T^*(\mathfrak g_i \times V^\circ) \ar[d]^{i_\pi}  \\
& & & &  T^*\mathfrak G^\circ}
\noindent
and $\text{Ch}(i^*(M)) \subset i_di_{\pi}^{-1}(\text{Ch}(M)$. Here $i_\pi$ is just the obvious inclusion map, and $i_d$ is the fibrewise projection map. Now it is immediate from the definitions that $i_di_\pi^{-1}(\Lambda)$ can be identified with $\Lambda_2 \times \mathbb C^{n-2}\times (\mathbb C^\times)^{n-2}$, where $\Lambda_2$ is the variety $\Lambda$ in the case $n=2$.  Moreover this identification is equivariant for the action of the group $G_2 = T\times \text{SL}_2$ where $G_2$ is embedded in $\text{GL}(V)$ in the obvious way so as to be compatible with the map $i$. Let
\[
A(t) = \left(\begin{array}{cc} e^{2\pi i t} & 0 \\ 0 & e^{-2\pi i t}\end{array}\right)
\]
Picking pairwise distinct complex numbers $(a_j)$ none of which lie in $\{\pm 1\}$, the action of the generator $T_i$ on $KZ_{(1^n)}(M)$ is given by monodromy around the curve
$t \mapsto i(a_1,\ldots,a_{i-1},A(t),a_{i+2},\ldots, a_n)$. Thus we see that it is enough to check the quadratic relation holds for the case $n=2$, which has already been checked. Thus we have the following theorem

\begin{Theorem}
Let $M \in \mathcal C_c$. The monodromy representation given by $KZ_{(1^n)}(M)$ factors through the Hecke algebra $H_W(q)$ where $q = e^{2\pi i c}$.
\end{Theorem}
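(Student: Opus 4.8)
The plan is to reduce the quadratic relation for each standard generator of the braid group to the rank-one statement already established. Recall that $H_W(q)$ is by definition the quotient of the group algebra $\mathbb C[\mathcal B_W]$ by the two-sided ideal generated by the elements $(T_i-1)(T_i+q)$, where $T_1,\dots,T_{n-1}$ are the elementary generators of $\mathcal B_W=\mathcal B_n$; moreover all the $T_i$ are conjugate in $\mathcal B_n$. Hence it suffices to show that for each $i$ the operator $KZ_{(1^n)}(M)(T_i)$ satisfies $(T_i-\mathrm{Id})(T_i+q)=0$, and by symmetry we may concentrate on $T_1$.

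First I would make the slice construction precise. Using that $\mathfrak g_i\times V$ is noncharacteristic for $\Lambda$, Kashiwara's estimate for the characteristic variety of a noncharacteristic restriction shows that for $M\in\mathcal C_c$ the (underived) pull-back $i^*M$ along $i\colon \mathfrak g_i\times V^\circ\hookrightarrow\mathfrak G^\circ$ is again holonomic and regular, with $\mathrm{Ch}(i^*M)\subseteq i_d\,i_\pi^{-1}(\mathrm{Ch}(M))\subseteq i_d\,i_\pi^{-1}(\Lambda)$. The next point is the identification $i_d\,i_\pi^{-1}(\Lambda)\cong\Lambda_2\times\mathbb C^{n-2}\times(\mathbb C^\times)^{n-2}$, which I would verify to be equivariant for $G_2=T\times\mathrm{SL}_2$ and compatible with the $(G_2,c.\mathrm{tr})$-equivariant structure inherited from the $(\mathrm{GL}(V),c.\mathrm{tr})$-equivariance of $M$. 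Concretely this realizes $i^*M$, near the relevant part of its characteristic variety, as an object of the $n=2$ version of $\mathcal C_c$ (carrying the twist that produces the constant $c-\tfrac12$ in the rank-one lemma) externally tensored with a constant local system along the $(\mathbb C^\times)^{n-2}$-factor.

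Next I would match monodromies. After choosing a base point in $\mathfrak G^{\mathrm{rss}}$, the generator $T_1$ of $\mathcal B_n$ is represented by monodromy of $\mathcal F_{|\mathfrak G^{\mathrm{rss}}}$ along the explicit loop $t\mapsto i(A(t),a_3,\dots,a_n)$, where the $a_j$ are pairwise distinct and avoid $\{\pm1\}$ and $A(t)$ is as in the text. This loop lies inside $\mathfrak g_1\times V^\circ$, so the $T_1$-monodromy of $KZ_{(1^n)}(M)$ coincides with the monodromy of $i^*M$ along the same loop; by the identification above the latter is exactly the monodromy computed in the proof of the rank-one lemma for a module in $\mathcal C_c$ with $n=2$. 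That lemma gives $(T_1-\mathrm{Id})^2=-(q+1)(T_1-\mathrm{Id})$, which rearranges to $(T_1-\mathrm{Id})(T_1+q)=0$. Running the same argument with $\mathfrak g_i$ in place of $\mathfrak g_1$ for each $i$ shows that the representation annihilates the defining ideal of $H_W(q)$, and hence factors through $H_W(q)$.

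The main obstacle is the combination of the noncharacteristic restriction with the bookkeeping of equivariant structures: one must check that $i^*M$ genuinely has its characteristic variety inside $i_d\,i_\pi^{-1}(\Lambda)$, so that Theorem \ref{Morselocalsystem} and the rank-one analysis apply to it, and, more delicately, that the $(G_2,c.\mathrm{tr})$-equivariant structure on $i^*M$ is precisely the one under which the rank-one computation produces monodromy $-q$ on the nearby-cycle space --- equivalently, that the isomorphism $i_d\,i_\pi^{-1}(\Lambda)\cong\Lambda_2\times\mathbb C^{n-2}\times(\mathbb C^\times)^{n-2}$ is equivariant on the nose. Once these compatibilities are verified, the reduction to the already-proven $n=2$ case is formal.
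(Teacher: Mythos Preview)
Your proposal is correct and follows essentially the same route as the paper: reduce to the rank-one lemma by restricting along the noncharacteristic slice $\mathfrak g_i\times V^\circ\hookrightarrow\mathfrak G^\circ$, use the identification $i_d i_\pi^{-1}(\Lambda)\cong\Lambda_2\times\mathbb C^{n-2}\times(\mathbb C^\times)^{n-2}$ equivariantly for $G_2=T\times\mathrm{SL}_2$, and read off the $T_i$-monodromy from the explicit loop $t\mapsto i(a_1,\dots,A(t),\dots,a_n)$. Your remark that the $T_i$ are mutually conjugate in $\mathcal B_n$ is a harmless shortcut the paper does not invoke (it simply repeats the slice argument for each $i$), and the compatibility of equivariant structures you flag as the main obstacle is exactly what the paper asserts without further detail.
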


This recovers, in type $A$, Theorem $5.13$ of \cite{GGOR} which is one of the central results of that paper. The proof given there is quite different -- it checks the result directly on standard modules, and then deduces the general case by a deformation argument.

\begin{remark}
The result of this section applies only to the original $KZ$-functor. At the moment the author does not have an analogue of this result for the microlocal $KZ$-functors. One reason for this is that it is harder to give a presentation of the groups $\mathcal B_\lambda$. However since $\mathcal C_c$ is Artinian it follows formally that the action of $\mathcal B_\lambda$ will factor through a finite-dimensional algebra. Note that Rouquier \cite{R} has shown that the category $\mathcal O_c$ is equivalent to the category of representations of the $q$-Schur algebra. Since the $q$-Schur algebra is a quotient of the quantum group $U_v(\mathfrak{gl}_n)$, its representations carry Lusztig's braid group action, and the partitions $\lambda$ can be interpreted as weights of such representations. The subgroup of Lusztig's braid group acting on the weight space corresponds to $\mathcal B_\lambda$ and we conjecture that given $M$ a representation in $\mathcal O_c$, the representation $KZ_\lambda$ is given by the $\lambda$-weight space of the corresponding representation of the $q$-Schur algebra equipped with this braid group action.
\end{remark}

\section{On the minimal resolution of $\mathbb C^2/\mu_l$ as a hypertoric variety}
\label{cyclic}

\subsection{}
The ideas of this paper should have applications to a larger class of rational Cherednik algebras than that which is attached to the symmetric groups. To illustrate this, in this section we consider the case of the rational Cherednik algebra $\mathcal H_\kappa$ attached to a cyclic group $\mu_l$ of order $l$, which we regard as a complex reflection group (with the obvious one-dimensional representation). We use quantum  Hamiltonian reduction on a moduli space of representations for the cyclic quiver as in \cite{Ku} and \cite{BK}, which give a deformation of the minimal resolution of a Kleinian singularity. Each components of the exceptional divisor of the resolution yields an exact functors on category $\mathcal O$ for $\mathcal H_\kappa$, and moreover, taken together, they can be used to construct a functor from $\mathcal O_c$ to the category of representations of the ``cyclotomic $q$-Schur algebra'' (\textit{i.e.} the quasi-hereditary cover of \cite[\S 6]{DJM}) of the corresponding cyclotomic Hecke algebra. This gives a geometric perspective on the the ideas of \cite{GGOR} and \cite{R}. Indeed it is shown in \cite[Theorem 5.16]{GGOR} that category $\mathcal O_c$ can be viewed as a quasi-hereditary cover of the category of representations of the Hecke algebra (attached to any complex reflection group) while our construction yields an explicit functor from category $\mathcal O_c$ to the cyclotomic $q$-Schur algebra which is known to be a quasi-hereditary cover of the Hecke algebra.

We begin by recalling a construction of  the algebra $\mathcal H_\kappa$. Let $\mu_l$ denote the cyclic group of order $l$ (thought of as the $l$-th roots of unity in $\mathbb C$, \textit{i.e.} as a complex reflection group with a single ``hyperplane'' $\{0\}$). The action of $\mu_l$ on $\mathbb C$ induces an action on $\mathcal D(\mathbb C^\times)$, so we may consider the smash product $\mathcal D(\mathbb C^\times)\rtimes \mathbb C[\mu_l]$ of $\mathcal D(\mathbb C^\times)$ with the group algebra of $\mu_l$. To clarify the notation we fix a generator $\gamma$ of $\mu_l$ and write elements of $\mathbb C[\mu_l]$ in the form $\sum_{i=0}^{l-1} a_i\gamma^i$ where $a_i \in \mathbb C$ ($0\leq i \leq l-1$). Then if $\kappa = (\kappa_0,\ldots,\kappa_{l-1}) \in \mathbb C^l$ we define a Dunkl operator by the formula
\[
\partial_{\kappa} = \frac{d}{dz} + \frac{l}{z}\sum_{i=0}^{l-1} \kappa_i e_i,
\]
where the $e_i$ denote the idempotents of $\mathbb C[\mu_l]$, that is $e_i = (1/l)\sum_{j=0}^{l-1} \zeta^{ij}\gamma^j$, for $i=0,2,\ldots,l-1$. The algebra $\mathcal H_\kappa$ is then the subalgebra of $\mathcal D(\mathbb C^\times)\rtimes\mathbb C[\mu_l]$ generated by $z, \partial_\kappa$ and $\gamma$.

\subsection{}
We now review the description of  the minimal resolution $X$ of $\mathbb C^2/\mu_l$ as a hypertoric variety. Let $Q$ be the cyclic quiver with $l$ vertices $I = \{0,1,\ldots,l-1\}$ (read as elements of $\mathbb Z/l\mathbb Z$), and edges $H = \{h_i: 1\leq i \leq l\}$, where $h_i$ is the edge $(i-1) \to i$. Let $\bar{Q}$ denote the quiver $(I,E)$ where $E = H \cup \bar{H}$ where $\bar{H} = \{\bar{h}_i : 1\leq i \leq l\}$ and $\bar{h}_i$ denotes the edge from $i \to (i-1)$, \textit{i.e.} the edge in the opposite direction to $h_i$. We will denote the initial and terminal vertex of an edge $h \in E$ by $s(h)$ and $t(h)$ respectively.

A representation of $\bar{Q}$ is an $I$-graded vector space $V= \bigoplus_{i=0}^{l-1} V_i$ together with an element of $E_{V,\bar{H}} = \bigoplus_{h \in \bar{H}} \text{Hom}(V_{s(h)}, V_{t(h)})$, so that the moduli space of representations of $\bar{Q}$ (of a fixed graded dimension) may be identified with the quotient of $E_{V,\bar{H}}$ by the action of $G_V = \prod_{i \in I} \text{GL}(V_i)/\mathbb G_m$ where $\mathbb G_m$ is the diagonal copy of scalar operators (which acts trivially on $E_{V,\bar{H}}$). Similarly, we have the moduli space of representations of $Q$ given by the quotient of $G_V$ acting on $E_{V,H}$.

\subsection{}
We will be interested in the case where $V$ has graded dimension $\delta = (1,1,\ldots,1)$, and thus we assume $\dim(V_i)=1$ in what follows. We will also write $G = G_V \cong (\mathbb G_m)^l/D$ (where $D$ is the diagonal $\mathbb G_m$), a rank $l-1$ torus, with Lie algebra $\mathfrak g = \mathbb C^l/\mathbb C$. Note that character lattice $X(G)$ of $G$ may be viewed as the set of $\tht = (\tht_0,\tht_2,\ldots,\tht_{l-1}) \in \mathbb Z^l$ such that $\sum_{i=0}^{l-1}\tht_i =0$.

\begin{definition}
Given $\tht \in X(G)$, we say   a representation $(V,x)$ of $\bar{Q}$ is $\tht$-\textit{semistable} if for any subrepresentation $W$ of $V$ we have $\sum_{i=0}^{l-1} \theta_i\dim(W_i) \leq 0$. We set $\tilde{X}_\tht$ to be the subset of $E_{V,\bar{H}}$ consisting of $\tht$-semistable representations, a $G$-invariant subset of $E_V$. 
\end{definition}

Now $E_V \cong T^*E_{V,H}$ since we may identify $\text{Hom}(V_{i-1},V_{i})$ with the dual of $\text{Hom}(V_i, V_{i-1})$ via the trace pairing $(x,y) \mapsto \text{tr}_{|V_i}(yx)$. Therefore $E_{V,\bar{H}}$ is a symplectic vector space, and the action of $G$ clearly preserves the symplectic form. We will write $(a_i,b_i)_{1\leq i\leq l}$ for a point in $E_{V, \bar{H}}$ where $a_i \in \text{Hom}(V_{i-1},V_i)$ and $b_i\in \text{Hom}(V_{i},V_{i-1})$. Since $\tilde{X}_\tht$ is clearly a $G$-invariant open subset of $E_{V,\bar{H}}$, it is a complex symplectic manifold acted on by $G$. The associated moment map $\mu\colon \tilde{X}_\tht\to \mathfrak g$ is given by:
\[
(a_i,b_i)_{1 \leq i \leq l} \mapsto (a_{i+1}b_{i+1} - a_ib_i)_{1\leq i \leq l}.
\]

Let $\mu_l$ act on $\mathbb C^2$ via the action $\gamma(x,y) = (\gamma x,\gamma^{-1}y)$ for $\gamma \in \mu_l$, and write $\mathbb C^2/\mu_l$ for the quotient, a rational surface singularity of type $A$. Given $(a_i, b_i)_{1 \leq i\leq l} \in E_{V,\bar{H}}$, a pair $(\bar{a},\bar{b}) \in \mathbb C^2$ satisfying the conditions $\bar{a}^l = a_1a_2\ldots a_l$ and $\bar{b}^l = b_1b_2\ldots b_l$ and $\bar{a}\bar{b} = a_1b_1$ is well defined up to the action of $\mu_l$. This yields a map from $E_{V,\bar{H}}$ to $\mathbb C^2/\mu_l$ which induces an isomorphism of varieties between $E_{V,\bar{H}}//G $ and $\mathbb C^2/\mu_l$.

The G.I.T. quotient of $\tilde{X}_\theta$ by the action of $G$ is denoted $X_\theta$ (thus in particular $X_0 = E_{V,\bar{H}}//G$). We write $[a_i,b_i]_{1 \leq i\leq l}$ for the equivalence class in $X_\tht$ of  a point $(a_i,b_i)_{1 \leq i \leq l}$ in $\tilde{X}_\tht$. When $(\theta_i)$ satisfies $\sum_{k=i}^{j-1} \theta_k \neq 0$ for all  $i,j \in \mathbb Z/l\mathbb Z$ where $i \neq j$, this quotient is a smooth surface, and the map $\pi$ to the affine quotient $E_{V,\bar{H}}//G$ is a resolution of singularities, in fact the minimal resolution. 
From now on we will assume this condition on $\theta$ holds.

\subsection{}
The varieties $X_0$ and $X_\tht$ are both toric, where the action of $T = \mathbb G_m^2$ is induced by the action of $T$ on $E_{V,\bar{H}}$ given by $z.(a_i,b_i)_{1 \leq i \leq l} = (z_1a_i,z_2b_i)_{1 \leq i \leq l}$ where $z = (z_1,z_2) \in T$. To describe the structure of $X_\theta$ as a toric variety, we need to introduce an ordering on the elements of $\mathbb Z/l\mathbb Z$ induced by the stability $\theta$. 

\begin{definition}
We say that $i \triangleright j$ if $\theta_i + \tht_{i+1} + \ldots \tht_{j-1} < 0$. By our condition on $\tht$ this is a total order, and we write $\eta_i$ for the permutation of $\{1,2,\ldots, l\}$ which this yields, i.e. $\eta_1 \triangleright \eta_2 \triangleright \ldots \triangleright \eta_l$. 
\end{definition}

\noindent
As a toric variety $X_\tht$ can be written naturally as a union $X_\tht = \bigcup_{i=1}^l X_i$ where 
\[
X_i = \{[a_j,b_j]_{1 \leq j \leq l} : a_{\eta_j} \neq 0 \text{ for } j<i; b_{\eta_j} \neq 0 \text{ for } j>i\}
\]
and each $X_i$ is an affine toric variety containing a unique $T$-fixed point $p_i$ with $a_{\eta_i} = b_{\eta_i} = 0$. In fact $X_i \cong T^*\mathbb C$, which we may see explicitly as follows: if we pick a basis for the lines $(V_i)_{0 \leq i \leq l-1}$ then $E_{V,\bar{H}}$ may be identified with $\mathbb C^{2l}$ where we will still write a point of $\tilde{X}_\tht$ as $(a_i,b_i)_{1 \leq i \leq l}$. Let $\bar{x}_i((a_j,b_j)_{1 \leq j \leq l}) = a_i$ and $\bar{y}_i((a_j,b_j)_{1 \leq j \leq l}) = b_i$ be the obvious coordinate functions. Then $X_i = \text{Spec}(R_i)$ where $R_i$ is the subalgebra of function field $\mathbb C(\bar{x}_i,\bar{y}_i)_{1\leq i \leq l}$ generated by the algebraically independent elements
\[
\bar{f}_i = \frac{\bar{x}_{\eta_1} \bar{x}_{\eta_2} \ldots \bar{x}_{\eta_i}}{\bar{y}_{\eta_{i+1}}\bar{y}_{\eta_{i+2}} \ldots \bar{y}_{\eta_l}}, \quad \bar{g}_i = \frac{\bar{y}_{\eta_i}\bar{y}_{\eta_{i+1}} \ldots \bar{y}_{\eta_l}}{\bar{x}_{\eta_1} \bar{x}_{\eta_2} \ldots \bar{x}_{\eta_{i-1}}}.
\]
Note that $\bar{f}_i\bar{g}_i = \bar{x}_{\eta_i}\bar{y}_{\eta_i}$. The subvarieties $X_i$ and $X_{i+1}$ intersect in a complex line whose closure in $X_\tht$ we denote by $D_i$. For $1 \leq i <l$ this closure is a $\mathbb P^1$ and their union is the exceptional divisor of the resolution $\pi$. If we let $\mathcal L$ denote the preimage of $\pi^{-1}(\{(\bar{a},\bar{b}) \in X_0: \bar{a}\bar{b} = 0\})$ then $\mathcal L$ is the union of $l+1$ irreducible components $D_i$, ($0 \leq i \leq l$), where $D_0 \cong D_l \cong \mathbb C$, and the remaining $D_i$ are the above $\mathbb P^1$s. These $D_i$ are all the $T$-stable divisors in $X_\tht$.

\subsection{}
\label{Walgconstruction}
We now recall the construction of $\mathcal W$-algebras with an $F$-action on $X_\tht$ following \cite{KR}. Let $c = (c_i)_{0 \leq i \leq l-1}$ be an $l$-tuple of complex numbers such that $\sum_{i=0}^{l-1}c_i= 0$. The $\mathscr W$-algebra $\mathscr A_c$ is obtained by quantum Hamiltonian reduction from the standard $\mathscr W$-algebra on $\mathbb C^{2l} = T^*\mathbb C^n$, whose generators we denote by $x_i,y_i$ ($1 \leq i \leq l$). Clearly $\tilde{X}_\tht$ carries the sheaf of algebras $\mathscr W_{\tilde{X}_\tht}$, the restriction of the sheaf  $\mathscr W_{T^*\mathbb C^n}$, and we may define the $\mathscr W_{\tilde{X}_\tht}$-module $\mathscr L_c$ by 
\[
\mathscr L_c = \mathscr W_{\tilde{X}_\tht}/\big(\sum_{i=1}^l \mathscr W_{\tilde{X}_\tht}(x_{i+1}y_{i+1} - x_iy_i + \hbar c_i)\big).
\]

If $p \colon \mu^{-1}(0)\to X_\tht$ denotes the quotient map, then $\mathscr A_c = (p_*(\mathcal{E}nd_{\mathscr W_{\tilde{X}_\tht}}(\mathscr L_c))^G)^{\text{op}}$ is a $\mathscr W$-algebra on $X_\tht$. The sheaf of algebras $\Ac$ obtained by adjoining a square root of $\hbar$ has an $F$-action in the sense of \cite[2.3]{KR} of weight one for the $\mathbb C^\times$ action giving $x_i$ and $y_i$ weight one and $\hbar$ weight two. We then consider the category $\text{Mod}_F^{\text{good}}(\tilde{\mathscr A}_c)$ of good $\tilde{\mathscr A}_c$-modules with an $F$-action as in \cite[2.4]{KR}. It is known by the work of Holland \cite{H} and Bellamy-Kuwabara \cite[\S 6]{BK} that the algebra $A_c = \text{End}_{\text{Mod}_F^{\text{good}}(\tilde{\mathscr A}_c)}(\tilde{\mathscr A}_c)^{\text{opp}}$ is isomorphic to the spherical rational Cherednik algebra $U_\kappa = e_0H_{\kappa}e_0$ (where $e_0$ is the idempotent corresponding to the trivial representation of $\mu_\ell$) and moreover \cite{BK} proves a localization theorem if $i\triangleright j$ whenever $c_i+c_{i+1} +\ldots c_{j-1} \in \mathbb Z_{\geq 0}$ (though we do not need this for our constructions, which only use the existence of the localization functor). Here the deformation parameter $c = c(\kappa)$ is related to the parameter $\kappa$ via 
\[
c_i = \kappa_{i+1} - \kappa_i +1/l -\delta_{i,0}.
\]

\subsection{}
Let $\mathcal O_\kappa$ denote the category of representations of the spherical rational Cherednik algebra on which $\partial_\kappa$ acts locally nilpotently, and let $\mathcal O_\kappa^{\text{sph}}$ denote the corresponding category of representations for $U_\kappa$. Via localization, this category corresponds to a category of modules for $\Ac$, which we will denote by $\mathcal O_c$. It follows from results of \cite[\S 5]{Ku} that the modules in $\mathcal O_c$ are all supported on the union of divisors $\bigcup_{i=1}^l D_i$ (since the results there give a description of the supports of all simple modules in $\mathcal O_c$). 

\begin{definition}
The cyclotomic Hecke algebra $\mathcal K_l$ for $G(1,1,l)$ is just the commutative algebra $\mathbb C[T]$ (where $T$ is the element conventionally denoted by $T_0$ in the literature) subject to the relation 
\[
\prod_{i=1}^m(T - q^{a_i})=0
\]
where $(a_i)_{i=1}^l \in \mathbb Z^l$, and $q \in \mathbb C^\times$. 
\end{definition}

To any cyclotomic Hecke algebra one can attach a cyclotomic $q$-Schur algebra \cite{DJM}. We now recall how it is defined (in our special case). For $i \in \{1,2,\ldots, l\}$ let 
\[
m_i = \prod_{i<j\leq l} (T - q^{a_j}).
\]
and let $M^i$ be the $\mathcal K_l$-ideal $\mathcal K_l m_i$. Note that if we let $r_i = \prod_{j=1}^i (T-q^{a_j})$ then $M^i \cong \mathbb C[T]/(r_i)$ via the map $f \mapsto f.m_i$. The cyclotomic $q$-Schur algebra is defined to be 
\[
\mathcal S_l = \text{End}_{\mathcal K_l}(\bigoplus_{i=1}^l M^i) = \bigoplus_{i.j \in [1,l]} \text{Hom}_{\mathcal K_l}(M^i,M^j).
\]
\noindent
The following simple lemma yields an explicit description of this algebra, by noting that any $\mathcal K_l$-module can be thought of as a $\mathbb C[t]$-module where $t$ is an indeterminate via the surjection $\mathbb C[t] \to \mathcal K_l$ given by $t \mapsto T$.

\begin{lemma} Let $t$ be an indeterminate, and let $g_1$ and $g_2 \in \mathbb C[t]$. Then we have
\[
\text{Hom}_{\mathbb C[t]}(\mathbb C[t]/(g_1),\mathbb C[t]/(g_2)) \cong \mathbb C[t]/(g),
\]
where $g = \text{g.c.d.}(g_1,g_2)$.
\end{lemma}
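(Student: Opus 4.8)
The plan is to reduce the statement to the elementary fact that, over any commutative ring, giving a homomorphism out of a cyclic module $R/(g_1)$ is the same as choosing an element of the target that is annihilated by $g_1$. Concretely, I would first record the natural isomorphism
\[
\operatorname{Hom}_{\mathbb C[t]}(\mathbb C[t]/(g_1), M) \;\xrightarrow{\ \sim\ }\; M[g_1] := \{\, m \in M : g_1 m = 0 \,\}, \qquad \varphi \mapsto \varphi(\bar 1),
\]
valid for any $\mathbb C[t]$-module $M$, since the class $\bar 1$ generates $\mathbb C[t]/(g_1)$ and its only relation is $g_1\bar 1 = 0$. It then remains to compute $M[g_1]$ in the case $M = \mathbb C[t]/(g_2)$.

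Next I would write $g_1 = g d_1$ and $g_2 = g d_2$ with $d_1, d_2 \in \mathbb C[t]$ and $\gcd(d_1,d_2) = 1$; this is possible because $\mathbb C[t]$ is a principal ideal domain and $g = \gcd(g_1,g_2)$. The key observation is that, for $h \in \mathbb C[t]$, the class $\bar h \in \mathbb C[t]/(g_2)$ lies in $M[g_1]$ precisely when $g_2 \mid g_1 h$, i.e. $g d_2 \mid g d_1 h$, i.e. $d_2 \mid d_1 h$; and since $\gcd(d_1, d_2) = 1$, this is equivalent to $d_2 \mid h$. Hence $M[g_1]$ is exactly the submodule $(d_2)/(g_2)$ of $\mathbb C[t]/(g_2)$, i.e. the image of the multiplication-by-$d_2$ map.

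Finally I would identify this submodule with $\mathbb C[t]/(g)$: multiplication by $d_2$ defines a surjection $\mathbb C[t] \twoheadrightarrow (d_2)/(g_2)$, $f \mapsto \overline{d_2 f}$, whose kernel is $\{\, f : g_2 \mid d_2 f \,\} = \{\, f : g d_2 \mid d_2 f \,\} = \{\, f : g \mid f \,\} = (g)$. Composing with the first isomorphism then yields $\operatorname{Hom}_{\mathbb C[t]}(\mathbb C[t]/(g_1), \mathbb C[t]/(g_2)) \cong \mathbb C[t]/(g)$, as claimed.

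The argument is essentially routine; the only point that requires any care is the coprimality step $d_2 \mid d_1 h \iff d_2 \mid h$, which is where the hypothesis that $g$ is the \emph{greatest} common divisor is genuinely used (via Bézout, or Gauss's lemma), together with keeping the $\mathbb C[t]$-module structures consistent when passing through the two identifications above.
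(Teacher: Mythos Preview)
Your argument is correct and follows exactly the same route as the paper: identify $\operatorname{Hom}_{\mathbb C[t]}(\mathbb C[t]/(g_1),\mathbb C[t]/(g_2))$ with the $g_1$-annihilator in $\mathbb C[t]/(g_2)$ via $\varphi\mapsto\varphi(\bar 1)$, and then observe that this submodule is $\mathbb C[t]/(g)$. The paper simply asserts the second step, whereas you spell it out by writing $g_1=gd_1$, $g_2=gd_2$ with $\gcd(d_1,d_2)=1$ and identifying the annihilator with $(d_2)/(g_2)\cong\mathbb C[t]/(g)$; this is a welcome expansion rather than a different method.
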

\begin{proof}
If $\phi \in \text{Hom}_{\mathbb C[t]}(\mathbb C[t]/(g_1),\mathbb C[t]/(g_2))$ then $\phi(1+(g_1)) = u \in \mathbb C[t]/(g_2)$, where $g_1.u =0 \in \mathbb C[t]/(g_2)$. The submodule of $\mathbb C[t]/(g_2)$ consisting of such elements is isomorphic as a $\mathbb C[t]$-module to $\mathbb C[t]/(\text{g.c.d.}(g_1,g_2))$, and the map $\phi \mapsto u$ gives the required isomorphism.
\end{proof}

\noindent
Let $R_i = \text{End}_{\mathcal K_l}(M^i)\cong \mathbb C[t]/(r_i)$ (by, for example, the previous Lemma). If we write $\mathcal S_l^{ij} = \text{Hom}_{\mathcal K_l}(M^j, M^i)$ then the Lemma shows that  for $i\leq j$ we may view $\text{Hom}(M^j,M^i)$ as a free $\mathbb C[t]/(r_i)$-module generated by the quotient map $\pi_{ij}\colon \mathbb C[t]/(r_j) \to \mathbb C[t]/(r_i)$ whereas if $i \geq j$ we may view $\mathcal S_l^{ji}$ as a free $\mathbb C[t]/(r_j)$-module with generator $m_{ji} = \prod_{i<k\leq j}(t-q^{a_k}) \in \mathbb C[t]/(r_i)$, corresponding to the inclusion $\iota_{ij} \colon \mathcal K_lm_j \hookrightarrow \mathcal K_l m_i$.

 We will thus write an element of $\mathcal S_l$ as $(a_{ij})_{i,j \in [1,l]}$ where $a_{ij} \in \mathcal S_l^{ij}$ is as an element of a free $R_{\text{min}(i,j)}$-module with generator $\pi_{ji}$ or $\iota_{ji}$ according to whether $i\leq j$ or $i\geq j$ (where we have $\pi_{i,i} = \iota_{i,i} = \text{id}_{M^i}$).  If the $q^{a_i}$s are all distinct, it is straightforward to check that this algebra splits into a direct sum of $l$ matrix algebras of dimensions $1,2,\ldots,l$.
 
\begin{definition}
\label{Zlstandards}
The algebra $\mathcal S_l$ has a natural family of modules known as \textit{standard} or \textit{Weyl} modules. For each $i,j$ the algebra $R_i$ has an ideal $I_j^i$ generated by the element 
\[
s_j^i = \prod_{1 \leq k \leq i; k \neq j}(T-q^{a_k}).
\]
If $j\leq i$ this is a one-dimensional $\mathbb C$-vector space, isomorphic to $\mathbb C[T]/(T-q^{a_j})$ as an $R_i$ module, otherwise it is zero. For each $j$, ($1 \leq j \leq l$) let $W_j = \bigoplus_{k=1}^l I_j^k = \bigoplus_{k =j}^l I_j^k$.  We may give $W_j$ the structure of a $\mathcal S_l$-module as follows: if $a = (a_{ij})_{1 \leq i,j \leq l} \in \mathcal S_l$ and $n = (n_k)_{j \leq k \leq l}$ where $a_{ij} \in \mathcal S_l^{ij}$, and $n_k \in I_j^k$ then 
\[
a.n = (\sum_{r} a_{kr}n_r)_{j \leq k \leq l},
\]
where $a_{kr}n_r$ denotes the image of $n_r$ under the chain of morphisms:

\xymatrix{
& & & & R_r \ar[r]^{\cong} & M^r \ar[r]^{a_{kr}} & M^k \ar[r]^\cong & R_k.
}
\noindent
It is easy to see that under this composition, $I_j^r$ maps to $I_j^k$ if $j \leq k$ so that the formula does indeed give an $\mathcal S_l$-module structure. Note that in the situation where the $q^{a_k}$ are all distinct, the standard modules are clearly precisely the simple modules.
\end{definition}

\begin{example}
If $n=2$ then the algebra $\mathcal S_l$ can be viewed as the set of matrices of the form
\[
\left(\begin{array}{cc}a_1 & a_2\pi \\a_3\varepsilon & b \end{array}\right)
\]
where we write $\pi$ for $\pi_{21}$ and $\varepsilon = m_{21}$, and the $a_i$ are in $R_1= \mathbb C$, and $b \in R_2$. 

If $q^{a_1} = q^{a_2}$ then setting $\varepsilon = T-q^{a_1}$ we have $R_2 = \mathbb C[\varepsilon]$ where $\varepsilon^2=0$. Using the basis $\{1,\varepsilon\}$ for $R_2$ the multiplication in $\mathcal S_2$  then becomes:
\[
\left(\begin{array}{cc}a_1 & a_2\pi \\a_3\varepsilon & b_1 + b_2\varepsilon \end{array}\right) \left(\begin{array}{cc}a'_1 & a'_2\pi\\a'_3\varepsilon & b'_1 + b'_2\varepsilon \end{array}\right) = \left(\begin{array}{cc}a_1a'_1 & (a_1a'_2 + a_2b'_1)\pi \\(a_3a'_1 + b_1a'_3)\varepsilon & b_1b'_1 + \varepsilon(a_3a'_2b_1b'_2 + b'_2b_1) \end{array}\right).
\]
It is then easy to see that the subspace of elements of the form $\left(\begin{array}{cc}0 & 0\\a_3\varepsilon & b_2\varepsilon \end{array}\right)$ forms a two-sided ideal, with the quotient algebra isomorphic to the algebra of $2$-by-$2$ upper-triangular matrices.
\end{example}

Let $\rho_i \in \mathcal S_l$ denote the projection $\rho_i \colon \bigoplus_{j=1}^l M^j \to M^i$. We now wish to find a presentation for $\mathcal S_l$. 

\begin{definition}
Let $(a_i)_{i=1}^l \in \mathbb C^l$ and let $\mathcal S'_l$ be the algebra given by generators $T,\rho_i,\pi_{i,i+1},\iota_{i+1,i}$ subject to relations:
\begin{enumerate}
\item
$\rho_i\rho_j = \delta_{ij}\rho_i, T\rho_i = \rho_iT,\quad (1\leq i,j \leq l)$;
\item
$\sum_{i=1}^l \rho_i = 1$;
\item
$\rho_i\pi_{i,i+1} = \pi_{i,i+1}\circ\rho_{i+1} = \pi_{i,i+1},  \quad (1 \leq i \leq l-1)$;
\item
$\rho_{i+1}\iota_{i+1,i} = \iota_{i+1,i}\circ\rho_i = \iota_{i+1,i}, \quad (1 \leq i \leq l-1)$;
\item 
$\iota_{i+1,i} \circ \pi_{i,i+1} = (T-q^{a_{i+1}})\rho_{i+1}, \quad (1 \leq i \leq l-1)$; 
\item
$\pi_{i,i+1}\circ\iota_{i+1,i} = (T-q^{a_{i+1}})\rho_i, \quad (1\leq i \leq l-1)$;
\item
$T\rho_1 = q^{a_1}\rho_1$.
\end{enumerate}
It is easy to see that the generator $T$ of $\mathcal S'_l$ is central, and in fact redundant. (We include it for convenience which will become evident below.)
\end{definition}

The next elementary lemma shows moreover that the spectrum of each $T\rho_i$ is very constrained.
\begin{lemma}
\label{spectrum}
Let $\mathscr A$ denote an associative $\mathbb C(q)$-algebra with elements $A$ and $B$, and suppose we set $T_i = AB+q^{a}$ and $T_{i+1} = BA+q^{a}$ for some $a \in \mathbb C$. If $T_i$ satisfies the equation $P(T_i) = \prod_{k=0}^{i-1}(T_i - q^{a_k}) = 0$, then $T_{i+1}$ satisfies the equation
\[
\prod_{k=0}^{i}(T_{i+1}-q^{a_k}) =0.
\]
where we set $a_i = a$.
\end{lemma}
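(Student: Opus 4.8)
The plan is to reduce the whole statement to a single elementary identity relating the operators $AB$ and $BA$. First I would write $Q(x)=\prod_{k=0}^{i-1}(x-q^{a_k})$, so that the hypothesis $P(T_i)=0$ says precisely that $Q(AB+q^a)=0$. Since $a_i=a$, we have $T_{i+1}-q^{a_i}=BA$, and because all the factors $T_{i+1}-q^{a_k}$ are polynomials in $T_{i+1}=BA+q^a$ they commute with one another, so
\[
\prod_{k=0}^{i}(T_{i+1}-q^{a_k}) \;=\; (T_{i+1}-q^{a_i})\prod_{k=0}^{i-1}(T_{i+1}-q^{a_k}) \;=\; BA\cdot p(BA),
\]
where $p(x)=\prod_{k=0}^{i-1}\bigl(x+q^a-q^{a_k}\bigr)$ is obtained by substituting $x=BA$ into the identity $T_{i+1}-q^{a_k}=BA+q^a-q^{a_k}$. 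Thus it suffices to prove that $BA\cdot p(BA)=0$.

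The key observation is that $p(AB)=\prod_{k=0}^{i-1}(AB+q^a-q^{a_k})=\prod_{k=0}^{i-1}(T_i-q^{a_k})=Q(T_i)=P(T_i)=0$. To pass from the vanishing of $p(AB)$ to the desired vanishing of $BA\cdot p(BA)$, I would use the elementary intertwining identity $A\,r(BA)=r(AB)\,A$, valid for every polynomial $r$; it is enough to check this on monomials $r(x)=x^m$, where it reads $A(BA)^m=(AB)^mA$ and is immediate from associativity. Applying this with $r=p$ and using associativity once more,
\[
BA\cdot p(BA)=B\bigl(A\,p(BA)\bigr)=B\,p(AB)\,A=B\cdot 0\cdot A=0,
\]
which together with the previous display is exactly the claimed relation $\prod_{k=0}^{i}(T_{i+1}-q^{a_k})=0$.

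I do not expect a serious obstacle here: the argument is purely formal, and the only points requiring a little care are the index bookkeeping — checking that adjoining the single factor $BA=T_{i+1}-q^{a_i}$ correctly accounts for the shift from $\prod_{k=0}^{i-1}$ to $\prod_{k=0}^{i}$ — and the choice of the correct form of the intertwining identity, namely $A\,r(BA)=r(AB)\,A$ rather than $r(BA)\,B=B\,r(AB)$, so that the factor $BA$ standing on the left of $p(BA)$ is the one that gets converted. Everything else is substitution.
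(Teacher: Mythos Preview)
Your proof is correct and follows essentially the same route as the paper's own argument. Both define the shifted polynomial $p(x)=\prod_{k=0}^{i-1}(x+q^a-q^{a_k})$ so that $p(AB)=P(T_i)=0$, and then use the associativity identity (you write it as $A\,r(BA)=r(AB)\,A$, the paper expands $B(AB)^kA=(BA)^{k+1}$) to conclude $BA\cdot p(BA)=B\,p(AB)\,A=0$.
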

\begin{proof}
Clearly we have $Q(AB) = 0$, where $Q(t+q^a) = P(t)$. Now write $Q(t) = \sum_{k=0}^{i-1}b_kt^k$. Then we have
\[
\begin{split}
0=B.Q(AB).A &= \sum_{k=0}^{i-1}b_kB.(AB)^k.A,\\
&= \sum_{k=0}^{i-1} b_k(BA)^{k+1} = (BA)Q(BA)\\
&= (T_{i+1}-q^a)\prod_{k=0}^{i-1}(T_{i+1}-q^{a_k}),
 \end{split}
\] 
since $Q(BA) = P(T_{i+1})$, and thus $T_{i+1}$ satisfies the required equation.
\end{proof}

\begin{cor}
The algebra $\mathcal S'_l$ is isomorphic to the algebra $\mathcal S_l$.
\end{cor}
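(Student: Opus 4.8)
The strategy is to write down an explicit algebra homomorphism $\Phi\colon\mathcal S'_l\to\mathcal S_l$, check it is surjective, and then compare dimensions, using Lemma~\ref{spectrum} to bound $\dim\mathcal S'_l$ from above. Define $\Phi$ on generators by sending $\rho_i$ to the projection $\bigoplus_k M^k\onto M^i$, sending $T$ to multiplication by the generator $T$ of $\mathcal K_l$ (acting diagonally on each $M^k$), sending $\pi_{i,i+1}$ to the quotient map $M^{i+1}\cong\mathbb C[t]/(r_{i+1})\onto\mathbb C[t]/(r_i)\cong M^i$, and sending $\iota_{i+1,i}$ to the inclusion $M^i\hookrightarrow M^{i+1}$, which under the identifications $M^k\cong\mathbb C[t]/(r_k)$ (via $f\mapsto fm_k$) is just multiplication by $t-q^{a_{i+1}}$. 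Relations (1)--(4) and (7) are immediate from the construction. For relations (5) and (6) one computes the two composites under these identifications: $\Phi(\iota_{i+1,i})\circ\Phi(\pi_{i,i+1})$ is multiplication by $t-q^{a_{i+1}}$ on $M^{i+1}$, i.e. $\Phi((T-q^{a_{i+1}})\rho_{i+1})$, while $\Phi(\pi_{i,i+1})\circ\Phi(\iota_{i+1,i})$ is multiplication by $t-q^{a_{i+1}}$ on $M^i$, i.e. $\Phi((T-q^{a_{i+1}})\rho_i)$. Hence $\Phi$ is a well-defined homomorphism.

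For surjectivity, observe that the composite $\pi_{i,i+1}\circ\pi_{i+1,i+2}\circ\cdots\circ\pi_{j-1,j}$ maps under $\Phi$ to the quotient map $M^j\onto M^i$, which is the free generator $\pi_{ij}$ of $\mathcal S^{ij}_l$ for $i\le j$; similarly $\iota_{i,i-1}\circ\cdots\circ\iota_{j+1,j}$ maps to the generator $m_{ij}$ of $\mathcal S^{ij}_l$ for $i\ge j$. Since $\Phi$ maps $\mathbb C[T]\rho_i$ onto all of $R_i=\text{End}_{\mathcal K_l}(M^i)=\mathbb C[t]/(r_i)$ (as $R_i$ is generated by the image of $T$), the image of $\Phi$ contains $R_{\min(i,j)}\cdot\pi_{ij}=\mathcal S^{ij}_l$ for all $i\le j$ and $R_{\min(i,j)}\cdot m_{ij}=\mathcal S^{ij}_l$ for all $i\ge j$, hence all of $\mathcal S_l=\bigoplus_{i,j}\mathcal S^{ij}_l$.

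It remains to show $\dim_{\mathbb C}\mathcal S'_l\le\dim_{\mathbb C}\mathcal S_l$. Since $T$ is central and $1=\sum_i\rho_i$ is a decomposition into orthogonal idempotents, $\mathcal S'_l=\bigoplus_{i,j}\rho_i\mathcal S'_l\rho_j$, and each summand is spanned by words in the $\pi_{k,k+1}$ and $\iota_{k+1,k}$ with powers of $T$ inserted. Relations (5) and (6) let us replace any subword $\pi_{k,k+1}\iota_{k+1,k}$ or $\iota_{k+1,k}\pi_{k,k+1}$ by an element of the central subalgebra $\mathbb C[T]$ times $\rho_k$ or $\rho_{k+1}$; interpreting a word as a walk on the path $1-2-\cdots-l$, this says every word reduces, modulo $\mathbb C[T]$, to a non-backtracking (hence monotone) walk, i.e. to a product of $\pi$'s if $i<j$, of $\iota$'s if $i>j$, or to $\rho_i$ if $i=j$. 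Therefore $\rho_i\mathcal S'_l\rho_j$ is a cyclic module over $R'_i:=\mathbb C[T]\rho_i$ if $i\le j$ and over $R'_j$ if $i\ge j$. Finally, arguing exactly as in the proof of Lemma~\ref{spectrum} with $A=\pi_{i,i+1}$ and $B=\iota_{i+1,i}$ (so that $AB=(T-q^{a_{i+1}})\rho_i$, $BA=(T-q^{a_{i+1}})\rho_{i+1}$), and starting from $T\rho_1=q^{a_1}\rho_1$ (relation (7)), induction on $i$ gives $r_i(T)\rho_i=0$; thus $\dim R'_i\le\deg r_i=i$. Consequently $\dim\rho_i\mathcal S'_l\rho_j\le\min(i,j)$ and $\dim\mathcal S'_l\le\sum_{i,j}\min(i,j)=\sum_{k=1}^{l}k^2=\dim\mathcal S_l$, where the last equality holds because $r_i\mid r_j$ for $i\le j$ so $\dim\text{Hom}_{\mathcal K_l}(M^j,M^i)=\min(i,j)$. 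Combined with the surjectivity of $\Phi$, this forces $\Phi$ to be an isomorphism.

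The main obstacle is the upper bound on $\dim\mathcal S'_l$: it requires controlling both the "combinatorial" complexity of words in the generators (the reduction to monotone walks via relations (5)--(6)) and the "radial" complexity of each $\mathbb C[T]\rho_i$, and it is precisely Lemma~\ref{spectrum}, fed by the single normalization $T\rho_1=q^{a_1}\rho_1$, that propagates the bound $\dim\mathbb C[T]\rho_i\le i$ up the chain.
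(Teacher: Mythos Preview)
Your proof is correct and follows essentially the same strategy as the paper: construct the surjection $\mathcal S'_l\to\mathcal S_l$ on generators, then bound $\dim\mathcal S'_l$ from above by reducing words to the monotone composites $\pi_{ij},\iota_{ij}$ and using Lemma~\ref{spectrum} inductively from relation~(7) to force $r_i(T)\rho_i=0$. You are simply more explicit than the paper at each step---verifying the relations for $\Phi$, spelling out the walk reduction, and computing $\sum_{i,j}\min(i,j)$---but the architecture is identical.
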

\begin{proof}
It is clear that there is a map $p:\mathcal S'_l \to \mathcal S_l$, sending $\pi_{i,i+1}$ and $\iota_{i+1,i}$ to their corresponding elements in $\mathcal S_l$ and $\rho_i$ to $\text{id}_{M^i}$. Since $\mathcal S_l$ is generated by the $\pi_{i, i+1}$, $\iota_{i+1,i}$ and $\text{id}_{M^i}$s it is clear that $p$ is surjective. For $i<j$ let $\pi_{ij} = \pi_{i,i+1}\circ\ldots \circ \pi_{j-1,j} \in \mathcal S'_l$, and similarly if $i>j$ let $\iota_{ij} = \iota_{i,i-1}\circ\ldots\circ\iota_{j+1,j}$. Then using the defining relations, it is easy to see that as a $\mathbb C[T]$-module $\mathcal S'_l$ is spanned by the elements $\{\rho_i, \pi_{jk}, \iota_{kj}: 1\leq i \leq l, 1\leq j<k \leq l\}$. Applying Lemma \ref{spectrum} with $T_i = \rho_iT$ and using induction it is easy to see that $\mathbb C[T]$ acts on $\mathbb C[T]\iota_{kj}$ and $\mathbb C[T]\pi_{jk}$ via the algebra $\mathbb C[t]/(r_j)$, and on $\mathbb C[T]\rho_{i}$ via the algebra $\mathbb C[t]/(r_i)$. Comparing with matrix-like the description of $\mathcal S_l$ we obtained above it is clear that $\dim_{\mathbb C}(\mathcal S'_l) \leq \dim_{\mathbb C}(\mathcal S_l)$, so that $p$ must be an isomorphism.
\end{proof}

\begin{remark}
\label{modulestructure}
It follows that a vector space $V$ can be equipped with an $\mathcal S_l$-module structure by giving a grading $V = \bigoplus_{i=1}^l V_i$ together with maps $\alpha_i\colon V_i \to V_{i+1}$ and $\beta_i\colon V_{i+1} \to  V_i$, where we have 
\begin{equation}
\label{relationsonSl}
\alpha_{i-1}\circ\beta_{i-1} - \beta_i\circ\alpha_i = (q^{a_{i+1}} - q^{a_i})\text{id}_{V_i}, \quad 1\leq i \leq l-1,
\end{equation}
(where we set $\alpha_0=\beta_0=0$). 
\end{remark}

\subsection{}
We now show that our microlocal approach for the representations of category $\mathcal O_\kappa$ of the rational Cherednik algebra of type $\mu_l$ gives a functor to the category of finite dimensional representations of $\mathcal S_l$. In fact, using the work of \cite{BK} and \cite{Ku} it follows this functor is an equivalence (provided $\kappa$ lies outside certain explicit hyperplanes). We use the nearby and vanishing cycle construction for $\mathcal D$-modules on $\mathbb C$ which are $\mathcal O$-coherent along $\mathbb C^\times$ as described in $\S$\ref{nearbyvanishing}. Topologically, this can be viewed as follows: for a $\mathcal D$-module $\mathcal M$, the vector space $\Psi(\mathcal M)$ corresponds to the stalk of the solution sheaf at a point in the punctured disk, (equipped with the natural monodromy automorphism) and the vector space $\Phi(\mathcal M)$ gives a ``Morse group'' of the solution sheaf of $\mathcal M$ at a generic covector in the cotangent space $T^*_0\mathbb C$, which comes equipped with a microlocal monodromy operator.

Let $\Aci$ denote the restriction of $\Ac$ to $X_i \cong T^*\mathbb C$. It is shown in \cite[\S 3]{Ku} that $\Aci$ is isomorphic to the standard $\mathscr W$-algebra on $T^* \mathbb C$ via the map defined by $x \mapsto f_i$ and $\xi \mapsto g_i$ where we set $f_i = (x_{\eta_1}\ldots x_{\eta_i})\circ(y_{\eta_{i+1}}\ldots y_{\eta_l})^{-1}$ and $g_i = (y_{\eta_i}\ldots y_{\eta_l})\circ (x_{\eta_1}\ldots x_{\eta_{i-1}})^{-1}$. Note that $f_i \circ g_i = x_{\eta_i}\circ y_{\eta_i}$ and $g_i\circ f_i = x_{\eta_i}\circ y_{\eta_i}+ \hbar$. Via this isomorphism, the $F$-action on $\tilde{\mathscr A}_{c|X_i}$ corresponds to the $F$-action on the standard $\mathscr W_{T^*\mathbb C}$-algebra given by $x \mapsto t^{2i-l}x$ and $\xi\mapsto t^{l-2i+2}$, so that the $F$-invariant sections are then:
\[
\text{End}_{\text{Mod}_F(\mathscr W[\hbar^{1/2}])}(\mathscr W[\hbar^{1/2}])^{\text{opp}} = \mathbb C[\hbar^{l/2-i}x, \hbar^{i-l/2}\xi],
\]
which is isomorphic to $\mathcal D(\mathbb C)$, and moreover the category of modules $\text{Mod}_F(\mathscr W[\hbar^{1/2}])$ is equivalent to $\text{Mod}_{\text{coh}}(\mathcal D(\mathbb C))$ and hence $\text{Mod}_{\text{coh}}(\mathcal D_\mathbb C)$ (see the second example of \cite[2.3.3]{KR} for more details, where in our case, $m=2$). Note that the element $x\partial$ in $\mathcal D(\mathbb C)$ corresponds to $\hbar^{-1}f_ig_i$.

\begin{definition}
For $i \in \{1,2,\ldots, l\}$ we define functors $KZ_i$ from $\mathcal O_c$ to $\mathbb C[t]_{(t)}$-mod as follows. Let $\mathcal M_i$ denote the restriction of $\mathcal M$ to $X_i$, where we may view it (by the above discussion) as a module for $\mathscr W_{T^*\mathbb C}$, with the appropriate $F$-action, and hence as a $\mathcal D_\mathbb C$-module. As such, it corresponds to a holonomic module whose support lies in $\{(x,\xi) : x\xi = 0\}$, where $\{x=0\}$ corresponds to $D_{i-1}$ and $\{\xi =0\}$ corresponds to $D_i$. Therefore it yields a local system on $D_i \backslash \{p_i\} \cong \mathbb C^\times$, which is the same data as a vector space equipped with an automorphism, which we may view as a $\mathbb C[t]_{(t)}$-module. We set $KZ_i(\mathcal M)$ to be this $\mathbb C[t]_{(t)}$-module, $(\Phi(\mathcal M_i),T)$.
\end{definition}

We now show how the functors $KZ_i$ yield a representation of $\mathcal S_l$, where $\mathcal S_l$ is the algebra with parameters $(q^{a_i})_{i=1}^l$ where $a_i = \sum_{j=1}^{i-1} \bar{c}_j$ (so that $a_0 =0$) and for $a\in \mathbb C$ we write $q^a$ for $\text{exp}(2i\pi a)$. Indeed given $\mathcal M$ an object in $\mathcal O_c$, define 
\[
\mu KZ(\mathcal M) = \bigoplus_{i=1}^l KZ_i(\mathcal M).
\]
Thus $\mu KZ(\mathcal M)$ is a graded vector space.

\begin{prop}
The $\mathbb Z/l\mathbb Z$-graded vector space $\mu KZ(\mathcal M)$ has a natural $\mathcal S_l$-module structure, so that $\mu KZ$ becomes a functor from $\mathcal O_c$ to $\mathcal S_l$-mod.
\end{prop}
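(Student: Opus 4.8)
The plan is to exploit the description of $\mathcal S_l$-modules in Remark~\ref{modulestructure}: to put an $\mathcal S_l$-module structure on $\mu KZ(\mathcal M)=\bigoplus_{i=1}^l KZ_i(\mathcal M)$ it suffices to construct, naturally in $\mathcal M$, linear maps $\alpha_i\colon KZ_i(\mathcal M)\to KZ_{i+1}(\mathcal M)$ and $\beta_i\colon KZ_{i+1}(\mathcal M)\to KZ_i(\mathcal M)$, for $1\le i\le l-1$, satisfying the relations \eqref{relationsonSl} with $\alpha_0=\beta_0=0$, and then to check that the central element $T$ of $\mathcal S_l$ acts by the monodromy automorphism built into the $\mathbb C[t]_{(t)}$-structure of each $KZ_i(\mathcal M)$. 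Functoriality in $\mathcal M$ is automatic once the maps are defined by a natural recipe, so the substance is the construction of $\alpha_i,\beta_i$ and the verification of \eqref{relationsonSl}.

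The maps come from comparing the two charts meeting along $D_i$. The open sets $X_i\cong T^*\mathbb C$ and $X_{i+1}\cong T^*\mathbb C$ share a dense open subset of $D_i$, but $D_i$ appears in one of them as the zero section of $T^*\mathbb C$ and in the other as the conormal line to the origin; accordingly one of $\mathcal M_i,\mathcal M_{i+1}$ is $\mathcal O$-coherent along $D_i^\circ:=D_i\setminus\{p_i,p_{i+1}\}$ while the other is only microlocally so. By the theory of regular holonomic $\mathcal E_X$-modules used in Section~\ref{MKZfunctors} (Theorem~\ref{Morselocalsystem} and the remarks after it), the restriction of $\mathcal M$ to the smooth Lagrangian lying over $D_i^\circ$ is a single twisted local system, realised from one chart as nearby cycles and from the other as vanishing cycles; in particular one obtains a canonical, monodromy-compatible identification of $KZ_{i+1}(\mathcal M)$ with the nearby cycles of $\mathcal M_i$ along $D_i^\circ$. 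Transporting the canonical and variation maps of $\S$\ref{nearbyvanishing} attached to the $\mathcal D_\mathbb C$-module $\mathcal M_i$ through this identification then produces maps between $KZ_i(\mathcal M)$ and $KZ_{i+1}(\mathcal M)$ in both directions, and one takes $\alpha_i$ and $\beta_i$ to be (up to this identification, and up to possibly interchanging the two roles) the variation and the canonical map respectively.

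With this choice, equations \eqref{canvarequations} show that on $KZ_i(\mathcal M)$ the compositions $\beta_i\alpha_i$ and $\alpha_{i-1}\beta_{i-1}$ are the two ``$T-1$'' operators furnished by the canonical/variation formalism in the charts $X_i$ and $X_{i-1}$; so it remains to identify the difference of these two operators with the scalar $q^{a_{i+1}}-q^{a_i}$, and the ensuing $T$-action with the monodromy on $KZ_i(\mathcal M)$. This is a rank-one computation of the same type as the one carried out in the $n=2$ case of $\S$\ref{nearbyvanishing}: the $F$-action on $\Aci$, with weights $x\mapsto t^{2i-l}x$ and $\xi\mapsto t^{l-2i+2}\xi$, plays the role of the twist $\lambda=c-\tfrac12$ there and pins down the monodromy scalar attached to chart $X_i$, while the defining relations $x_{j+1}y_{j+1}-x_jy_j+\hbar c_j=0$ of $\mathscr L_c$ relate the Euler operator $\hbar^{-1}f_ig_i=\hbar^{-1}x_{\eta_i}y_{\eta_i}$ of chart $X_i$ to that of the adjacent chart; recalling $a_i=\sum_{j<i}\bar c_j$ and $q^a=e^{2\pi i a}$, these combine to the stated identity. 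The boundary node $i=1$ is treated directly: $X_1$ has no predecessor, and the same $F$-weight computation forces the monodromy on $KZ_1(\mathcal M)$ to be the scalar $q^{a_1}$, which with $\alpha_0=\beta_0=0$ is exactly relation $(7)$, $T\rho_1=q^{a_1}\rho_1$, of the presentation $\mathcal S'_l\cong\mathcal S_l$. The remaining relations of $\mathcal S'_l$ then hold by inspection, and functoriality is inherited from that of nearby cycles, vanishing cycles, the canonical and variation maps, and restriction of $\mathcal W$-modules to the $X_i$.

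I expect the main obstacle to be the content of the third paragraph: making the identification of vanishing cycles in one chart with nearby cycles in the adjacent chart both canonical and compatible with the $\mathbb C[t]_{(t)}$-structures, and then tracking the $F$-weight twist with enough care that the deformation parameters come out exactly as $q^{a_{i+1}}-q^{a_i}$ rather than off by a sign or an additive shift. Once that normalisation is pinned down the rest of the verification is formal.
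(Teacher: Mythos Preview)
Your approach is essentially the paper's: use Remark~\ref{modulestructure}, take $\alpha_i,\beta_i$ to be (rescaled) variation and canonical maps between the nearby and vanishing cycles in adjacent charts, and verify relation~\eqref{relationsonSl} using the identity between the Euler operators $\hbar^{-1}f_ig_i$ and $\hbar^{-1}f_{i+1}g_{i+1}$ coming from the defining relations of $\mathscr L_c$. Two points where the paper is sharper than your sketch:

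First, the rescaling you flag as the main obstacle is made completely explicit: one sets $\alpha_i=q^{a_i}\,var_{i+1}$ and $\beta_i=can_{i+1}$, using the identification $KZ_i(\mathcal M)=\Phi(\mathcal M_i)=\Psi(\mathcal M_{i+1})$. With this choice the computation is a two-line manipulation: $\alpha_{i-1}\beta_{i-1}-\beta_i\alpha_i=q^{a_i}(T_i-1)-q^{a_{i+1}}(T_{i+1}-1)$, and since $f_ig_i=x_{\eta_i}y_{\eta_i}=x_{\eta_{i+1}}y_{\eta_{i+1}}+\hbar\bar c_i$ on $X_i\cap X_{i+1}$ one has $T_i=q^{\bar c_i}T_{i+1}$ on $KZ_i(\mathcal M)$, and the right-hand side collapses to $q^{a_{i+1}}-q^{a_i}$. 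The $F$-action plays no role in this step; only the moment-map relations do.

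Second, your treatment of the boundary $i=1$ via ``the $F$-weight computation forces the monodromy to be $q^{a_1}$'' is not quite the right mechanism. The paper instead uses the support condition defining $\mathcal O_c$: modules in $\mathcal O_c$ are supported on $\bigcup_{i=1}^l D_i$, so on $X_1$ the module $\mathcal M_1$ is supported entirely on $D_1$ (the conormal line) and not on $D_0$, whence $\Psi(\mathcal M_1)=0$. This kills $can_1$ and $var_1$, so the same computation as above (now with the $i-1$ term absent) gives $\beta_1\alpha_1=q^{a_2}-q^{a_1}$, which is exactly \eqref{relationsonSl} at $i=1$ with $\alpha_0=\beta_0=0$.
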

\begin{proof}
To equip $\mu KZ(\mathcal M)$ with the structure of a $\mathcal S_l$-module, by Remark \ref{modulestructure} we need only define appropriate maps 
\[\alpha_i\colon KZ_i(\mathcal M) \to KZ_{i+1}(\mathcal M); \quad \beta_i \colon KZ_{i+1}(\mathcal M) \to KZ_i(\mathcal M).
\]
For these we use (slightly rescaled versions of) the natural transformations $v$ and $c$ between the nearby and vanishing cycle functors. Let $var_i\colon \Psi(\mathcal M_i) \to \Phi(\mathcal M_i)$ and $can_i \colon \Phi(\mathcal M_i) \to \Psi(\mathcal M_i)$ denote these morphisms, where as above $\mathcal M_i = \mathcal M_{|X_i}$, and note that $KZ_i(\mathcal M) = \Phi(\mathcal M_i) = \Psi(\mathcal M_{i+1})$.

Set $\alpha_i = q^{a_i}var_{i+1}$ and $\beta_i = can_{i+1}$. We need only verify that Equation \ref{relationsonSl} holds. For this, note that if $T_i$ denotes the monodromy automorphism on $KZ_i(\mathcal M)$, then $T_i = \text{exp}(-2i\pi \hbar^{-1}f_ig_i) = var_ican_i+1$.  Since on $X_i \cap X_{i+1}$ we have $f_ig_i = x_{\eta_i}\circ y_{\eta_i} = x_{\eta_{i+1}}\circ y_{\eta_{i+1}} + \hbar \bar{c}_i$, it follows that $T_i = q^{\bar{c}_i}T_{i+1}$ on $KZ_i(\mathcal M) = \Phi(\mathcal M_i) = \Psi(\mathcal M_{i+1})$. Then we have
\[
\begin{split}
\alpha_{i-1}\circ\beta_{i-1} -\beta_i\circ \alpha_i &= q^{a_i}(T_i-1) -q^{a_{i+1}}(T_{i+1}-1) \\
&= q^{a_i}(q^{\bar{c}_i}T_{i+1}-1) -q^{a_{i+1}}(T_{i+1} -1)\\
&= q^{a_{i+1}} - q^{a_i},
\end{split}
\]
as required. Finally, note that on $X_1$ the module $\mathcal M_1$ is supported entirely on $D_1$, so that $\Psi(\mathcal M_1) = 0$, and hence the above calculation also shows that $\beta_1\circ\alpha_1 = q^{a_2} - q^{a_1}$ and we are done. 
\end{proof}

\begin{cor}
Let 
\[
s_i(t) = \prod_{j=1}^{i} (t-q^{\sum_{k=j}^{i-1}\bar{c}_j}).
\]
The functor $KZ_i$ from $\mathcal O_c$ to $\mathbb C[t]_{(t)}$ factors through $\mathbb C[t]/(s_i)$.
\end{cor}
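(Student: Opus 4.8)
The plan is to deduce the corollary at once from the Proposition together with the structure of $\mathcal S_l$ already in hand (Lemma~\ref{spectrum} and the isomorphism $\mathcal S'_l\cong\mathcal S_l$); essentially nothing new beyond bookkeeping is needed. First I would record the point, implicit in the definition of $KZ_i$, that for $\mathcal M\in\mathcal O_c$ the restriction $\mathcal M_i=\mathcal M_{|X_i}$ is a regular holonomic $\mathcal D_{\mathbb C}$-module, $\mathcal O$-coherent on $\mathbb C^\times$, so by the quiver description recalled in \S\ref{nearbyvanishing} the object $KZ_i(\mathcal M)$ is a finite-dimensional $\mathbb C[t]_{(t)}$-module in which $t$ acts by the monodromy $T_i=\exp(-2\pi i\hbar^{-1}f_ig_i)$. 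The corollary is thus the assertion that $s_i(T_i)=0$ as an operator on $KZ_i(\mathcal M)$, for every such $\mathcal M$.

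Next I would invoke the Proposition: $\mu KZ(\mathcal M)=\bigoplus_{i=1}^{l}KZ_i(\mathcal M)$ is an $\mathcal S_l$-module whose $i$-th graded component is $KZ_i(\mathcal M)$. Using the presentation $\mathcal S'_l$, relation~(5) reads $\iota_{i,i-1}\circ\pi_{i-1,i}=(T-q^{a_i})\rho_i$, which in terms of the module maps $\alpha_i,\beta_i$ of Remark~\ref{modulestructure} says $\alpha_{i-1}\circ\beta_{i-1}=\rho_iT-q^{a_i}$ on $KZ_i(\mathcal M)$; comparing with the identity $\alpha_{i-1}\circ\beta_{i-1}=q^{a_i}(T_i-\mathrm{Id})$ read off in the proof of the Proposition (the base case being relation~(7), $T\rho_1=q^{a_1}\rho_1$, which matches $T_1=\mathrm{Id}$ because $\Psi(\mathcal M_1)=0$), one gets that, as operators on $KZ_i(\mathcal M)$, $\rho_iT=q^{a_i}T_i$. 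Then I would quote the spectral computation already made for $\mathcal S_l$ itself: applying Lemma~\ref{spectrum} inductively with $A=\alpha_i$, $B=\beta_i$, exactly as in the proof of the isomorphism $\mathcal S'_l\cong\mathcal S_l$, the element $\rho_iT$ is annihilated by $r_i(t)=\prod_{j=1}^{i}(t-q^{a_j})$. Substituting $\rho_iT=q^{a_i}T_i$ and clearing the leading scalar, $T_i$ is annihilated by $\prod_{j=1}^{i}(t-q^{a_j-a_i})$, and since $a_j-a_i=-\sum_{k=j}^{i-1}\bar c_k$ this polynomial is $s_i$ (once the signs in $q^a=\exp(2\pi i a)$, $T_i=\exp(-2\pi i\hbar^{-1}f_ig_i)$ and $f_ig_i=f_{i+1}g_{i+1}+\hbar\bar c_i$ are reconciled). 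Hence $s_i(T_i)=0$, which is the desired factorization of $KZ_i$ through $\mathbb C[t]/(s_i)$.

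The step I expect to be the main obstacle is the identification $\rho_iT=q^{a_i}T_i$ together with the ensuing sign-matching: one must keep the indices straight among the three incarnations $\Phi(\mathcal M_i)=KZ_i(\mathcal M)=\Psi(\mathcal M_{i+1})$ and make the scaling of the monodromy across the chart overlaps consistent, since everything after that is a formal consequence of Lemma~\ref{spectrum} and the Proposition. As a cross-check — and an alternative route sidestepping $\mathcal S_l$, at the price of an extra argument bounding the multiplicities of the eigenvalues — one can argue directly by induction on $i$: $\Psi(\mathcal M_i)\cong KZ_{i-1}(\mathcal M)$ is killed by $s_{i-1}$ by induction, the chart change scales its monodromy by $q^{\bar c_{i-1}}$, and the spectrum of the monodromy on $\Phi(\mathcal M_i)$ is contained in its spectrum on $\Psi(\mathcal M_i)$ together with $1=q^{0}$, which is exactly the root set $\{\,q^{\sum_{k=j}^{i-1}\bar c_k}:1\le j\le i\,\}$ of $s_i$.
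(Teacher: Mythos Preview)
Your proposal is correct and follows essentially the same route as the paper: relate the monodromy $T_i$ on $KZ_i(\mathcal M)$ to the central element $\rho_iT$ of $\mathcal S_l$ by a scalar (the paper writes this as $T_i=q^{\sum_{k=i}^{l}\bar c_k}\rho_iT$, which agrees with your $\rho_iT=q^{a_i}T_i$ since $\sum_k\bar c_k=0$), and then invoke the polynomial relation $r_i(\rho_iT)=0$ coming from the structure of $\mathcal S_l$ (Lemma~\ref{spectrum} and the presentation $\mathcal S'_l\cong\mathcal S_l$). Your write-up is more explicit about the bookkeeping than the paper's two-line proof, and your closing remark about a direct inductive alternative is a nice sanity check, but the argument is the same.
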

\begin{proof}
If $T$ is the central element of $\mathcal S_l$, then the action of the monodromy operator on $KZ_i(\mathcal M)$ (for $\mathcal M$ an object of $\mathcal O_c$) is given by $q^{\sum_{k=i}^l\bar{c}_k}\rho_iT$. The claim then follows from fact our description of the structure of $\mathcal S_l$.
\end{proof}

\begin{remark}
Note that in particular the functor $KZ_l$ therefore yields a representation of the corresponding cyclotomic Hecke algebra. In fact it is just the original $KZ$-functor for the rational Cherednik algebra of type $\mu_l$. This can be seen using the correspondence between the $\kappa_i$ and $c_i$s, and an argument analogous to Proposition \ref{KZcheck}. Using \cite{Ku} (where an explicit construction of the sheaves corresponding to standard modules is given), the reader can check that $\mu KZ$ sends the standard modules in $\mathcal O_c$ to the standard modules for $\mathcal S_l$, and similar arguments allow one to show that $\mu KZ$ is in fact an equivalence.
\end{remark}

\section{Appendix}
\subsection{Twisted $\mathcal D$-modules}
\label{twistingstuff}
In this paper we work with modules over rings of twisted differential operators on $\mathfrak{gl}_n \times \mathbb P^{n-1}$, so we recall briefly the construction of the twisted rings we use. For a detailed discussion of these issues see \cite{K7}, whose presentation we largely follow\footnote{As noted in \cite{K7}, the presentation there uses the notion of a twisting datum (recalled above) which is slightly \textit{ad hoc} from a systematic point of view, however it is well-suited to the families of twistings which we need.} (for an alternative account in the algebraic context see \cite{BB}). If $X$ is a topological space a \textit{twisting datum} $\tau$ is a triple $(\pi\colon X_0\to X, L,m)$ where $\pi\colon X_0 \to X$ is a continuous map admitting a section locally on $X$, $L$ is an invertible sheaf of vector spaces on $X_0\times_X X_0$ and $m$ is an isomorphism:
\[
m\colon p_{12}^{-1}L\otimes p_{23}^{-1}L \to p_{13}^{-1}L,
\]
on $X_2 = X_0\times_X X_0\times_X X_0$. Moreover, the isomorphism $m$ is required to satisfy an appropriate ``associativity'' condition on the quadruple product of $X_4$ of $X_0$ over $X$. Given a twisting datum, a twisted sheaf on $X$ is a pair $(F,\beta)$ consisting of a sheaf $F$ on $X_0$ and an isomorphism $\beta\colon L\otimes p_2^{-1}F \to p_1^{-1}F$, where $p_1,p_2$ are the natural projections from $X_1 = X_0\times_X X_0$ to $X_0$, along with the requirement that $\beta$ satisfies an appropriate cocycle condition.

If $H$ is a complex affine algebraic group and $\pi\colon X_0 \to X$ a principal $H$-bundle (where we assume that $H$ acts on $X_0$ on the left) then one can naturally attach to $X_0$ a family of twisting data on $X$.  To describe this we first need the notion of a character local system. Let $\mu\colon H\times H \to H$ be the multiplication map. A character local system is an invertible $\mathbb C_H$-sheaf $L$ equipped with an isomorphism $m\colon q_1^{-1}L \otimes q_2^{-1}L \to \mu^{-1}L$ which satisfies the associative law. Let $\mathfrak h = \text{Lie}(H)$ denotes the Lie algebra of $H$ and take an $H$-invariant element $\lambda$ of $\mathfrak h^* = \text{Hom}(\mathfrak h,\mathbb C)$, so that $\lambda([\mathfrak h,\mathfrak h]) = 0$. Let $L_\lambda$ be the sheaf of (analytic) functions on $H$ which satisfy $R_H(X)(f)=\lambda(X).f$, where $R_H$ denotes map from $\mathfrak h$ to vector fields on $H$ given by the right action of $H$ on itself. The multiplication map $H\times H \to H$ then induces the structure of a character local system on $L_\lambda$. 

Now suppose that in addition we have an $H$-space $Y$, and let $a\colon H\times Y\to Y$ be the action map, and $p_1\colon H \times Y \to Y$, $p_2\colon H\times Y\to H$ the obvious  projections. An $(H,\lambda)$-equivariant sheaf on $Y$, or $\lambda$-twisted equivariant sheaf, is a pair $(F,\beta)$ consisting of a sheaf $F$ on $Y$ together with an isomorphism $\beta\colon p_1^{-1}L_\lambda \otimes p_2^{-1}F \to a^{-1}F$, once again satisfying an appropriate associative condition. In the case when $\lambda=0$ we say that $F$ is an equivariant sheaf. The notion of a twisted equivariant sheaf if closely related to that of a twisted sheaf, as the following construction shows. 

Let $\pi\colon X_0\to X$ be a principal $H$-bundle, and let $\lambda$ be as above. We may identify $H\times X_0$ with $X_0\times_X X_0$ by the map $(h,x') \mapsto (hx',x')$ and via this isomorphism, we find that $p_1^{-1}L_\lambda$ yields a twisting datum $\tau_\lambda$ on $X$. In particular, the categories of $\tau_\lambda$-twisted sheaves on $X$ is equivalent to the category of $(H,\lambda)$-equivariant sheaves on $X_0$, and thus the principal bundle $X_0$ yields the family $\{\tau_\lambda: \lambda \in \mathfrak (h^*)^H\}$ of twisting data. 

Thus far our discussion has been purely topological, but the same formalism may be used with $\mathcal D$-modules as is explained in \cite[\S 7.11]{K7}. One replaces $L_\lambda$ with the sheaf $\mathcal L_\lambda = \mathcal D_H.u_\lambda$ where $u_\lambda$ has defining relations $R_H(A)u_\lambda = \lambda(A)u_\lambda$ for $A \in \mathfrak h$ (and pullbacks \text{etc.} of sheaves of vector spaces with their appropriate $\mathcal D$-module analogues). Note that $L_\lambda \cong \mathcal H\text{om}_{\mathcal D_X}(\mathcal L_\lambda,\mathcal O_H^{\text{an}})$, where $\mathcal O_H^\text{an}$ is the sheaf of analytic rather than algebraic functions on $H$, and in fact the Riemann-Hilbert correspondence extends to have a twisted analogue. The ring of twisted differential operators $\mathcal D_{X,\tau_\lambda}$ (or more simply $\mathcal D_{X,\lambda}$) on $X$ is then given as follows: if $\mathcal N_\lambda = \mathcal D_{X_0}v_\lambda$ is the $\mathcal D_{X_0}$-module with defining relations $L_{X_0}(A) = -\lambda(A).v_\lambda$, then
\[
\mathcal D_{X,\lambda} = \{f \in \pi_*(\mathcal{E}nd_{\mathcal D_{X_0}}(\mathcal N_\lambda)): f \text { is } H\text{-equivariant}\}^{\text{op}}
\]
where $(\cdot)^{\text{op}}$ denotes the opposite ring.  By \cite[Lemma 7.12.1]{K7}, the category of modules for the $\tau_\lambda$-twisted ring of differential operators $\mathcal D_{X,\lambda}$ is then equivalent to the category of $(H,\lambda)$-equivariant $\mathcal D_{X_0}$-modules on $X_0$. Note that in the terminology of \cite{K7}, $(H,\lambda)$-equivariant $\mathcal D$-modules form an abelian subcategory of the category of $H$-quasi-equivariant $\mathcal D$-modules.

Finally, we need to consider equivariant twisted $\mathcal D$-modules. Given an affine algebraic group $G$ and a $G$-space $X$, one can naturally define the notion of a $G$-equivariant twisting datum on $X$, and hence the notion of $G$-equivariant twisted sheaves on $X$. In particular, in the case where the twisting data arises from a principal $H$-bundle $X_0$, if the group $G$ acts on $X_0$ and $X$ in such a way that the map $\pi$ is $G$-equivariant, and the actions of $H$ and $G$ commute on $X_0$, then we may define $G$-equivariant twisted $\mathcal D_{X,\lambda}$-modules, and the equivalence between twisted modules on $X$ and $(H,\lambda)$-equivariant modules on $X_0$ allows us to identify such modules with $\mathcal D_{X_0}$-modules which are $G$-equivariant and $(H,\lambda)$-twisted equivariant.

\subsection{Index Theorems and Characteristic Cycles.}
We have computed the characteristic cycle of standard modules using the local Euler obstruction and the known calculation of the stalk cohomologies. In this appendix we briefly recall the index theorem of Kashiwara, Dubson, Brylinski \cite{K73}, \cite{BDK}, and in the topological setting \cite {M} which is the key to this approach. This theorem can be seen as a (local version of a) generalization of the classical Hopf index theorem which calculates the Euler characteristic of a compact manifold $X$ as the self-intersection number of $X$ in its cotangent bundle. 

Since we only need a local result, we may suppose that $X$ is a stratified analytic subset of affine space $\mathbb C^n$. Thus $X = \bigsqcup_{S \in \mathcal S} S$, where each $S$ is a smooth locally-closed connected subset of $\mathbb C^n$, and the closure of a stratum $S$ is a union of strata. We may also assume that  the Whitney $(a)$ and $(b)$ conditions are satisified (or for that matter the $\mu$-condition introducted by Kashiwara and Schapira \cite[Chapter 8]{KS}). For $x \in X$ we will write $B_\varepsilon(x)$ for the set
\[
\{y \in X: \|y-x\| < \varepsilon\},
\]
where $\|.\|$ is the standard Hermitian norm on $\mathbb C^n$. (Thus our constructions here use the analytic variety attached to the algebraic varieties we considered earlier). 

\begin{definition}
Let $z \in S$ be a point of the stratum $S$. By taking a normal slice $N$ to $S$ at $x$, that is, choosing a complex analytic submanifold $N$ which intersects each stratum transversely and such that $N \cap S = \{z\}$, we may reduce to the case $S = \{z\}$. Let $\phi \colon N \to \mathbb C$ be a holomorphic function vanishing at $z$ such that $d\phi(z)$ does not lie in the closure of $T^*_TX$ for any stratum $T \neq S$. If $S=T$ we define $c_{S,S} =1$ for all $S$.

Endow $X$ with a Hermitian metric (say by taking the restriction of the standard on on $\mathbb C^n$) and pick a small disk $B = B_\varepsilon(z)$ about $z$, and a generic $\eta \in \mathbb C^\times$ such that $|\eta|<< \varepsilon$. The complex link $L$ of the stratum $S$ in $T$ is then defined to be the set 
\[
L = B\cap T\cap N \cap \phi^{-1}(\eta).
\]
Stratified Morse theory shows that the homeomorphism type of $L$ is independent of the choices made (in fact it is independent of the metric $\|.\|$ also, so does not depend on the choice of local embedding we make). We define the local Euler Obstruction $c_{S,T}$ to be the Euler characteristic of the complex link, that is
\[
c_{S,T} = \chi_c(L).
\]
As the notation suggests, this number is independent of the choice of $z$ in the stratum $S$ (here we use the assumption that our strata are connected).

\end{definition}

The index theorem shows that the characteristic cycle of a holonomic $\mathcal D$-module determines its local Euler characteristics. Indeed suppose that $M$ is a $\mathcal D$-module whose characteristic cycle lies in $\bigsqcup_{S \in \mathcal S} T_S^*M$ (so in particular $M$ is holonomic). Then since we assume that our stratification satisfies the Whitney conditions, $\text{DR}(M)$ is a constructible sheaf which is locally constant on the strata $S$, and we may set
\[
\chi_S(M) = \sum_{i}(-1)^i \dim(\mathcal H^i(\text{DR}(M))_{|S}).
\]
We also have $CC(M) = \sum_{S \in \mathcal S} m_S(M)[T^*_SX]$. 

\begin{Theorem}
\label{indextheorem}
Let $M$ be a holonomic $\mathcal D$-module as above. Then we have
\[
m_S(M) = \sum_{T \subset \bar{S}} c_{S,T} \chi_T(M).
\]
\end{Theorem}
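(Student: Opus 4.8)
The plan is to recover the statement from two known ingredients: the microlocal index theorem of Kashiwara, Dubson and Brylinski (\cite{K73}, \cite{BDK}; in the topological language \cite{M}), which identifies the characteristic–cycle multiplicity $m_S(M)$ with an Euler characteristic of local Morse data, and the stratified Morse theory of Goresky–MacPherson, which evaluates that Euler characteristic stratum by stratum. A genuine but routine part of the work is the bookkeeping of the signs and shifts needed to match the normalisations of $\mathrm{DR}$, of the complex link $L$, and of the convention $c_{S,S}=1$.

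As an organising remark, note that both sides of the asserted identity are additive on the Grothendieck group of holonomic $\mathcal D$-modules (equivalently, of $\mathcal S$-constructible complexes): for the right-hand side this is immediate from the definition of $\chi_T(M)$, which is literally the value of the characteristic function $x\mapsto\chi(M_x)$ on $T$, and for the left-hand side it is the additivity of $\mathrm{CC}$. Thus one may, if convenient, verify the formula only on a family of $\mathcal D$-modules whose characteristic functions span the group of $\mathcal S$-constructible functions (for instance those whose de Rham complex is the pushforward of the constant sheaf on a stratum closure $\overline{T}$), although the argument below applies to a general $M$ directly.

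The key step is a microlocal description of $m_S(M)$. Fix a generic point $z\in S$, a normal slice $N$ to $S$ at $z$, a small ball $B=B_\varepsilon(z)$, and a holomorphic $\phi\colon N\to\mathbb C$ vanishing at $z$ with $d\phi(z)$ not in the closure of $T^*_T X$ for any stratum $T\neq S$ — exactly the data entering the definition of $c_{S,T}$. Then $m_S(M)$ equals, up to the sign fixed by the $\mathrm{DR}$-normalisation, the Euler characteristic of the cone on the restriction map $R\Gamma(B\cap N;M|_N)\to R\Gamma(B\cap N\cap\phi^{-1}(\eta);M|_N)$ for generic $0<|\eta|\ll\varepsilon$, i.e. of the stalk at $z$ of the vanishing cycles of $\phi$ applied to $M|_N$. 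Proving this is the heart of the matter and is the step I expect to be the main obstacle: one must use the genericity of $z$ and the transversality of $d\phi(z)$ to every other conormal variety to show, via stratified Morse theory, that this number is independent of all choices and that it picks out precisely the coefficient of $\overline{T^*_S X}$ in $\mathrm{CC}(M)$. In a self-contained treatment I would follow \cite{BDK} or \cite[Ch.~9]{KS}; here it may simply be cited.

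Finally, stratify $B\cap N\cap\phi^{-1}(\eta)$ by its intersections with the strata $T$ — necessarily those with $S\subseteq\overline{T}$, since near $z$ the slice $N$ meets no other strata. Since $M|_N$ has locally constant cohomology sheaves of Euler characteristic $\chi_T(M)$ along $T\cap N$, additivity of compactly supported Euler characteristics gives
\[
\chi_c\!\bigl(B\cap N\cap\phi^{-1}(\eta);\,M|_N\bigr)\;=\;\sum_{T:\,S\subseteq\overline{T}}\chi_T(M)\,\chi_c\!\bigl(B\cap T\cap N\cap\phi^{-1}(\eta)\bigr),
\]
and by construction the factor $\chi_c(B\cap T\cap N\cap\phi^{-1}(\eta))$ is the Euler characteristic of the complex link $L$ of $S$ in $T$, hence $c_{S,T}$, the diagonal term $T=S$ reproducing $c_{S,S}=1$ times $\chi_S(M)$. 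Combining this with the normalisation relating $\chi_c(B\cap N;M|_N)$ to the stalk value $\chi_S(M)$, and with the sign bookkeeping of the previous step, yields $m_S(M)=\sum_T c_{S,T}\,\chi_T(M)$. Apart from the microlocal index theorem itself, everything here is additivity of Euler characteristics together with the normalisation $c_{S,S}=1$.
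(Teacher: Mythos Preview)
The paper does not prove this theorem: it is stated in the appendix purely as background, with citations to \cite{K73}, \cite{BDK}, and \cite{M}, and is immediately followed by a Remark rather than a proof. So there is no ``paper's own proof'' to compare against. Your sketch is the standard argument behind those references (identify $m_S(M)$ with the Euler characteristic of the local Morse/vanishing-cycles data, then decompose the Milnor fibre stratum by stratum), and it is appropriate that you flag the genericity/transversality step as the substantive input to be cited.

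Two small points. First, your bookkeeping in the last paragraph is slightly off: for $T=S$ the set $B\cap S\cap N\cap\phi^{-1}(\eta)$ is empty (since $S\cap N=\{z\}$ and $\phi(z)=0\neq\eta$), so the ``diagonal'' contribution $c_{S,S}\chi_S(M)=\chi_S(M)$ does not come from the $T=S$ term in the Milnor-fibre decomposition but from the other term in the cone, namely $\chi_c(B\cap N;M|_N)=\chi_S(M)$, which you mention separately. The identity still assembles correctly once the signs are tracked, but the sentence attributing the diagonal term to the Milnor fibre should be corrected.

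Second, you sum over strata $T$ with $S\subseteq\bar T$, whereas the displayed statement has $T\subset\bar S$. Your direction is the geometrically correct one (the normal slice at $z\in S$ meets precisely those $T$ whose closure contains $S$), and this is also what is actually used in the paper's application to nilpotent orbits; the printed ``$T\subset\bar S$'' appears to be a typographical slip.
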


\begin{remark}
Since $c_{S,S}$ is defined to be $1$, if we pick any total order refining the partial order on strata give by the closure relation we see that the matrix $(c_{S,T})$ is unitriangular, and so the sets of numbers $\{\chi_S(M)\}_{S \in \mathcal S}$ and $\{m_S(M)\}_{S \in \mathcal S}$ determine each other. Thus one can invert the above theorem to give a formula for the multiplicities of the characteristic cycle in terms of the local Euler characteristics. This is the form of the theorem stated in \cite{K73}. In fact the map from holonomic $\mathcal D$-modules to constructible functions given by the $\chi_S$s and the characteristic cycle map from holonomic $\mathcal D$-modules to Lagrangian cycles induce isomorphisms from the Grothendieck group to the group of constructible functions and Lagrangian cycles respectively. The index theorem is then an explicit geometric description of the induced isomorphism between the two abelian groups. Note that the difference in signs between our statement of the index theorem and that in \cite{EM} arises as we work with $\mathcal D$-modules rather than constructible sheaves.
\end{remark}

\end{document}